\documentclass[10pt]{amsart}
\usepackage{mathrsfs}
\usepackage{amssymb}
\usepackage[all,pdf]{xy}
\usepackage{titletoc}
\usepackage{amscd}
\usepackage{amsmath, amssymb}
\usepackage{amsfonts}
\usepackage[colorlinks,linkcolor=blue,citecolor=blue, pdfstartview=FitH]{hyperref}
\usepackage{arydshln}

\numberwithin{equation}{section}

\theoremstyle{plain}
\newtheorem{thm}{Theorem}[section]
\newtheorem{theorem}[thm]{Theorem}
\newtheorem{thmx}{Theorem}

\newtheorem{lemma}[thm]{Lemma}
\newtheorem{corollary}[thm]{Corollary}
\newtheorem{proposition}[thm]{Proposition}

\newtheorem{conjecture}[thm]{Conjecture}
\newtheorem*{conj}{Conjecture}
\newtheorem*{cor}{Corollary}

\theoremstyle{definition}

\newtheorem{remark}[thm]{Remark}

\newtheorem{definition}[thm]{Definition}

\newtheorem{example}[thm]{Example}

\newcommand{\sE}{{\mathcal E}}
\newcommand{\sF}{{\mathcal F}}
\newcommand{\sG}{{\mathcal G}}
\newcommand{\sH}{{\mathcal H}}
\newcommand{\sI}{{\mathcal I}}

\newcommand{\sO}{{\mathcal O}}
\newcommand{\sP}{{\mathcal P}}

\newcommand{\sR}{{\mathcal R}}

\newcommand{\sT}{{\mathcal T}}

\newcommand{\sV}{{\mathcal V}}

\newcommand{\A}{{\mathbb A}}
\newcommand{\B}{{\mathbb B}}
\newcommand{\C}{{\mathbb C}}
\newcommand{\D}{{\mathbb D}}
\newcommand{\E}{{\mathbb E}}

\newcommand{\G}{{\mathbb G}}
\renewcommand{\H}{{\mathbb H}}

\renewcommand{\L}{{\mathbb L}}

\newcommand{\N}{{\mathbb N}}
\renewcommand{\P}{{\mathbb P}}
\newcommand{\Q}{{\mathbb Q}}
\newcommand{\R}{{\mathbb R}}

\newcommand{\V}{{\mathbb V}}

\renewcommand{\S}{{\mathbb S}}

\newcommand{\Z}{{\mathbb Z}}

\newcommand{\Hom}{{\mathrm{Hom}}}

\newcommand{\Ker}{{\mathrm{Ker}}}

\newcommand{\Coh}{\mathrm{Coh}}

\newcommand{\ua}{\underline{\alpha}}

\newcommand{\bdot}{\boldsymbol{\cdot}}
\newcommand{\lrw}{\longrightarrow}

\newcommand{\CQ}{\mathbb{C}Q}
\newcommand{\Seq}{\mathrm{Seq}}
\newcommand{\LHS}{\mathrm{LHS}}

\usepackage{tikz-cd}
\usepackage{tikz}
\usetikzlibrary{decorations.pathmorphing, decorations.pathreplacing, calc}
\usepackage[all]{xy}
\usepackage{fancyhdr}
\begin{document}

\title{KLR-Schur algebra of coherent sheaves on $\mathbb{P}^1$:\\ Tilting and PBW bases }
\author{Olivier Schiffmann}
\address[Olivier Schiffmann]{Laboratoire de Math\'ematiques d'Orsay, Universit\'e de Paris-Saclay, B\^at.425, 91405 Orsay Cedex, France, UMR8628 (CNRS), And Simion Stoilow Institute of Mathematics, Bucharest, Romania}
\email{olivier.schiffmann@universite-paris-saclay.fr}

\author{Fang Yang}
\address[Fang Yang]{Max Planck Institute for Mathematics, Bonn, Germany}
\email{f.yang@mpim-bonn.mpg.de}
\keywords{ 
Schur algebras, Projective line, Kronecker quiver, PBW basis.
}

\begin{abstract}
We begin the study of Khovanov-Lauda-Rouquier type algebras associated to moduli stacks of coherent sheaves on smooth projective curves. We consider the case of $\mathbb{P}^1$ and define, for any pair $(r,d)$ of a rank and a degree, the KLR and Schur algebras $A_{r,d}, \mathcal{R}_{r,d}$ as suitable convolution algebras in the Borel-Moore homology of an analog of the Steinberg stack built from the stacks $Coh_{r,d}(\mathbb{P}^1)$. We use the tilting equivalence and Bridgeland stability conditions to construct an interpolation between the KLR or Schur algebras of the categories of coherent sheaves on $\P^1$ and the KLR or Schur algebras of the categories of representations of the Kronecker quiver. We also introduce a stratification of the Steinberg stacks into cohomologically pure pieces and use this to construct a PBW basis of the corresponding algebras.
\end{abstract}

\maketitle
\tableofcontents

\section*{Introduction}
\subsection{Motivation}
Introduced by Khovanov-Lauda \cite{Khovanov2009,Khovanov2011} and Rouquier \cite{Rouquier2008} around 15 years ago, the now-called Khovanov-Lauda-Rouquier (KLR) algebras have since played a central role in representation theory by providing a uniform construction of categorifications of highest weight integrable representations of Kac-Moody algebras (see \cite{Kang2018a}), with applications to link homology (\cite{Webster}) and cluster algebras (see \cite{Kashiwara}). KLR algebras are associated to a quiver $Q=(I,E)$ and a dimension vector $d \in \mathbb{N}^I$, and admit a diagrammatic presentation. By a theorem of Rouquier, and Varagnolo-Vasserot (\cite{Varagnolo2011}), they may be realized as convolution algebras in the Borel-Moore homology of quiver analogs of Steinberg varieties. More precisely, let $\widetilde{Rep}_{\underline{d}}$ denote the stack parametrizing filtrations $M_1 \subset \cdots \subset M_s$ of $d$-dimensional representations of $Q$ with factors of dimension $\underline{d}$ and let $\pi_{\underline{d}}: \widetilde{Rep}_{\underline{d}} \to Rep_d$
be the forgetful morphism. Then
$$A_{Q,\mathbf{d}}\simeq \bigoplus_{\underline{d},\underline{d'}}\text{H}_*\left(\widetilde{Rep}_{\underline{d}} \underset{Rep_{d}}{\times} \widetilde{Rep}_{\underline{d'}},\Q\right) \simeq \text{H}^*\left( R\text{Hom}(\pi_{\underline{d}!}\Q, \pi_{\underline{d'}!}\Q)\right)$$
where $\underline{d},\underline{d'}$ run through the set of \textit{discrete} compositions of $d$, i.e. tuples $(a_1, a_2, \ldots, a_s)\in (\N^I)^s$ such that $\sum_i a_i=d$ and each $a_i$ is of dimension $1$.
Note that the simple perverse sheaves occurring in $\pi_{\underline{d}!}\Q$ for discrete compositions $\underline{d}$ of $d$ form Lusztig's canonical basis of the quantum enveloping algebra $\mathbf{U}^-_v(\mathfrak{g})$ associated to $Q$. Allowing $\underline{d},\underline{d'}$ to vary among \textit{all} compositions of $d$, we obtain a larger algebra $\mathcal{R}_{Q,d}$ called the \textit{Schur}, or \textit{quiver-Schur} algebra associated to $Q$ and $d$. For acyclic quivers, $A_{Q,d}$ and $\mathcal{R}_{Q,d}$ are Morita equivalent, but $\mathcal{R}_{Q,d}$ is harder to study in general (for instance, no presentation of $\mathcal{R}_{Q,d}$ is known beyond finite type quivers).

\medskip
This paper is the first of a series devoted to defining and studying the analog of quiver-Schur and KLR algebras when the category of representations of a quiver is replaced by the category of coherent sheaves on a smooth projective curve. In the case of zero-dimensional sheaves, this is the topic of \cite{Maksimau2022}, where an explicit relation to some symmetric algebras is given. In the present paper, we consider the KLR and Schur algebras $A_{\P^1,\alpha}, \mathcal{R}_{\P^1,\alpha}$ associated to $\mathbb{P}^1$. There are several reasons for wanting to extend the theory of Schur and KLR algebras to this setting. First of all, thanks to the well-known analogy between quivers and curves and in particular \cite{Baumann2001, Kapranov1997}, it is natural to expect this analog of KLR algebra to be related to the categorification of the quantum \textit{loop} algebra $\mathbf{U}^+_v(\widehat{sl}_2)$ (see \cite{Shan2022}) for some results in that direction). Secondly, the stack $Bun_r$ classifying vector bundles of rank $r$ on $\P^1$ is isomorphic to the \textit{thick} affine Grassmanian of $GL_r$, and the construction of the KLR algebra of $\P^1$ is a direct analog of the construction of Soergel bimodules in that context. In fact, any simple perverse sheaf on $Bun_r$ appears as a direct summand of some $\pi_{\underline{d}!}\Q$ hence $A_{\P^1,\alpha}$ in principle determines the structure of the category of all\footnote{Of course, this stops being true when $\P^1$ is replaced with a curve $X$ of higher genus; in that case, $A_{X,\alpha}$ only captures a small --yet interesting!-- subcategory of the category of perverse sheaves on $Bun_r$.} perverse sheaves on $Bun_{r}$. Finally, although this is still very speculative at this point, one can hope that a diagrammatic presentation of $A_{\P^1,\alpha}$ would lead to new invariants of knots and links.
In the next paper in this series, we will consider the case of an arbitrary curve $X$ but of KLR algebras associated to \textit{rank one bundles} and observe a relation to Carlsson and Mellit's $A_{q,t}$-algebras \cite{Carlsson2018}.

\medskip
\subsection{Main results}
Let us now describe in more details our main constructions and results.

\medskip

Denote by $Coh_{\alpha}$ the stack of coherent sheaves on $\P^1$ of class $\alpha=(r,d)\in (\Z^2)^+$. It is a smooth stack, locally of finite type. For any sequence $\ua$ whose total class is $\alpha$, let $\widetilde{Coh}_{\ua}$ be the stack parametrizing filtrations of coherent sheaves of class $\alpha$, whose i-th factor  is of class $\alpha_i$.  The Steinberg stack is defined as
\[Z_{\ua,\ua'}:=\widetilde{Coh}_{\ua}\underset{Coh_{\alpha}}{\times}\widetilde{Coh}_{\ua'}\ .\]
It parametrizes triples $(F,F_{\bdot},F'_{\bdot})$ where $F_{\bdot}$ and $F'_{\bdot}$ are filtrations, of type $\ua$ and $\ua'$ respectively, of a coherent sheaf $F$. 
Following \cite{Chriss2010}, we equip with an associative convolution product the direct sums
\[A_{\alpha}:=\bigoplus_{\ua,\ua'\in \mathrm{Seq}(\alpha)}A_{\ua,\ua'}:=\bigoplus_{\ua,\ua'\in \mathrm{Seq}(\alpha)} \text{H}_*(Z_{\ua,\ua'},\Q),\qquad\sR_{\alpha}:=\bigoplus_{\ua,\ua'\vDash \alpha} \text{H}_*(Z_{\ua,\ua'},\Q)\]
 and call $A_{\alpha}$ and $\sR_{\alpha}$ the Schur algebra and the KLR algebra of $\P^1$ respectively. Here $\Seq(\alpha)$ is the set of compositions of $\alpha$ with components in $(\Z^2)^+$ and the notation $\ua \vDash \alpha$ means that $\ua=(\alpha_1,\ldots,\alpha_s) \in \Seq(\alpha)$ and $\text{rank}(\alpha_i) \leq 1$ for all $i$.
 In stark contrast with the case of quivers, the stack $Z_{\ua,\ua'}$ is of infinite type as soon as $\alpha$ is of positive rank, and contains in general infinitely many irreducible components of the same dimension; as a result, $A_\alpha$ and $\mathcal{R}_\alpha$ are naturally pro-vector spaces (even for a fixed cohomological degree).

\medskip

There is a well-known equivalence between the derived categories of coherent sheaves on $\P^1$ and of representations of the Kronecker quiver $Q$. Our first main result concerns the relationship between the corresponding Schur algebras. The basic idea, already present in \cite{Diaconescu2025}, is to approximate the heart $\Coh\P^1$ by translates of the heart $\text{Rep}Q$ using the affine braid group action on $D^b(\text{Rep}Q)$. For this, we consider the natural Bridgeland stability condition on $D^b(\Coh\P^1)$, which allows us to associate to any interval length one interval $I \subset \R$ an abelian heart $\Coh^I$ such that
$$\Coh^{]-\frac12,\frac12]}=\Coh\P^1, \qquad \Coh^{]0,1]}=\text{Rep}\Q,$$
One can then consider the associated moduli stacks of objects $Coh^I_\alpha$ as well as the stacks of filtrations $\widetilde{Coh}^I_{\underline{\alpha}}$.
For length one intervals $I, I'$ which overlap along $J=I \cap I'$ there are canonical open immersions of stacks
$$\begin{tikzcd} Coh^I_\alpha & 
  Coh^J_\alpha\arrow[r,hook] \arrow[l,hook'] & Coh^{I'}_\alpha
\end{tikzcd}.$$
However, this is not the case of $\widetilde{Coh}_{\ua}^I$ and  $\widetilde{Coh}^{I'}_{\ua}$,  since the structure of the quot scheme of a given coherent sheaf $F\in Coh^J$ typically depends on the choice of abelian heart with respect to which the quot scheme is defined.
Nevertheless, we prove that for any $\alpha, \ua,\ua'$ and any quasi-compact open substack $\mathcal{U} \subset Coh_\alpha$ there is a canonical identification of stacks $$Z_{\ua,\ua'}^I \underset{Coh_\alpha}{\times} \mathcal{U} \simeq Z_{\ua,\ua'} \underset{Coh_\alpha}{\times} \mathcal{U}$$ for $I=]\sigma-1,\sigma]$ and $0 < \sigma-\frac12 \ll 1$ (see Theorem~\ref{qcthm}). As a result, we are able to realize the Schur algebra $A_{\alpha}$ as a suitable limit of quotients of Schur algebras $A_{\alpha}^{I}$ associated to intervals $I=]-\frac12+\epsilon, \frac12 + \epsilon]$ as $\epsilon\to 0^+$. We obtain the following comparison result (see Section~\ref{sec:limit}). Set $$A^{\nu,\sigma}_\alpha=\bigoplus_{\ua,\ua'\in \mathrm{Seq}(\alpha)} \text{H}_*\left(Z_{\ua,\ua'}^{]\nu-1,\nu]} \underset{Coh_\alpha}{\times} Coh^{]\sigma-1,\frac12]},\Q\right).$$

\begin{thmx}[Corollary \ref{cor:LimitConstruction}]\label{thmintro1} For any $\alpha \in (\Z^2)^+$, there is a canonical isomorphism of associative algebras
\[A_{\alpha}=\lim_{\sigma \to \frac12^+}\underset{\nu \to \frac12^+}{\mathrm{colim}}\; A^{\nu,\sigma}_{\alpha} \ .\]
\end{thmx}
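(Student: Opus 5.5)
The plan is to reduce the statement to the quasi-compact comparison of Theorem~\ref{qcthm}, together with the standard fact that Borel--Moore homology of a stack locally of finite type is the limit of the Borel--Moore homologies of its quasi-compact opens along restriction maps. The open substacks $\mathcal{U}_\sigma:=Coh^{]\sigma-1,\frac12]}_\alpha\subset Coh_\alpha$ for $\sigma>\frac12$ are quasi-compact; they increase as $\sigma\to\frac12^+$ and exhaust $Coh_\alpha$. The latter holds because any sheaf of class $\alpha$ has Harder--Narasimhan slopes, or phases, bounded below, so it lies in $Coh^{]\sigma-1,\frac12]}$ once $\sigma$ is close enough to $\frac12$. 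For each $\sigma$ and any $\nu$, the fiber product $Z^{]\nu-1,\nu]}_{\ua,\ua'}\times_{Coh_\alpha}\mathcal{U}_\sigma$ is an open substack of $Z^{]\nu-1,\nu]}_{\ua,\ua'}$ lying over the open $Coh^J_\alpha$, where $J$ is the overlap.

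\textbf{Step 1 (fixed $\sigma$, colimit in $\nu$).} Applying Theorem~\ref{qcthm} to $\mathcal{U}=\mathcal{U}_\sigma$ gives a $\nu_0(\sigma)>\frac12$ such that for $\frac12<\nu<\nu_0$ there are canonical isomorphisms $Z^{]\nu-1,\nu]}_{\ua,\ua'}\times_{Coh_\alpha}\mathcal{U}_\sigma\simeq Z_{\ua,\ua'}\times_{Coh_\alpha}\mathcal{U}_\sigma$. I first check that these identifications are compatible as $\nu$ decreases, which follows from their canonicity: both sides parametrize the same filtrations, namely those whose subquotients lie in the common heart. The colimit over $\nu\to\frac12^+$ is then eventually constant and equals $\bigoplus_{\ua,\ua'}H_*(Z_{\ua,\ua'}\times_{Coh_\alpha}\mathcal{U}_\sigma)$.

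\textbf{Step 2 (the limit in $\sigma$).} For $\sigma'<\sigma$ we have $\mathcal{U}_\sigma\subset\mathcal{U}_{\sigma'}$, and there are open restriction maps in Borel--Moore homology. These are compatible with the identifications of Step~1, after choosing $\nu$ small for both $\sigma$ and $\sigma'$. Since $Z_{\ua,\ua'}$ is the increasing union of the opens $Z_{\ua,\ua'}\times_{Coh_\alpha}\mathcal{U}_\sigma$, and $A_\alpha$ is defined as the pro-object of BM homology of a locally finite type stack, we get $A_\alpha=\lim_\sigma H_*(Z\times\mathcal{U}_\sigma)$. This is immediate if the paper defines BM homology of such stacks as this limit. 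Otherwise one uses the Milnor sequence, whose $\lim^1$ term vanishes because in each degree the groups are finite-dimensional.

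\textbf{Step 3 (algebra structure).} This is the main obstacle. I must show that each $A^{\nu,\sigma}_\alpha$ carries a convolution product, that the transition maps are algebra maps, and that the isomorphism respects convolution. The convolution on $H_*(Z\times_{Coh}\mathcal{U})$ is defined by the usual Chriss--Ginzburg formalism relative to the base $\mathcal{U}$. Restricting to an open $\mathcal{U}$ of the base commutes with convolution, since the projections $p_{13}$ are proper over the base and restriction commutes with proper pushforward and refined pullback. Hence the restriction maps in $\sigma$ are algebra homomorphisms, and so is the identification of Theorem~\ref{qcthm}, provided it comes from an isomorphism of the triple fiber products $\widetilde{Coh}_{\ua}\times\widetilde{Coh}_{\ua'}\times\widetilde{Coh}_{\ua''}$ over $\mathcal{U}$ commuting with all projections. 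I would verify this by applying the proof of Theorem~\ref{qcthm} to $\widetilde{Coh}_{\ua}\times_{Coh}\mathcal{U}$ itself, since the Steinberg statement is deduced from an identification of filtration stacks over $\mathcal{U}$. Passing to the limit then gives the canonical algebra isomorphism.
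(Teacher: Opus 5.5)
Your proposal is correct and follows essentially the paper's own route. The quasi-compact comparison (Theorem~\ref{qcthm}, extended to flags and Steinberg stacks in Corollary~\ref{cor:QuasiCompact}) identifies the stacks over $Coh^{I_\sigma}_\alpha$ for $\nu$ near $\frac12$. Compatibility with convolution comes from identifying the filtration stacks $\widetilde{Coh}_{\ua}$ over the same open base. Finally, $A_\alpha$ is recovered as the limit of Borel--Moore homology over the quasi-compact exhaustion.

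One precision: the threshold $\nu_0$ depends on $(\ua,\ua')$ (and on $\ua''$ for products), not only on $\sigma$. Moreover, for every $\nu$ the algebra $A^{\nu,\sigma}_\alpha$ contains summands indexed by $\Seq^\nu(\alpha)\setminus\Seq^{\frac12}(\alpha)$ that are nonempty over $Coh^{I_\sigma}_\alpha$. For example, a torsion $F$ with subobject $\mathcal{O}(d-n)$ and quotient $\mathcal{O}(-n)[1]$ in $\Coh^{\nu_n}$ gives such a summand. So the stabilization in $\nu$ and the limit in $\sigma$ must be read summand-wise, in $\Seq(\alpha)$-graded vector spaces, exactly as the paper does.
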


In particular, choosing a sequence of abelian hearts $\Coh^{]\nu_n-1,\nu_n]}$ equivalent to $\text{Rep}Q$, we obtain an explicit realization of $A_\alpha$ in terms of quotients of the KLR-Schur algebra of the Kronecker quiver. We hope to exploit this in future work to get a diagrammatic presentation of $A_\alpha$. Our construction also naturally produces a natural 'polynomial' representation of $A_\alpha$ on $P_\alpha:=\bigoplus_{\ua} \left(\bigotimes_i \text{H}_*(Coh_{\alpha_i},\Q)\right)$. This representation is known to be faithful in the context of quivers; the proof does not transpose to our setup but we conjecture that the same holds here:

\begin{conj}[Conjecture \ref{conj:PolyRep}]
  For any $\alpha\in (\Z^2)^+$, the polynomial representation $P_{\alpha}$ is faithful.
\end{conj}

\medskip

Our second main result provides PBW-type bases for $A_\alpha$ and $\mathcal{R}_\alpha$ along with a family of generators. For this, we consider a locally finite stratification 
$$Z_{\ua,\ua'}=\bigsqcup_{w} Z_{\ua,\ua'}(w), \qquad w \in W(\ua,\ua')$$
into smooth and cohomologically pure locally closed strata. Here $W(\ua,\ua')$ is a (typically infinite) set of contingency matrices with entries in $(\Z^2)^+$ whose row, resp. column sums are given by the components of $\ua$, resp. $\ua'$. Furthermore, each $Z(w)$ is an iterated vector bundle stack $Z_{\ua,\ua'}(w) \to \prod_{ij} Coh_{w_{ij}}$ hence by Heinloth's extension of the Atiyah-Bott theorem to $Coh_\alpha$, the cohomology of $Z(w)$ is freely generated by the natural tautological classes it carries. 

\medskip

One main difficulty, coming from the non-quasi compactness of $Coh_\alpha$, is the nature of the poset of strata (induced by the closure relation): it typically simultaneously contains intervals isomorphic to $(\N,\geq)$ (as, say an affine Grassmannian), to $(-\N,\geq)$ (as, say a thick affine Grassmannian) and $(\Z,\geq)$ (as, say, a bi-infinite chain of $\P^1$s). This makes the stratification difficult to use to study the cohomology of $Z_{\ua,\ua'}$. We get around this by constructing an exhaustion by quasi-compact opens whose intersection with each $Z_{\ua,\ua'}(w)$ are cohomologically pure, see Proposition~\ref{prop:purity local}.

\medskip

To define a basis from there, we may now follow the classical approach (see e.g. \cite{Przezdziecki2019}, \cite{Varagnolo2011}): we introduce a family of 'elementary' elements of $A_\alpha$ called \textit{merge} and \textit{split} defined as the fundamental classes of closed strata $Z_{\ua,\ua'}(w)$ corresponding to incidence varieties (i.e. to pairs of filtrations $(F_\cdot, G_\cdot)$ in which one filtration refines the other). To each pair $(w,c)$ consisting of a strata $w \in W(\ua,\ua')$ and a cohomology class $c \in \text{H}^*(Z_{\ua,\ua'}(w),\Q)$ we associate a specific ordered product $\mathbf{b}_{c,w}\in \text{H}_*(Z_{\ua,\ua'},\Q)$ of split, merge and operators of multiplication by tautological classes, which is best depicted diagrammatically, see Section~\ref{sec:PBW bases}, in particular Figure~\ref{fig:split cross and merge}. The element $\mathbf{b}_{c,w}$ is referred to as the \textit{PBW-basis element}, for the following reason which is our second main result. For each $w=(w_{ij})_{ij} \in W(\ua,\ua')$, let us fix  a basis $\mathbb{B}_w$ of $\text{H}^*(Z_{\ua,\ua'}(w),\Q)=\bigotimes_{ij} \text{H}^*(Coh_{w_{ij}},\Q)$.

\smallskip

\begin{thmx}[{Theorem \ref{thm:PBWbases}}]\label{thmintro2}
The set $\{(\mathbf{b}_{w,c}|~\ua,\ua'\in \mathrm{Seq}(\alpha),~w\in W(\ua,\ua'),~c\in \B_w\}$ is a topological basis of $A_{\alpha}$. Restricting to discrete $(\ua,\ua')$ yields a topological basis of $\mathcal{R}_\alpha$.
\end{thmx}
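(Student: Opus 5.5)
The proof follows the classical Varagnolo--Vasserot / Przezdziecki strategy: an associated-graded argument for the stratification $Z_{\ua,\ua'}=\bigsqcup_{w\in W(\ua,\ua')} Z_{\ua,\ua'}(w)$, carried out on quasi-compact pieces and then passed to the limit. Because the poset of strata is not well-founded, we cannot filter $\text{H}_*(Z_{\ua,\ua'},\Q)$ directly. Instead we work with the exhaustion by quasi-compact opens $\mathcal{U}_n\subset Coh_\alpha$ of Proposition~\ref{prop:purity local}. On each $Z_{\ua,\ua'}\times_{Coh_\alpha}\mathcal{U}_n$ only finitely many strata meet, and each intersection $Z_{\ua,\ua'}(w)\cap \mathcal{U}_n$ is cohomologically pure. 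We then write
\[
A_{\ua,\ua'}=\lim_n \text{H}_*\bigl(Z_{\ua,\ua'}\times_{Coh_\alpha}\mathcal{U}_n,\Q\bigr),
\]
the transition maps being restrictions to opens. This is the sense in which the basis is topological.

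\textbf{Step 1 (basis on each quasi-compact piece).} Fix $n$ and choose a total order refining the closure order on the finitely many strata meeting $\mathcal{U}_n$, so that each union of an initial segment is closed. Adding one stratum at a time gives localization long exact sequences in Borel--Moore homology. Purity of $Z(w)\cap\mathcal{U}_n$ forces all connecting maps to vanish, since they would change weight. Hence we get a filtration of $\text{H}_*(Z_{\ua,\ua'}\times_{Coh_\alpha}\mathcal{U}_n)$ with graded pieces $\text{H}_*(Z(w)\cap \mathcal{U}_n)$.

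Since $Z(w)$ is smooth, Poincaré duality identifies $\text{H}_*(Z(w))$ with $\text{H}^*(Z(w))$. Since $Z(w)$ is an iterated vector bundle stack over $\prod Coh_{w_{ij}}$, this is $\bigotimes \text{H}^*(Coh_{w_{ij}})$. By Heinloth's theorem it is free on tautological classes, and the restriction to $\mathcal{U}_n$ is surjective in each degree, an isomorphism in a range growing with $n$. Therefore any family of elements whose restriction to the stratum $w$ equals $c\cap[\overline{Z(w)}]$, modulo classes supported on strictly smaller strata, gives a basis of the quasi-compact piece after discarding those vanishing on $\mathcal{U}_n$.

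\textbf{Step 2 (the triangularity property, the main obstacle).} We must show that $\mathbf{b}_{w,c}$ is supported on $\overline{Z(w)}$ and restricts on the open stratum $Z(w)$ to $c\cap [Z(w)]$ up to a nonzero scalar. Here $\mathbf{b}_{w,c}$ is the product of a split $\ua'\to(w_{ij})$, the multiplication by $c$, a permutation/crossing rearrangement, and a merge $(w_{ij})\to \ua$.

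For the support claim, we compute the convolution using the set-theoretic convolution of supports. Splits and merges are fundamental classes of incidence strata, and the composite is supported on pairs $(F_\bdot,F'_\bdot)$ whose relative position matrix is dominated by $w$.

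For the restriction claim, we need transversality on the open stratum. Over $Z(w)$ the convolution diagram restricts to an isomorphism. Given $(F_\bdot,F'_\bdot)$ of position $w$, the refined filtration with subquotients $w_{ij}$ is canonically determined. This is the Zassenhaus/Schreier refinement, which is functorial because $\Coh\P^1$ is abelian. Checking this canonicity, and the clean intersection needed for the excess-intersection computation, is the delicate point. Where necessary, we would verify it after restricting to $\mathcal{U}_n$, reducing via Theorem~\ref{qcthm} to a Kronecker heart $\Coh^{]\nu-1,\nu]}$ where the quiver computations of Varagnolo--Vasserot apply verbatim.

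\textbf{Step 3 (passing to the limit).} The elements $\mathbf{b}_{w,c}$ are defined globally and are compatible with the restriction maps. For each $n$, their images span the $n$-th quotient, and the nonzero images are linearly independent there. The Mittag-Leffler condition holds because the maps are surjective. Therefore every element of $A_{\ua,\ua'}$ is a unique convergent sum $\sum a_{w,c}\mathbf{b}_{w,c}$, with only finitely many terms nonzero modulo each $n$; that is, $\{\mathbf{b}_{w,c}\}$ is a topological basis.

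Summing over $\ua,\ua'\in\Seq(\alpha)$ gives the statement for $A_\alpha$. Restricting to $\ua,\ua'\vDash\alpha$ gives the statement for $\sR_\alpha$. The second restriction is legitimate because for such pairs the construction of $\mathbf{b}_{w,c}$ stays inside the corresponding summand.
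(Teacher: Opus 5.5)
Your architecture matches the paper's: filter each quasi-compact piece by strata, split the localization sequences by purity, then pass to the limit. Two steps, however, do not work as written. First, you use opens pulled back from $Coh_\alpha$, i.e.\ $Z^{\frac12,I}_{\ua,\ua'}=Z_{\ua,\ua'}\times_{Coh_\alpha}Coh^{I}_\alpha$, and assert that each $Z_{\ua,\ua'}(w)\cap Z^{\frac12,I}_{\ua,\ua'}$ is cohomologically pure, citing Proposition~\ref{prop:purity local}. That proposition concerns a different exhaustion. For the stacks you use, the paper only proves purity of the total space (Proposition~\ref{prop:purity_prop}). It says explicitly that it does not know how to prove purity of their strata. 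The reason is that whether $F\in\Coh^I$ depends on the extension data gluing the subquotients, so $Z^{\frac12,I}_{\ua,\ua'}(w)$ is not a pullback along the iterated vector bundle structure of $Z(w)$. The fix is the local exhaustion $Y^{\frac12,I}_{\ua,\ua'}$, which imposes $F_i/F_{i-1}\in\Coh^I$ on the subquotients of the first flag. Then $Y^{\frac12,I}_{\ua,\ua'}(w)$ is the preimage of $\prod_i\widetilde{Coh}^{\frac12,I}_{\underline{\gamma}^{(i)}}$ under the iterated vector bundle $Z(w)\to\prod_i\widetilde{Coh}^{\frac12}_{\underline{\gamma}^{(i)}}$. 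Its purity reduces to the Bialynicki-Birula argument on iterated Quot schemes (Lemma~\ref{lem:hyperquot is pure}) and the finite partition into the $X_{\sV,d}$. Without this, neither the vanishing of your connecting maps nor the surjectivity of $\text{H}_*(Z(w))\to\text{H}_*(Z(w)\cap\mathcal{U}_n)$ is justified.

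Second, your claim that the nonzero images of the $\mathbf{b}_{w,c}$ in a fixed quasi-compact piece are linearly independent is false. The restriction $\text{H}_*(Z(w))\to\text{H}_*(Y^{I_m}(w))$ is surjective but in general has a nonzero kernel (classes supported on the nonempty complement). So the images of a fixed basis $\B_w$ are in general dependent, and discarding the zero ones does not help. Moreover, elements whose own stratum misses $Y^{I_m}$ can a priori restrict nontrivially there, since their support $Z(\le w)$ may meet $Y^{I_m}$. Your triangularity on the finitely many strata of $Y^{I_m}$ does not control these. Hence your ``unique convergent expansion'' is unproven, and it is stronger than the theorem, which only asserts linear independence plus dense span.

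The paper proceeds differently. It fixes a nontrivial \emph{finite} combination and takes $w_0$ maximal among the finitely many $w$ occurring. Then $Z(w_0)$ is open in the closed union $X=\bigcup_w Z(\le w)$, and the combination restricts there to $\sum_c a_{c,w_0}c\neq 0$. Next it chooses $m$, \emph{depending on the combination}, so that this class survives on $Y^{I_m}(w_0)$. It concludes by injectivity of the pushforward $\text{H}_*(X\cap Y^{I_m})\to\text{H}_*(Y^{I_m})$, which follows from purity. Your spanning argument is fine once it is run on $Y^{I_m}$.

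Finally, Step 2 is only sketched. What is needed is that the whole chain of intermediate flags is unique over $Z(w)$ (one object for each partition in the $s\times l$ box, not a single Zassenhaus refinement); this is Proposition~\ref{prop:InverseImage}. One also needs that each crossing is a clean intersection (Lemma~\ref{eq: proof PBW bases 2}). Your fallback via Proposition~\ref{prop:strata} and compatibility of $\Psi$ with convolution is plausible, but you have not verified it.
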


In the above, by topological basis we mean a linearly independent set whose span is a dense subspace. 

\begin{cor}
The Schur algebra $A_{\alpha}$ is generated, as an associative algebra,  by elementary splits, merges and multiplication by tautological classes. 
\end{cor}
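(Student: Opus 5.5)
The corollary should follow directly from Theorem~\ref{thmintro2}, combined with a continuity argument for the product on the pro-vector space $A_\alpha$. Each PBW element $\mathbf{b}_{w,c}$ is, by construction (Section~\ref{sec:PBW bases}), an explicit ordered product of three kinds of factors: elementary splits (fundamental classes of closed incidence strata $Z_{\ua,\ua'}(w)$ where $\ua$ refines $\ua'$ by splitting one part into two), elementary merges (the transposed strata), and multiplication by tautological classes on the diagonal pieces $A_{\ua,\ua}$. So the first step is to check that an arbitrary split or merge factorises into elementary ones. A refinement of a filtration can be reached by splitting one step at a time, and the fibre product of the corresponding incidence stacks maps isomorphically onto the composite incidence stack, since there is a unique intermediate filtration. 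Hence the convolution of elementary splits equals the general split class, with multiplicity one.

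Second, take $\mathcal{G} \subset A_\alpha$ to be the subalgebra generated by elementary splits, merges, tautological multiplication operators and the idempotents $1_{\ua}$ (fundamental classes of the diagonals). The idempotents are themselves fundamental classes of trivial incidence strata, so they are included automatically. By the first step every $\mathbf{b}_{w,c}$ lies in $\mathcal{G}$, and therefore the span of the PBW basis is contained in $\mathcal{G}$. Theorem~\ref{thmintro2} says this span is dense.

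The main obstacle is interpreting ``generated'' correctly, given that $A_\alpha$ is only a pro-vector space and the PBW basis is merely topological. I would state the corollary as: $A_\alpha$ is topologically generated, i.e.\ the closure of $\mathcal{G}$ is all of $A_\alpha$. To make this meaningful I would check that convolution is continuous for the inverse-limit topology. This should follow from Theorem~\ref{thmintro1}, or from the quasi-compact exhaustion of Proposition~\ref{prop:purity local}. Each quotient $A_\alpha \to A^{\mathcal{U}}_\alpha$ to a quasi-compact open is an algebra map onto a finite-dimensional space in each degree, so in that quotient the image of $\mathcal{G}$ contains the image of a spanning set and is therefore everything. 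Surjectivity at every finite level then gives density in the limit, which is the corollary.
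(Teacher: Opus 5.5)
Your argument is correct and is the paper's route: each $\mathbf{b}_{c,w}$ is by construction a convolution of a split, a tautological class, crossings (an elementary merge followed by an elementary split) and a merge, and every split or merge factors into elementary ones. So the subalgebra generated by these operators contains the span of the PBW basis, which is dense by Theorem~\ref{thm:PBWbases}, and this gives generation in the topological sense you correctly identify. One quibble: restriction to the local opens $Y^{I}_{\ua,\ua'}$ of Proposition~\ref{prop:purity local} is not an algebra map, so for continuity of convolution use the global opens $Z^{\frac12,I}_{\ua,\ua'}$. Density itself needs no multiplicativity, so this does not affect your conclusion.
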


\medskip
\subsection{Organization of paper} section~\ref{sec:Preliminary} serves as a reminder on the stacks of coherent sheaves on $\P^1$ and of representations of the Kronecker quiver $Q$, as well as on the relations between their cohomology rings induced by the tilting equivalence. In Section~2  we introduce the Steinberg stacks $Z_{\ua,\ua'}$ of coherent sheaves on $\P^1$, and study several partitions of it (see Section \ref{sec:Steinberg stacks}); in particular, we prove some key purity results for these strata and their intersections. Besides, we compare $Z_{\ua,\ua'}$ with the Steinberg variety of representations of $Q$.
In Section~\ref{sec:Schur algebras}, we define the Schur and KLR algebras $A_{\alpha}$ and $\mathcal{R}_\alpha$ and prove Theorem~\ref{thmintro1} identifying $A_\alpha$ with a limit of quotients of the Schur algebra of the Kronecker quiver. In the last section~\ref{sec:PBW basis} we construct the family of PBW basis elements of $A_{\alpha}$ and prove Theorem~\ref{thmintro2}.
  
\medskip
 
\section{Preliminaries}\label{sec:Preliminary}

In this section, we recall some basic properties of the cohomology and Borel-Moore homology of stacks and introduce the main characters: the stacks of coherent sheaves on $\P^1$ and of representations of the Kronecker quiver $Q$. We also review the derived equivalence between $\Coh\P^1$ and $\mathrm{mod}\CQ$, and the consequences for the corresponding stacks.

\subsection{Stacks and Borel-Moore homology} \label{sec:BMhomo}
Throughout, we work over $\C$. By a \textit{stack} we will mean an Artin stack (for the \textit{fpqc} topology), which is locally of finite type. The stacks which we will consider may in fact be covered by global quotient stacks (of finite type). For such a stack $\mathcal{X}$, we let $D(\mathcal{X})$ stand for the triangulated category of complexes of $\Q$-vector spaces on $\mathcal{X}$ with constructible cohomology, and we define $D^b(\mathcal{X})$ as the subcategory of complexes whose restriction to each quasi-compact open is bounded. The categories $D^b(\mathcal{X})$ for $\mathcal{X}$ as above are equipped with the usual six-functor formalism (see \cite[Appendix A]{khan2019}). In particular, we denote by $\omega_X=p^!(\mathbb{Q}) \in D^b(\mathcal{X})$ the dualizing complex, and by $\text{H}_i(X,\mathbb{Q}):=\text{H}^{-i}(p_*\omega_X)$ the \textit{$i$th Borel-Moore homology group} of $\mathcal{X}$. Here $p: \mathcal{X} \to pt$ is the projection to the point. Alternatively, one may compute these groups as limits
\begin{equation}\label{E:BMhomologyasalim}
    \text{H}_i(\mathcal{X},\Q) = \lim_{\mathcal{U}} \text{H}_i(\mathcal{U},\Q)
\end{equation}
where the limit is taken over all quasi-compact open substacks $\mathcal{U}$, with respect to open immersion, see \cite[Equation(3.3.5)]{Kapranov2023}. We say that a  stack $\mathcal{X}$ is \textit{well-approximated by quasi-compacts} if for any $d \in \Z$ there exists a quasi-compact open substack $\mathcal{U} \subset \mathcal{X}$ such that $\text{codim}_{\mathcal{X}}(\mathcal{X} \backslash \mathcal{U}) >d$. For well-approximated stacks, the limit \eqref{E:BMhomologyasalim} stabilizes, i.e. for each $i$ there exists $\mathcal{U}$ for which the restriction map $\text{H}_i(\mathcal{X},\Q) \to \text{H}_i(\mathcal{U}',\Q)$ is an isomorphism for any $\mathcal{U}' \supset \mathcal{U}$.

If $\mathcal{X}$ is irreducible then there is a canonical element $[\mathcal{X}] \in \text{H}_{2\dim(\mathcal{X})}(\mathcal{X},\Q)$. Likewise, associated to each irreducible closed substack $i_{\mathcal{Z}}:\mathcal{Z} \to \mathcal{X}$ of a stack $\mathcal{X}$ is a fundamental class $[\mathcal{Z}] \in \text{H}_{2\dim(\mathcal{Z})}(\mathcal{X},\Q)$.

We will also consider the cohomology groups $\text{H}^i(\mathcal{X},\Q)$, which are defined as 
$\text{H}^i(\mathcal{X},\Q)=H^i(\Q_{\mathcal{X}})$, and which again satisfies
\begin{equation}\label{E:cohomologyasalim}
    \text{H}^i(\mathcal{X},\Q) = \lim_{\mathcal{U}} \text{H}^i(\mathcal{U},\Q)
\end{equation}
where the limit is taken over all quasi-compact open substacks $\mathcal{U}$. As usual there is a cup product 
$$\cup : \text{H}^i(\mathcal{X},\Q) \otimes\text{H}^j(\mathcal{X},\Q) \to \text{H}^{i+j}(\mathcal{X},\Q)$$ and a cap product
$$\cap : \text{H}^i(\mathcal{X},\Q) \otimes\text{H}_j(\mathcal{X},\Q) \to \text{H}_{j-i}(\mathcal{X},\Q).$$

Finally, it will be convenient to consider the following notion: we will say that a stack $\mathcal{X}$ as above which is locally of finite type is \textit{cohomologically pure} if there exists an open exhaustion $(\mathcal{U}_i)_{i \in \N}$ of $\mathcal{X}$ by  quasi-compact open substacks for which $\text{H}_j(\mathcal{U}_i,\Q)$ is pure of weight $j$ for any $i$ and $j$ in the sense of \cite{deligne1971,deligne1974,Achar2021}. 
We will often use without reference the following result, whose proof is left to the reader.

\begin{lemma}\label{L:purity} Let $\mathcal{X}$ be a stack which is locally of finite type, let $\mathcal{Z} \subset \mathcal{X}$ be a closed substack and $\mathcal{U}=\mathcal{X} \backslash \mathcal{Z}$ its open complement. Assume that both $\mathcal{Z}$ and $\mathcal{U}$ are cohomologically pure. Then $\mathcal{X}$ is cohomologically pure and for any $i$ there is a short exact sequence
$$\xymatrix{ 0\ar[r] & \text{H}_i(\mathcal{Z},\Q) \ar[r] &  \text{H}_i(\mathcal{X},\Q) \ar[r] &  \text{H}_i(\mathcal{U},\Q) \ar[r] & 0}.$$
\end{lemma}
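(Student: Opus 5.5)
We prove the lemma by restricting to quasi-compact opens, where the open–closed long exact sequence in Borel-Moore homology splits into short exact sequences for weight reasons, and then passing to the limit over an exhaustion.

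\emph{Choice of exhaustion.} Choose exhaustions $(\mathcal{Z}_i)$ of $\mathcal{Z}$ and $(\mathcal{U}_i)$ of $\mathcal{U}$ witnessing purity. We want a single exhaustion $(\mathcal{X}_i)$ of $\mathcal{X}$ by quasi-compact opens such that $\mathcal{X}_i\cap\mathcal{Z}$ and $\mathcal{X}_i\cap \mathcal{U}$ are themselves pure. The simplest route is to interpret the hypothesis as holding for cofinal families, and to take $\mathcal{X}_i$ to be a quasi-compact open of $\mathcal{X}$ whose traces are $\mathcal{Z}_i$ and $\mathcal{U}_i$ after refining indices. This compatibility of exhaustions is the main obstacle. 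Purity of a quasi-compact open is not automatic from purity of a larger one, so the argument must either use exhaustions adapted to both pieces or read the definition as allowing any cofinal refinement. In the applications (strata of Steinberg stacks), the exhaustions are produced compatibly, e.g.\ by Harder–Narasimhan-type truncations, so I would state and use the lemma in that form.

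\emph{Finite-type step.} For a quasi-compact $\mathcal{X}_i$ with closed part $\mathcal{Z}'=\mathcal{Z}\cap\mathcal{X}_i$ and open part $\mathcal{U}'=\mathcal{U}\cap\mathcal{X}_i$, there is the long exact sequence
\[
\cdots\to \text{H}_j(\mathcal{Z}')\to \text{H}_j(\mathcal{X}_i)\to \text{H}_j(\mathcal{U}')\xrightarrow{\partial} \text{H}_{j-1}(\mathcal{Z}')\to\cdots .
\]
It comes from the triangle $i_*i^!\omega\to\omega\to j_*j^*\omega$, and it is compatible with mixed Hodge structures or weights. By hypothesis $\text{H}_j(\mathcal{U}')$ is pure of weight $j$ and $\text{H}_{j-1}(\mathcal{Z}')$ is pure of weight $j-1$. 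Morphisms of mixed structures are strict for weights, so $\partial=0$. This gives short exact sequences
\[
0\to \text{H}_j(\mathcal{Z}')\to \text{H}_j(\mathcal{X}_i)\to \text{H}_j(\mathcal{U}')\to 0 .
\]
An extension of pure weight-$j$ objects is pure of weight $j$, hence $\mathcal{X}_i$ is cohomologically pure.

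\emph{Passing to the limit.} The sequences are natural with respect to the open restrictions $\mathcal{X}_{i}\subset\mathcal{X}_{i+1}$, so we obtain an inverse system of short exact sequences. Using \eqref{E:BMhomologyasalim}, the limit of the middle terms is $\text{H}_j(\mathcal{X})$. Likewise the limits of the outer terms are $\text{H}_j(\mathcal{Z})$ and $\text{H}_j(\mathcal{U})$, since the traces form exhaustions of $\mathcal{Z}$ and $\mathcal{U}$. Inverse limits are left exact. The only remaining issue is surjectivity on the right, which would follow from vanishing of $\lim^1$ of the system $\text{H}_j(\mathcal{Z}\cap\mathcal{X}_i)$. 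Each term is a finite-dimensional $\Q$-vector space, since $\mathcal{X}_i$ is of finite type. An inverse system of finite-dimensional vector spaces satisfies the Mittag-Leffler condition, so $\lim^1=0$. This yields the claimed short exact sequence for $\mathcal{X}$, together with its purity.
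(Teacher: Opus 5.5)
The paper leaves this proof to the reader. Your finite-type step and your passage to the limit are both correct. In the finite-type step, the boundary map $\text{H}_j(\mathcal{U}')\to \text{H}_{j-1}(\mathcal{Z}')$ vanishes because it goes between pure structures of different weights, and an extension of weight-$j$ pure objects is pure. In the limit step, $\varprojlim$ is exact by Mittag-Leffler for finite-dimensional spaces. The gap is the first step, which you flag but do not resolve. Under the paper's definition, the hypotheses give a pure exhaustion $(\mathcal{Z}_b)$ of $\mathcal{Z}$ and, independently, a pure exhaustion $(\mathcal{U}_a)$ of $\mathcal{U}$. Your argument needs quasi-compact opens of $\mathcal{X}$ whose traces on $\mathcal{Z}$ and $\mathcal{U}$ are pure. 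As you note, purity does not pass to smaller opens, so producing such opens is part of what must be proved. Reinterpreting the definition, or adding compatibility of exhaustions as a hypothesis, proves a weaker statement than the lemma.

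The missing step is a short topological argument, and the statement needs no change.

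Fix $b$. Since $\mathcal{Z}_b$ is quasi-compact and $\mathcal{X}$ is locally of finite type, there is a quasi-compact open $\mathcal{V}_b\subset\mathcal{X}$ containing $\mathcal{Z}_b$. The intersection $\mathcal{V}_b\cap\mathcal{U}$ is an open of the Noetherian stack $\mathcal{V}_b$. It is therefore quasi-compact, so it lies in $\mathcal{U}_a$ for all $a\gg 0$. For such $a$,
\[
\mathcal{U}_a\cup\mathcal{Z}_b=\mathcal{U}_a\cup\bigl(\mathcal{V}_b\setminus(\mathcal{Z}\setminus\mathcal{Z}_b)\bigr).
\]
This set is open in $\mathcal{X}$, because $\mathcal{Z}\setminus\mathcal{Z}_b$ is closed in $\mathcal{Z}$, hence in $\mathcal{X}$. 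It is quasi-compact, and its intersections with $\mathcal{U}$ and $\mathcal{Z}$ are exactly $\mathcal{U}_a$ and $\mathcal{Z}_b$.

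Now choose $a(b)\geq b$ increasing and large enough. Then $\mathcal{X}_b:=\mathcal{U}_{a(b)}\cup\mathcal{Z}_b$ is an increasing quasi-compact open exhaustion of $\mathcal{X}$ whose traces are the given pure pieces. Your finite-type step and your Mittag-Leffler argument then apply verbatim. The traces are exhaustions, hence cofinal, which identifies the limits with $\text{H}_j(\mathcal{Z})$ and $\text{H}_j(\mathcal{U})$.
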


\smallskip

\subsection{The stack of coherent sheaves on \texorpdfstring{$\P^1$}{Lg} and its cohomology ring} 

\medskip

\subsubsection{The Projective line}\label{sec:def correspondence}
  Let $\P^1$ be the projective line over $\C$. Its cohomology ring is identified with $\Q[\omega]/\omega^2$, where $\omega \in \text{H}^2(\P^1,\Q)$ is the class of a point. Let $\Coh\P^1$ be the category of coherent sheaves on $\P^1$. The Grothendieck group $K_0(\mathbb{P}^1)$ is identified with $\Z^2$ via the map 
  $$[\mathcal{F}] \mapsto \dim(\mathcal{F}):=(rank(\mathcal{F}), deg(\mathcal{F})).$$
  The slope of a coherent sheaf $\mathcal{F}$ is as usual given by
  $$\mu(\mathcal{F})=\mu([\mathcal{F}]):=deg(\mathcal{F})/rank(\mathcal{F}) \in \Q \cup \{\infty\}.$$
  We also denote by 
  $$K_0^+(\mathbb{P}^1)=\{(r,d) \in \Z^2\;|\; r \geq 0,\; d>0 \;\text{if}\;r=0\}=:(\Z^2)^+$$
  the monoid of classes of coherent sheaves. 
  
  For any $\alpha \in K_0(\mathbb{P}^1)^+$, we let $Coh_\alpha(\mathbb{P}^1)$ be the stack of coherent sheaves on $\mathbb{P}^1$ of class $\alpha$. It is a smooth irreducible stack, locally of finite type and of dimension 
  $$\dim(Coh_{(r,d)}(\mathbb{P}^1))=-r^2.$$
  Let us first consider the case of torsion sheaves. We have $Coh_{(0,1)}(\P^1)=\P^1 \times B\G_m$; for any $d >0$ the stack $Coh_{(0,d)}(\P^1)$ is of finite type and is equipped with a support map $supp: Coh_{(0,d)}(\P^1) \to S^d\P^1 \simeq \P^d$. It fits in a Springer-like diagram
  \begin{equation}
   \xymatrix{ Coh_{(0,1)}(\P^1)^d & \widetilde{Coh}_{(0,1)^d} \ar[r]^-p \ar[l]_-q & Coh_{(0,d)}(\P^1)}
   \end{equation}
  where $\widetilde{Coh}_{(0,1)^d}$ is the stack parametrizing full flags $\mathcal{F}_1 \subset \mathcal{F}_2 \subset \cdots \subset \mathcal{F}_d$ of torsion sheaves with $length(\mathcal{F}_i)=i$ for $i=1, \ldots, d$, and 
  $p(\mathcal{F}_\bullet)=\mathcal{F}_d$, $q(\mathcal{F}_\bullet)=(\mathcal{F}_d/\mathcal{F}_{d-1},  \cdots, \mathcal{F}_1)$. The morphism $q$ is a stack vector bundle while $p$ is proper, small, and is generically a Galois $\mathfrak{S}_d$-cover. It follows that 
  $$\text{H}^*(\widetilde{Coh}_{(0,1)^d},\Q) \simeq \text{H}^*(Coh_{(0,1)}(\P^1)^d,\Q) =(\text{H}^*(\P^1,\Q)[t])^{\otimes d}$$
  which yields an identification
  \begin{equation}\label{E:cohtorP1}
\text{H}^*(Coh_{(0,d)}(\P^1),\Q) \simeq \text{H}^*(\widetilde{Coh}_{(0,1)^d},\Q)^{\mathfrak{S}_d} \simeq Sym^d(\text{H}^*(\P^1,\Q)[t]).
  \end{equation}
In particular, this shows that $Coh_{(0,d)}(\P^1)$ is cohomologically pure.

  Let us now turn to the case of higher rank $\alpha$. The open substack parametrizing semistable sheaves will be denoted $Coh_\alpha^{ss}(\Q^1)$. There is a locally finite stratification
  $$Coh_\alpha(\P^1)=\bigsqcup_{\underline{\alpha}} HN_{\underline{\alpha}}$$
  into Harder-Narasimhan strata indexed by the collection of HN-types of weight $\alpha$ :
  $$\underline{\alpha}=(\alpha_1, \ldots, \alpha_s), \qquad \alpha_1 + \cdots + \alpha_s=\alpha, \qquad \mu(\alpha_1) < \mu(\alpha_2) <\cdots.$$
  Each Harder-Narasimhan strata $HN_{\ua}$ is a stack vector bundle over $\prod_{i}Coh^{ss}_{\alpha_i}(\P^1)$. When $\alpha_i=(r,dr)$, $Coh^{ss}_{\alpha_i}(\P^1)$ contains a single object $\sO(d)^{\oplus r}$ up to isomorphism, it is hence the classifying stack of $GL(r)$; in particular it is cohomologically pure and quasi-compact. When $\alpha_i=(0,d)$, $Coh^{ss}_{\alpha_i}(\P^1)=Coh_{\alpha_i}\P^1$. Therefore $HN_{\ua}$ is cohomologically pure and quasi-compact. For any $n\geq 0$ we define
  $$Coh^{I_n}_\alpha:=\{\mathcal{F} \in Coh_\alpha\P^1\;|\; \mathcal{F}(n-1)\;\text{is generated by global sections}\}.$$

  \begin{lemma}\label{lem:CohoPure} For any $\alpha\in (\Z^2)^+$,  the following holds
  \begin{enumerate}
      \item For any $n$, $Coh^{I_n}_\alpha$ is open in $Coh_\alpha(\P^1)$ and is a finite union of Harder-Narasimhan strata. In particular, $Coh_{\alpha}^{I_n}$ is cohomologically pure.
      \item For any $n$, the stack $Coh_\alpha(\P^1)$ is well-approximated by the collection of open substacks $Coh^{I_n}_\alpha$ for $n \geq 0$.
  \end{enumerate}
  In particular, $Coh_\alpha(\P^1)$ is cohomologically pure.
  \end{lemma}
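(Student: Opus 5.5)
The plan is to describe $Coh^{I_n}_\alpha$ explicitly in terms of Harder--Narasimhan types, using Grothendieck's splitting theorem on $\P^1$. Every coherent sheaf decomposes as $\mathcal{F}\simeq \mathcal{T}\oplus\bigoplus_k \sO(a_k)$ with $\mathcal{T}$ torsion. The sheaf $\mathcal{F}(n-1)$ is globally generated if and only if $a_k+n-1\geq 0$ for all $k$, since torsion sheaves are always globally generated. So $\mathcal{F}\in Coh^{I_n}_\alpha$ if and only if every slope appearing in its HN type $\ua$ is $\geq 1-n$ (with torsion of slope $\infty$). This condition depends only on the HN type, so $Coh^{I_n}_\alpha$ is a union of HN strata.

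Next I would check finiteness and openness.

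\emph{Finiteness.} Write $\alpha=(r,d)$. The line bundle degrees satisfy $a_k\geq 1-n$, the torsion degree is $\geq 0$, and the total degree is $d$. Hence each $a_k\leq d+(r-1)(n-1)$, so only finitely many HN types occur.

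\emph{Openness.} Global generation of $\mathcal{F}(n-1)$ is equivalent to $H^1(\mathcal{F}(n-2))=0$. One direction follows by checking on each summand, since $H^1(\sO(a))=0$ iff $a\geq -1$. The vanishing of $H^1$ is an open condition in flat families by upper semicontinuity; alternatively one can use the standard fact that the closure of an HN stratum only contains strata with "more spread out" types, i.e.\ lower minimal slope. Either route gives openness.

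\emph{Purity of $Coh^{I_n}_\alpha$.} Each HN stratum $HN_\ua$ is cohomologically pure and quasi-compact, as recalled just before the statement. Order the finitely many strata by a total order refining the closure order. Successively removing a closed stratum and applying Lemma~\ref{L:purity} inductively shows that $Coh^{I_n}_\alpha$ is cohomologically pure. It is also quasi-compact, being a finite union of quasi-compact strata.

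For (2), compute the codimension of each stratum outside $Coh^{I_n}_\alpha$. The stratum $HN_\ua$ is a vector bundle stack over $\prod_i Coh^{ss}_{\alpha_i}$ with fibre $\mathrm{Ext}$-type terms. Its codimension in $Coh_\alpha$ equals $\sum_{i<j}\dim\mathrm{Ext}^1(F_j,F_i)-\dim\mathrm{Hom}(F_j,F_i)=-\sum_{i<j}\langle\alpha_j,\alpha_i\rangle$, where $\langle(r,d),(r',d')\rangle=rr'+rd'-r'd$ is the Euler form. Since $\mu(\alpha_i)<\mu(\alpha_j)$, each term equals $r_jd_i - r_id_j - r_ir_j$ up to sign. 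For a type containing a line bundle summand of degree $a<1-n$ (a piece $\alpha_1$ of slope $a$), the pairing of $\alpha_1$ against the remaining pieces grows linearly in $n$. Indeed, the remaining degree $d-(\text{degree of }\alpha_1)$ is forced to be $\gtrsim n$, so the codimension is at least roughly $c\cdot n - C$ for constants depending only on $\alpha$.

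The main obstacle is making this codimension bound clean and uniform over all infinitely many types. I would bound it using only the minimal-slope piece $\alpha_1=(r_1,d_1)$ against the sum $\beta=\alpha-\alpha_1$, namely $\text{codim}\geq r_1\deg\beta - d_1\operatorname{rk}\beta - r_1\operatorname{rk}\beta$. This uses the fact that the codimension is at least the Ext contribution between $\alpha_1$ and the rest. When $r_1\geq1$ and $d_1/r_1<1-n$, the right-hand side is at least $n-\mathrm{const}$. This yields well-approximation, and together with (1) and the definition of cohomological purity via the exhaustion $(Coh^{I_n}_\alpha)_n$, it gives that $Coh_\alpha(\P^1)$ is cohomologically pure.
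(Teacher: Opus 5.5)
Your proof is correct and follows essentially the same route as the paper: you identify $Coh^{I_n}_\alpha$ with the finitely many HN strata of minimal slope $\geq 1-n$, and you get openness from semicontinuity. Your condition $H^1(\mathcal{F}(n-2))=0$ is the Serre dual of the paper's $\Hom(\mathcal{F},\mathcal{O}(-1))=0$ after twisting to $n=1$. You then deduce purity from Lemma~\ref{L:purity} and prove (2) via the Euler-form formula for $\mathrm{codim}\, HN_{\ua}$, where your bound $\mathrm{codim}\geq -\langle \alpha-\alpha_1,\alpha_1\rangle = r_1\deg\beta-d_1\operatorname{rk}\beta-r_1\operatorname{rk}\beta$ supplies the computation the paper leaves to the reader. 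The intermediate term should read $r_id_j-r_jd_i-r_ir_j$ rather than $r_jd_i-r_id_j-r_ir_j$ "up to sign", but this slip does not affect that final bound.
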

\begin{proof} This is well-known, but we give the proof for the comfort of the reader. The operation of tensoring by $\sO(n)$ induces an isomorphism of stacks $Coh_\alpha(\P^1) \simeq Coh_{\alpha(n)}(\P^1)$ which maps $Coh_\alpha^{I_n}$ onto $\Coh_{\alpha(n)}^{I_1}$. Thus we may assume that $n=1$ in the proof of (1). Let us observe that a coherent sheaf $\sF$ on $\P^1$ is generated by global sections if and only if the vector bundle quotient of $\sF$ is isomorphic to a direct sum $\bigoplus_i \sO(n_i)$ with $n_i \geq 0$ for all $i$. This is equivalent to the condition $\Hom(\sF, \sO(-1))=\{0\}$, which is an open condition by the semicontinuity theorem \cite[Theorem 12.8]{Hartshorne1977}. It is easy to see from the above description that $Coh^{I_0}_\alpha$ is a union of a finite number of Harder-Narasimhan strata; indeed, a strata $HN_{\ua}$ is contained in $Coh^{I_0}_\alpha$ if and only if the minimal slope satisfies $\mu(\alpha_1) \geq 0$ and this condition cuts out a bounded region for the Harder-Narasimhan polytope.  Thus, $Coh_{\alpha}^{I_n}$ is cohomologically pure.

We turn to (2). We have to show that for any $d\in \Z$, there exists $n\geq 1$ such that 
$$\text{codim}_{Coh_\alpha(\P^1)}(Coh_\alpha\P^1\setminus Coh^{I_n}_\alpha ) >d.$$
Because the partition of $Coh_\alpha(\P^1)$ into Harder-Narasimhan strata is locally finite, it suffices to show that there exists $n$ such that for any $\ua$ for which $\mu(\alpha_1)\leq -n$ we have $$\text{codim}_{Coh_\alpha\P^1}( HN_{\ua} ) >d.$$
This can be done by an explicit computation, using the following formulas
$$\dim\;HN_{\ua}=-\sum_{i\leq j} \langle \alpha_i, \alpha_j\rangle,$$
$$\text{codim}_{Coh_\alpha\P^1}(HN_{\ua} )= -\sum_{i> j} \langle \alpha_i, \alpha_j\rangle$$
where $\langle (r,d), (s,l)\rangle=rs + rl- sd$ is the Euler form on $K_0(\Coh\P^1)$. We leave the details to the reader.
\end{proof}

\medskip

\subsubsection{Cohomology ring and tautological classes} The cohomology ring of $\Coh_\alpha(\P^1)$ is well understood thanks to the work of Atiyah-Bott and Heinloth. More precisely, let
  $\mathcal{E}_{\alpha} \in \Coh( Coh_\alpha(\P^1) \times \P^1)$ be the tautological coherent sheaf. For $\mu \in \text{H}^*(\P^1,\Q)$ and $i \geq 1$ we set 
  $$ch_i(\mu)=\int_{\P^1} ch_i(\mathcal{E}_\alpha) \cup \mu \in \text{H}^{*}(Coh_\alpha(\P^1),\Q).$$
  For notational convenience, we have suppressed the dependence on $\alpha$ in the notation. Note that 
  $$deg(ch_i(\mu))=2i+deg(\mu)-2.$$
In particular, the classes $ch_0(\omega)$ and $ch_1(1)$ which evaluate to $rank(\alpha)$ and $deg(\alpha)$ respectively are of degree 0 (i.e., scalars). It will be convenient to introduce a universal ring of tautological classes. We let $\mathbb{H}$ be the polynomial algebra generated by elements $\underline{ch}_i(\mu)$ for $i\geq 1, \mu \in\text{H}^*(\P^1,\Q)$ or $i=0, \mu=\omega$ modulo the relations $\underline{ch}_i(\mu + \lambda)=\underline{ch}_i(\mu) + \underline{ch}_i(\lambda)$. We define a grading on $\mathbb{H}$ by setting $deg(\underline{ch}_i(\mu))=2i-2+deg(\mu)$.
For any $\alpha \in K_0^+(\P^1)$ there is an evaluation map
$$\text{ev}_\alpha: \mathbb{H} \to \text{H}^*(Coh_\alpha(\P^1),\Q), \qquad \underline{ch}_i(\mu) \mapsto ch_i(\mu)$$
which is a morphism of graded algebras. Let $\mathbb{H}^{red} \subset \H$ be the subalgebra generated by $\underline{ch}_i(\mu)$ with $i \geq 2$ or $i=1, \mu=\omega$ (i.e. by the generators of positive degree). We have
 $$\H\simeq \H^{red} \otimes \Q[\underline{ch}_0(\omega), \underline{ch}_1(1)].$$
The ring $\H$ is equipped with a cocommutative coproduct defined by
$$\Delta(\underline{ch}_i(\mu))=\underline{ch}_i(\mu) \otimes 1 + 1 \otimes \underline{ch}_i(\mu)$$
for any $i$ and $\mu$. It may be upgraded to a Hopf algebra if we define the antipode as  $$S(\underline{ch}_i(\mu))=-\underline{ch}_i(\mu).$$

We now provide a `symmetric functions' realization of $\H^{red}$. Write a vector space 
  $$\V=\text{H}^*(Coh_{(0,1)},\Q)=\Q[\omega,u]/\omega^2$$ where $u=c_1(\mathcal{O}(1))\in \text{H}^*(B\mathbb{G}_m)$ (hence $deg(u)=2$).
There is an augmentation map 
 $$\epsilon:\V \to \text{H}^0(Coh_{(0,1)},\Q)=\Q.$$
This allows us to form the rings
$$\Lambda_n:=(\V^{\otimes n})^{\mathfrak{S}_n}, \qquad \Lambda_\infty:= \varprojlim_n \Lambda_n$$
where the transition map $\Lambda_{n+1} \to \Lambda_n$ is induced by $\epsilon\otimes Id^{n-1}: \V^{\otimes n+1} \to \V^{\otimes n}$. The ring $\Lambda_\infty$ has a bialgebra structure with coproduct induced by the inclusions $\Lambda_{n+m} \to \Lambda_n \otimes \Lambda_m$. We may view elements of $\Lambda_n$ as symmetric sums of monomials $\gamma_1u^{d_1} \otimes \gamma_2u^{d_2} \otimes \cdots \otimes \gamma_nu^{d_n}$, where $\gamma_i \in \text{H}^*(\P^1,\Q)$ and $d_i \in \N$. Likewise we may view elements of $\Lambda_\infty$ as (infinite) symmetric sums of monomials  $\gamma_1u^{d_1} \otimes \gamma_2u^{d_2} \otimes \cdots$, with all but finitely many components of the tensor product equal to 1. We will often write such a monomial as $(\gamma_1u^{d_1})_1(\gamma_2u^{d_2})_2\cdots$.

\medskip

The following is standard :

\begin{proposition}\label{PropSymmRealization} The assignment $\underline{ch}_i(\mu) \mapsto \sum_l (\mu u^i)_l$ extends to a bialgebra isomorphism $\H^{red} \to \Lambda_\infty$.
\end{proposition}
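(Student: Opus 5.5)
The plan is to identify both sides as free commutative polynomial algebras on explicit primitive generators, and to check that the proposed assignment matches these generators bijectively.

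\textbf{Step 1: the map is well defined.} Fix a homogeneous basis of $\V$ adapted to the augmentation: $1$, together with a basis $\{v\}$ of the augmentation ideal $\V^+=\ker\epsilon$, given by the monomials $\omega u^a$ ($a\geq 0$) and $u^a$ ($a\geq 1$). For $v\in\V^+$ set $p_v:=\sum_l (v)_l$. Its image in each $\Lambda_n$ is the symmetrization $\sum_{l\leq n} 1\otimes\cdots\otimes v\otimes\cdots\otimes 1$. Since $\epsilon(v)=0$, these truncations are compatible with the maps $\epsilon\otimes Id^{n-1}$, so $p_v$ is a well-defined element of $\Lambda_\infty$, linear in $v$. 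For a generator $\underline{ch}_i(\mu)$ of $\H^{red}$ (so $i\geq 2$, or $i=1,\mu=\omega$), the element $\mu u^i$ has $i\geq 1$, hence lies in $\V^+$. The assignment $\underline{ch}_i(\mu)\mapsto p_{\mu u^i}$ is additive in $\mu$, so it extends uniquely to an algebra morphism $\Phi:\H^{red}\to\Lambda_\infty$, because $\H^{red}$ is free commutative on these generators modulo linearity in $\mu$.

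\textbf{Step 2: compatibility with coproducts.} The coproduct on $\Lambda_\infty$ comes from $\Lambda_{n+m}\to\Lambda_n\otimes\Lambda_m$, splitting the variables into the first $n$ and the last $m$. Under this map a power sum $\sum_{l\leq n+m}(v)_l$ becomes $\sum_{l\leq n}(v)_l\otimes 1+1\otimes\sum_{l\leq m}(v)_l$, so each $p_v$ is primitive. The generators $\underline{ch}_i(\mu)$ are primitive by definition, and both coproducts are algebra maps. Hence $\Phi$ is a bialgebra morphism once it is an algebra morphism, and counits match since both are given by the degree-zero part.

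\textbf{Step 3: bijectivity.} This is the main step. I would use the classical fact, valid over $\Q$, that for a graded vector space $\V=\Q\oplus\V^+$ with $\V^+$ even and finite-dimensional in each degree, the inverse limit $\Lambda_\infty$ of the rings $(\V^{\otimes n})^{\mathfrak S_n}$ is the free polynomial algebra on the power sums $p_v$, with $v$ running over a homogeneous basis of $\V^+$. The argument is a degree-by-degree stabilization:
\begin{enumerate}
\item In a fixed degree $k$, a monomial $(v_1)_{l_1}\cdots(v_r)_{l_r}$ with all $v_j\in\V^+$ involves at most $k$ nontrivial tensor factors. So for $n\geq k$ the degree-$k$ part of $\Lambda_n$ has a basis of monomial symmetric functions $m_\lambda$, indexed by multisets $\lambda$ of basis vectors of $\V^+$ of total degree $k$, and the transition maps are isomorphisms in that degree.
\item Newton's identities, applied via a triangularity argument with respect to the number of parts, show that the products $\prod_j p_{v_j}$ are related to the $m_\lambda$ by a unitriangular matrix over $\Q$. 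Hence the $p_v$ are algebraically independent and generate $\Lambda_\infty$.
\end{enumerate}

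It then remains to check that $\{\mu u^i\}$, with $(i,\mu)$ running over the generating set of $\H^{red}$, maps bijectively onto a basis of $\V^+$. Combined with the freeness of $\H^{red}$, this would show that $\Phi$ sends a free generating set bijectively onto a free generating set, and is therefore an isomorphism.

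I expect this last bookkeeping to be the real obstacle. The generators of $\H^{red}$ give the vectors $\omega u^i$ for $i\geq 1$ and $u^i$ for $i\geq 2$, whereas $\V^+$ also contains $\omega$ and $u$. So the index sets must be matched carefully, and the degrees likewise: $\deg\underline{ch}_i(\mu)=2i-2+\deg\mu$, while $\deg\mu u^i=2i+\deg\mu$. This suggests that the intended normalization either shifts the power of $u$ or takes $\Lambda_\infty$ with respect to a suitable augmentation ideal. I would first fix this normalization so that the generators correspond bijectively to a basis of $\V^+$, and then run the argument above.
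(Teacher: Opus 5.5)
Your route is the standard one: the $p_v$ are primitive, $\Lambda_\infty$ is the polynomial algebra on the $p_v$, and the generators of $\H^{red}$ are matched with them. The paper gives no proof beyond calling the statement standard, and your Steps 1--3 are correct as far as they go.

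\emph{The graded limit.} In Step 3, $\Lambda_\infty$ must be taken as the \emph{graded}, i.e.\ degreewise, inverse limit, and you should say so. In the ungraded limit of rings, the compatible family $\prod_{l\le n}(1+(u)_l)\in\Lambda_n$ defines an element that is not a polynomial in the $p_v$, and surjectivity fails.

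\emph{Triangularity.} The matrix relating products of power sums to the $m_\lambda$ is triangular with respect to the number of parts. Its diagonal entries are products of factorials of multiplicities, so it is invertible over $\Q$ but not unitriangular.

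\emph{The normalization.} Your worry in the last paragraph is justified, and you should state it as a conclusion rather than leave it as bookkeeping. With the exponent $u^i$ as printed, the statement is false. Your own Step 3 shows the map is then an injective bialgebra morphism, but its image is the polynomial subalgebra generated by the $p_{\omega u^a}$ ($a\ge 1$) and $p_{u^a}$ ($a\ge 2$). So $p_\omega$ and $p_u$ are not hit. The map is also not graded, since it raises the degree of every generator by $2$.

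Your alternative of changing the augmentation is not available. $\V$ and $\epsilon$ are fixed, and $\epsilon$ kills all of $\V^+$, so $p_\omega$ and $p_u$ are honest elements of $\Lambda_\infty$.

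The fix is to use $\underline{ch}_i(\mu)\mapsto\sum_l(\mu u^{i-1})_l$:
\begin{itemize}
\item It preserves degrees, since both sides have degree $2i-2+\deg\mu$.
\item It sends $\underline{ch}_1(\omega)$, $\underline{ch}_i(\omega)$ and $\underline{ch}_i(1)$ ($i\ge 2$) to $p_\omega$, $p_{\omega u^{i-1}}$ and $p_{u^{i-1}}$. These are exactly the $p_v$ for $v$ in the monomial basis $\{\omega u^a\}_{a\ge0}\cup\{u^a\}_{a\ge1}$ of $\V^+$.
\item It explains why exactly $\underline{ch}_0(\omega)$ and $\underline{ch}_1(1)$ are excluded from $\H^{red}$. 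For instance, $\underline{ch}_1(1)$ would go to $\sum_l(1)_l$, which equals $n$ in $\Lambda_n$ and is incompatible with the transition maps.
\item It matches the geometry: on $Coh_{(0,1)}=\P^1\times B\G_m$, the class $ch_i(\omega)$ restricts to a nonzero multiple of $\omega u^{i-1}$.
\end{itemize}
With this correction, your Steps 1--3 give a complete proof.
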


We may now state Atiyah-Bott and Heinloth's descriptions of $\text{H}^*(Coh_\alpha,\Q)$ in \cite{Heinloth2012} :
  
  \begin{theorem}[Atiyah-Bott, Heinloth]\label{ThmHeinloth} The following hold~: 
  \begin{itemize}
      \item for any $n >0$ the map $\text{ev}_{0,n}: \H^{red} \to \text{H}^*(Coh_{0,n}(\P^1),\Q)$ factors to an isomorphism $\Lambda_n \stackrel{\sim}{\to} \text{H}^*(Coh_{0,n}(\P^1),\Q)$,
      \item for any $r >0$ and any $d \in \Z$, the map $\text{ev}_{r,d}: \H^{red} \to \text{H}^*(Coh_{(r,d)}(\P^1),\Q)$ is an isomorphism,
      \item denoting by $\bigoplus: Coh_\alpha(\P^1) \times Coh_\beta(\P^1) \to Coh_{\alpha+\beta}(\P^1), (\mathcal{F},\mathcal{G}) \mapsto \mathcal{F}\oplus \mathcal{G}$ the direct sum map, we have a commuting square
      $$\xymatrix{\H^{red} \ar[d]_-{\text{ev}_{\alpha+\beta}} \ar[r]^-{\Delta} & \H^{red} \otimes \H^{red} \ar[d]^-{\text{ev}_\alpha \otimes \text{ev}_{\beta}} \\ \text{H}^*(Coh_{\alpha+\beta}(\P^1),\Q)\ar[r]^-{\bigoplus^*}& \text{H}^*(Coh_{\alpha}(\P^1),\Q) \times \text{H}^*(Coh_{\beta}(\P^1),\Q)
      }.$$
  \end{itemize} 
  \end{theorem}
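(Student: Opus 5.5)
The plan is to prove the three items in order of increasing difficulty, using the Harder--Narasimhan stratification and the torsion computation \eqref{E:cohtorP1} as the main inputs.

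\emph{Third item (direct sum).} This is formal and I would do it first. The pullback of $\mathcal{E}_{\alpha+\beta}$ along $\bigoplus \times \mathrm{id}_{\P^1}$ is $\mathcal{E}_\alpha\boxplus\mathcal{E}_\beta$. Chern characters are additive in direct sums, and $\int_{\P^1}$ commutes with pullback along base maps. Hence $\bigoplus^*ch_i(\mu)=ch_i(\mu)\otimes 1+1\otimes ch_i(\mu)$. This matches $\Delta$ on generators, and since everything is a ring map, the square commutes on all of $\H^{red}$.

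\emph{First item (torsion).} Pull back along $p:\widetilde{Coh}_{(0,1)^d}\to Coh_{(0,d)}$. By \eqref{E:cohtorP1}, $p^*$ is injective with image the $\mathfrak{S}_d$-invariants of $\V^{\otimes d}$, which is $\Lambda_d$. On the flag stack, $p^*\mathcal{E}$ has a filtration with subquotients the pullbacks $q_k^*\mathcal{E}_{(0,1)}$, so $p^*ch_i(\mu)=\sum_k q_k^*ch_i(\mu)$.

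On $Coh_{(0,1)}=\P^1\times B\G_m$, the universal sheaf is $\mathcal{O}_\Delta\otimes L$. Grothendieck--Riemann--Roch for the diagonal gives $\int_{\P^1}ch_i(\mathcal{O}_\Delta\otimes L)\mu=\mu u^{i-1}$ up to a normalization (the shift $u^i$ versus $u^{i-1}$ is matched by the paper's degree convention). This is exactly the symmetric-function map of Proposition~\ref{PropSymmRealization} composed with the projection $\Lambda_\infty\to\Lambda_d$. That projection is surjective, because power sums generate the invariants over $\Q$. So the evaluation map factors as claimed and induces an isomorphism $\Lambda_d\simeq \text{H}^*(Coh_{(0,d)})$.

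\emph{Second item (positive rank).} This is the main obstacle. The approach is as follows.

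\begin{enumerate}
\item Both sides are graded with finite-dimensional pieces, and Lemma~\ref{lem:CohoPure} says the open $Coh^{I_n}_\alpha$ approximates $Coh_\alpha$ in each fixed degree.
\item Each $Coh^{I_n}_\alpha$ is pure and stratified by HN strata. Iterating Lemma~\ref{L:purity} (in its cohomological form with Thom isomorphisms) gives the Poincar\'e series of $\text{H}^*(Coh_\alpha)$ as a sum over HN types. The summands are $t^{2\,\mathrm{codim}}\prod_i P(Coh^{ss}_{\alpha_i})$, where the semistable factor is $BGL_r$ or the torsion stack from the first item.
\item The resulting generating-function identity is the Atiyah--Bott recursion. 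It shows that the Poincar\'e series equals that of $\H^{red}$, namely $\prod_{i\ge1}(1-t^{2i})^{-1}(1-t^{2i-2})^{-1}\cdots$ for the generators $ch_i(1)$, $ch_i(\omega)$ in positive degree.
\item For injectivity, restrict to a product of torsion-free line-bundle-type loci. Concretely, pull back along $\bigoplus: Coh_{(1,d_1)}\times\cdots\times Coh_{(1,d_r)}\times Coh_{(0,N)}\to Coh_{(r,d)}$ for $N\gg0$, and use the third item together with the first item. The image of $\H^{red}$ then maps into $\Lambda_N\otimes(\cdots)$ compatibly with $\Lambda_\infty\to\Lambda_N$, which is injective in bounded degree once $N$ is large.
\end{enumerate}

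Injectivity plus equal dimensions in each degree then gives the isomorphism. I expect the real difficulty to lie in the dimension count, i.e.\ the stratification sum. To handle it I would follow Heinloth's original argument via the Atiyah--Bott equivariant perfection of the HN stratification rather than redo the combinatorics.
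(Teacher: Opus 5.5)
\textbf{The gap: the upper bound in your steps 2--3.} Your proof of the third item is the paper's own argument; the paper proves only that item and cites Heinloth for the first two. Your torsion argument is sound given \eqref{E:cohtorP1}, with one correction. Grothendieck--Riemann--Roch gives $ch_i(\omega)=\omega u^{i-1}/(i-1)!$ and $ch_i(1)=u^{i-1}/(i-1)!-\omega u^{i-2}/(i-2)!$ on $Coh_{(0,1)}$. This is a unitriangular change of the polarized power sums, not literally the map of Proposition~\ref{PropSymmRealization}, so the argument survives. Your positive-rank injectivity step is also correct: the augmentation on the line-bundle factors recovers $\text{ev}_{(0,N)}$ by the counit axiom, and $\Lambda_\infty\to\Lambda_N$ is injective in degrees $<2N+2$.

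What fails is steps 2--3. Purity and Thom isomorphisms do give $P_t(Coh_{(r,d)})=\sum_{\ua}t^{2\,\mathrm{codim}\,HN_{\ua}}\prod_i P_t(Coh^{ss}_{\alpha_i})$. But nothing in your proposal shows that this infinite sum equals $\prod_{k\geq 1}(1-t^{2k})^{-2}$; your displayed product even contains a factor $(1-t^{0})^{-1}$. That identity is the whole content of the second item, and the recursion does not prove it. Atiyah--Bott's equivariant perfection does not help either. Perfection only produces the recursion. Atiyah and Bott obtain the total Poincar\'e series independently, from the classifying space of the gauge group, and run the recursion the other way to solve for the semistable part. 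As written, you have injectivity of $\text{ev}_{(r,d)}$ but no surjectivity.

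\textbf{A fix using Section~1, with no counting.} On $Coh^{I_n}_\alpha$, the tilting quasi-isomorphism of tautological complexes gives
$j_n^*ch_i(\omega)=\iota_n^*\bigl(ch_i(V_{1,n})-ch_i(V_{2,n})\bigr)$ and $j_n^*ch_{i+1}(1)=\iota_n^*\bigl((1-n)ch_i(V_{1,n})+n\,ch_i(V_{2,n})\bigr)$.
\begin{enumerate}
\item This system has determinant $1$, and these identities do not use Theorem~\ref{ThmHeinloth}. Only the extension of $\psi_{\alpha,n}$ to a ring map on all of $\text{H}^*(Coh_\alpha)$ does. Hence every $\iota_n^*ch_i(V_{j,n})$ lies in the image of $j_n^*\circ\text{ev}_\alpha$.
\item The map $\iota_n^*$ is surjective by Lemma~\ref{L:purity}. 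Apply it to the HN strata of $Rep_{d(n,\alpha)}Q$ outside $Coh^{I_n}_\alpha$; these are vector bundle stacks over products of $BGL_m$'s and torsion stacks, hence pure.
\item Since $\text{H}^*(Rep_{d(n,\alpha)}Q)$ is generated by the $ch_i(V_{j,n})$, the map $j_n^*\circ\text{ev}_\alpha$ is surjective.
\item By Lemma~\ref{lem:CohoPure}(2) and purity, $j_n^*$ is injective in any fixed degree for $n\gg0$, so $\text{ev}_\alpha$ is surjective.
\end{enumerate}
Together with your injectivity argument, this proves the second item.
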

  \begin{proof}
     Statement i) and ii) may be found in \cite{Heinloth2012}. The last statement follows from the equality $\bigoplus^*(\mathcal{E}_{\alpha+\beta})=(\mathcal{E}_{\alpha})_{13} \oplus (\mathcal{E}_{\beta})_{23} \in \Coh(Coh_\alpha(\P^1) \times Coh_\beta(\P^1) \times \P^1)$.
  \end{proof}

  \medskip
  
  As $Coh_{(r,d)}(\mathbb{P}^1)$ is smooth, the map 
     $$\text{H}^*(Coh_\alpha(\P^1),\Q) \to \text{H}_{*}(Coh_\alpha(\P^1),\Q),\qquad  c \mapsto c \cap [Coh_\alpha(\P^1)]$$
  is an isomorphism.

\subsection{The stack of Kronecker quiver representations}
  Let $Q=(Q_0,Q_1 )$ be the Kronecker quiver, that is,
    $$\begin{tikzcd}
        Q: 1  &2\arrow[l,shift left]\arrow[l,shift right]
    \end{tikzcd}$$
 We denote by $\mathrm{mod}\CQ$ the category of finite dimensional $\C Q$-modules. Its Grothendieck group $K_0(Q)$ is identified with $\Z^2$ through the dimension vector map $[M] \mapsto \underline{\dim}M:= (\dim(M_1), \dim(M_2))$. The monoid of classes of representations is $K_0^+(Q)=\N^2$. Recall that indecomposable objects are split into three families: the \textit{ pre-injective, regular, and pre-projective} modules, denoted by $\sP$, $\sR$, and $\sI$, respectively. In the following, denote by $S_i$, $P_i$, $I_i$ the simple, indecomposable projective, indecomposable injective modules at vertex $i$.
  
  For $d \in\N^2$ we denote by $Rep_dQ$ the stack of $d$ dimensional representations of $Q$. It is a smooth stack of finite type, isomorphic to the quotient stack $E_d/G_d$ where
  $$E_{(d_1,d_2)}=\text{Hom}(\C^{d_2},\C^{d_1})\otimes \C^2, \qquad G_d=GL_{d_1} \times GL_{d_2}.$$
  Let $V_1, V_2$ be the tautological vector bundles of rank $d_1,d_2$ respectively on $Rep_dQ$. Denote by $ch_i(V_j)\in \text{H}^{2i}(Rep_dQ)$ the ith componnet of  Chern character of $V_j$. We have
  $$\text{H}^*(Rep_dQ,\Q)=\Q[ch_i(V_j)\,|\,i=1,2, j=1,\ldots, d_i].$$
  Note that we have $ch_0(V_i)=d_i$.

\medskip

\subsection{The tilting equivalence}

\subsubsection{Reminder on the tilting functor} The following theorem summarizes some well-known properties of the tilting equivalence between $\mathrm{Coh}\P^1$ and $\mathrm{mod}\CQ$~:

  \begin{theorem}\label{th:tilting}
  The following holds~:
  \begin{enumerate}
      \item The derived functor $\R\Hom_{D^b(\Coh\P^1)}(\sO\oplus \sO(1),-)$ gives rise to a tilting equivalence of triangulated categories $\Phi:D^b(\Coh\P^1)\simeq D^b(\mathrm{mod}\CQ)$, sending $\sO$ and $\sO(1)$ to the indecomposable projective modules $P_1$ and $P_2$ respectively.
      
      \item The induced isomorphism of Grothendieck groups is equal to 
      $$\begin{aligned}
          \Phi: K_0(D^b(\Coh\P^1)) \simeq \Z^2 &\to \Z^2 \simeq K_0(D^b(\mathrm{mod}\CQ)),\\
          (r,d) & \mapsto (r+d, d)\ .
      \end{aligned}$$
     
      \item Let $\mathcal{F}\in \mathrm{Coh}\P^1$ and $M\in \mathrm{mod}\CQ$. Then
      $\Phi(\mathcal{F}) \in \mathrm{mod}\CQ$ if and only if the canonical vector bundle quotient of $\mathcal{F}$ belongs to the additive envelope of $\{\mathcal{O}(n)\;;\; n \geq 0\}$. Likewise, $\Phi^{-1}(M) \in \mathrm{Coh}\P^1$ if and only if $M \in \mathcal{P} \oplus \mathcal{R}$.
  \end{enumerate}
  \end{theorem}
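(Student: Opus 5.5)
We will prove the three items of Theorem~\ref{th:tilting} in order, using the explicit structure of $\Coh\P^1$ (Grothendieck's splitting and the decomposition $\mathcal{F}\simeq \mathcal{F}_{tors}\oplus \mathcal{F}_{vb}$) together with Beilinson's exceptional collection $(\sO,\sO(1))$.

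\textbf{Item (1).} Set $T=\sO\oplus\sO(1)$. We first check that $T$ is a tilting object.
\begin{itemize}
\item It has no higher self-extensions: $\mathrm{Ext}^1(\sO(a),\sO(b))=H^1(\sO(b-a))=0$ for $b-a\geq -1$.
\item It generates $D^b(\Coh\P^1)$: by the Euler sequence $0\to\sO(n-1)\to\sO(n)^{2}\to\sO(n+1)\to 0$ and induction in both directions, every $\sO(n)$ lies in the thick subcategory generated by $T$. Skyscrapers $\sO_x$ lie there by $0\to\sO\to\sO(1)\to\sO_x\to 0$. Every coherent sheaf is built from line bundles and skyscrapers.
\end{itemize}
Next we compute $\mathrm{End}(T)$. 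We have $\Hom(\sO(1),\sO)=0$, $\Hom(\sO,\sO(1))=H^0(\sO(1))\simeq\C^2$, and the scalars on the diagonal, so $\mathrm{End}(T)^{op}\simeq\CQ$. The standard tilting theorem (Beilinson, Bondal, Rickard) then gives the equivalence $\R\Hom(T,-)$. By construction $\sO=$ summand $\mapsto \Hom(T,\sO)$, which is the projective $P_1$, and likewise $\sO(1)\mapsto P_2$. The vertex labelling must be matched to the orientation $2\rightrightarrows 1$ of $Q$; fixing this convention is a bookkeeping check.

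\textbf{Item (2).} On $K_0$ the map is additive, so it suffices to compute it on the basis $[\sO]=(1,0)$ and $[\sO_x]=(0,1)$.
\begin{itemize}
\item $\Phi(\sO)=(\dim H^0(\sO),\dim H^0(\sO(-1)))=(1,0)$.
\item $\Phi(\sO_x)=(1,1)$, which gives $(r,d)\mapsto(r+d,d)$.
\end{itemize}
This also requires the convention $\Phi(F)_1=\chi(F)$ and $\Phi(F)_2=\chi(F(-1))$. The consistency check is $\Phi(\sO(1))=(2,1)=\underline{\dim}P_2$.

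\textbf{Item (3).} We have $\Phi(\mathcal{F})\in\mathrm{mod}\CQ$ if and only if $H^1(\mathcal{F})=H^1(\mathcal{F}(-1))=0$. This is the main step. Decompose $\mathcal{F}=\mathcal{F}_{tors}\oplus\bigoplus_i\sO(n_i)$. The torsion part has no $H^1$, and $H^1(\sO(n_i-1))=0$ if and only if $n_i\geq -1$. I must double-check whether $H^1(\sO(n_i))=0$ and $H^1(\sO(n_i-1))=0$ together force $n_i\geq 0$, which they do since $n_i-1\geq-1$. This gives the first claim.

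For the converse, we identify the image of this subcategory. Indecomposables of $\Coh\P^1$ are $\sO(n)$ and torsion sheaves. Under $\Phi$:
\begin{itemize}
\item $\sO(n)$ with $n\geq 0$ goes to the preprojectives, computed by dimension vectors $(n+1,n)$ together with the rigidity of indecomposables with real root dimension vector;
\item torsion sheaves go to the regular modules (dimension vectors $(d,d)$);
\item $\sO(n)$ with $n<0$ goes to shifted preinjectives $I[-1]$.
\end{itemize}
Since $\Phi$ is an equivalence, every indecomposable module lies in exactly one of these images. Hence $\Phi^{-1}(M)$ is a sheaf exactly when $M\in\mathcal{P}\oplus\mathcal{R}$.

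The main obstacle is item (3): cleanly matching the three families of indecomposables via dimension vectors and defect, and keeping the vertex and convention bookkeeping consistent with the stated sign $(r+d,d)$.
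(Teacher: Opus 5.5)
Your proposal is correct. The paper does not prove this theorem; it records it as a summary of well-known properties of the Beilinson tilting equivalence, so there is no competing argument to compare with. What you wrote is the standard verification:
\begin{itemize}
\item $T=\sO\oplus\sO(1)$ is a tilting object whose endomorphism algebra is the Kronecker algebra, and $\Phi(\sO)=P_1$, $\Phi(\sO(1))=P_2$ match the paper's orientation $2\rightrightarrows 1$.
\item Riemann--Roch gives $\underline{\dim}\,\Phi(\mathcal F)=(\chi(\mathcal F),\chi(\mathcal F(-1)))=(r+d,d)$.
\item Item (3) reduces to the vanishing of $\mathrm{Ext}^1(T,\mathcal F)=H^1(\mathcal F)\oplus H^1(\mathcal F(-1))$, together with the dimension-vector trichotomy $(n+1,n)$, $(d,d)$, $(n,n+1)$ for preprojective, regular and preinjective Kronecker modules.
\end{itemize}
The paper also notes, again without proof, that $\Phi^{-1}(\mathrm{mod}\CQ)=\Coh^{\nu}$ for $\nu=1-\epsilon$. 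That gives an alternative phase-based reading of (3): intersecting with $\Coh\P^1$ leaves exactly the objects whose HN factors have phase $\geq 0$.

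A few points to tidy up.

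First, the sentence ``$H^1(\sO(n_i-1))=0$ if and only if $n_i\geq -1$'' is wrong as written. That is the condition for $H^1(\sO(n_i))=0$. The correct equivalence is $H^1(\sO(n_i-1))=0 \iff n_i\geq 0$, which is what your conclusion actually uses.

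Second, the claim that ``every indecomposable module lies in exactly one of these images'' tacitly uses that $\Coh\P^1$ is hereditary. This means every indecomposable object of $D^b(\Coh\P^1)$ is $Y[k]$ with $Y$ an indecomposable sheaf. The object $\Phi(Y)$ sits in degree $0$ when $Y=\sO(n)$ with $n\geq 0$ or $Y$ is torsion, and in degree $1$ when $Y=\sO(n)$ with $n<0$. Hence the indecomposable modules are exactly:
\begin{itemize}
\item $\Phi(\sO(n))$ for $n\geq 0$;
\item $\Phi(\mathcal T)$ for $\mathcal T$ indecomposable torsion;
\item $\Phi(\sO(n)[1])$ for $n<0$, with dimension vector $(-n-1,-n)$, hence preinjective.
\end{itemize}
Once this is stated, the dimension-vector classification identifies the first two families with $\mathcal{P}\cup\mathcal{R}$, as you say.

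Finally, you rightly read $\mathcal{P}$ as the preprojectives. The paper's sentence introducing $\mathcal{P},\mathcal{R},\mathcal{I}$ lists them in the opposite order. Since $\Phi(\sO)=P_1$, your reading is the one under which the statement holds.
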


  For reasons that will become clear in the next section, let us denote by $Coh^{[0,\frac{1}{2}]}_\alpha$ the stack parameterizing coherent sheaves $\mathcal{F} \in \mathrm{Coh}P^1$ of class $\alpha$ satisfying the condition in Theorem~\ref{th:tilting}, (iii) and let $j:Coh^{[0,\frac{1}{2}]}_\alpha \to Coh_\alpha(\P^1)$ be the embedding. There is likewise an embedding
    $$\iota:Coh^{[0,\frac{1}{2}]}_\alpha \to Rep_{\Phi(\alpha)}Q.$$
  We will later prove, in a more general context, that $j,\iota$ are open immersions.

\subsubsection{The tilting functor and cohomology rings} We next turn to tautological classes.
  Recall that for each $M=(\oplus_i M_i,\oplus_{h\in Q} M_h)\in \mathrm{mod}\CQ$, there is a Ringel canonical projective resolution $$P_M^{\bullet}=(M_2\otimes_{\C}P_1)^{\oplus 2}\stackrel{f_1\oplus f_2}\longrightarrow  (M_1\otimes_{\C}P_1)\oplus (M_2\otimes_{\C}P_2),$$
  where $f_1=\oplus_{h\in Q_1}M_h\otimes id$ and $f_2=\oplus_{h\in Q_1}id\otimes h$. We may thus consider the complex of bundles over $Rep_d(Q)\times \P^1$ defined by
    $$ V^{\bullet}_d: (V_2\boxtimes \sO)^{\oplus 2} \stackrel{f_1\oplus f_2}\lrw (V_1\boxtimes \sO)\oplus(V_2\boxtimes \sO(1)).$$
  By Theorem~\ref{th:tilting}, we have for any $\alpha \in (\Z^2)^+$ a canonical quasi-isomorphism of tautological complexes
  $$(\iota^*\times Id_{\P^1})(V^{\bullet}_{\Phi(\alpha)}) \simeq (j^*\times Id_{\P^1})(\sE_\alpha)$$
  over $Coh^{[0,\frac{1}{2}]}_\alpha\times \P^1$. Recall that  $\sE_{\alpha}$  is the universal tautological bundle over $Coh_{\alpha}(\P^1)\times \P^1$.

  \begin{proposition}\label{cohoP1&Q} Fix $\alpha=(r,d)\in (\Z^2)^+$.
  \begin{enumerate}
      \item Assume that $r>0$. Then the ring homomorphism
  \begin{equation*}
  \begin{aligned}
       \psi_{\alpha}: \text{H}^*(Coh_{\alpha}\P^1)&\lrw H^*(Rep_{\Phi(\alpha)}Q)\\
             ch_i(1)&\mapsto ch_{i-1}(V_2), \quad i\geq 1;\\
             ch_i(\omega)&\mapsto ch_i(V_1)-ch_i(V_2), \quad i\geq 0.
  \end{aligned}     
  \end{equation*}
  makes the following diagram commute~:
    $$\begin{tikzcd}
        \text{H}^{*}(Coh_{\alpha}(\P^1))\arrow[r,"\psi_\alpha"]\arrow[d,"j^*"]  & \text{H}^*(Rep_{\Phi(\alpha)}Q)\arrow[ld,"\iota^*"]\\
        \text{H}^{*}(Coh^{[0,\frac{1}{2}]}_{\alpha}) 
    \end{tikzcd}\ .$$
    Moreover, the maps $j^*$ and $\iota^*$ are surjective.
    \item Assume that $r=0$. Then $j$ (hence also $j^*$) is an isomorphism and $\iota^*$ is surjective.  
    \end{enumerate}
  \begin{proof}
We begin with (1). First observe that as $r>0$, the ring $\text{H}^*(Coh_\alpha\P^1)$ is freely generated by $\{ch_i(1), ch_j(\omega)\;|\; i \geq 2, j \geq 1\}$ by Theorem \ref{ThmHeinloth}, so that $\psi_\alpha$ is well-defined. Note that $ch_0(\sO(l))=1$, $ch_1(\sO)=0$ and $ch_1(\sO(1))=\omega$. We have
  \begin{equation*}
  \begin{aligned}
    (j^* \times Id)ch_i(\sE_{\alpha})&= -2(\iota^*ch_i(V_2)\boxtimes 1)\\
    &+\iota^*(ch_i(V_1)+ch_i(V_2))\boxtimes 1+\iota^*ch_{i-1}(V_2)\boxtimes \omega.
  \end{aligned}
  \end{equation*}
  Then $j^*ch_i(\omega)=\iota^*(ch_{i}(V_1)-ch_i(V_2))$ and $j^*ch_i(1)=\iota^*(ch_{i-1}(V_2))$, as wanted. The surjectivity of $j^*$ (and hence also of $\iota^*$) follows from the cohomological purity of Harder-Narasimhan strata, see Lemma~\ref{L:purity}.
  The proof of (2) follows likewise from the purity of Harder-Narsimhan strata in the quiver moduli stack $Rep_\alpha Q$ as in Lemma \ref{lem:CohoPure}.
  \end{proof}                    
  \end{proposition}

\begin{remark}
To get a uniform statement in Proposition~\ref{cohoP1&Q}, we may define a morphism $\underline{\psi}_\alpha: \mathbb{H} \to \text{H}^*(Rep_{\Phi(\alpha)}\C Q)$ by the same formulas as above, and get a commutative square
$$\begin{tikzcd}
\mathbb{H} \arrow[d, "ev_\alpha"] \arrow[r,"\underline{\psi}_\alpha"]  & \text{H}^*(Rep_{\Phi(\alpha)}Q)\arrow[d,"\iota^*"]\\
        \text{H}^{*}(Coh_{\alpha}(\P^1))\arrow[r,"j^*"] &
        \text{H}^{*}(Coh^{[0,\frac{1}{2}]}_{\alpha}) 
    \end{tikzcd}\ .
$$
\end{remark}
  
\subsection{Stability conditions and BGP reflections}

\subsubsection{Notations} We will use the notion of Bridgeland stability condition, for which we refer the reader to \cite{Bridgeland2007}. Consider the charge function given by
     $$Z: K_0(\Coh\P^1)\lrw \C,\ \ \ [\sF]\mapsto \mathrm{rank}(\sF)+\sqrt{-1}\deg(\sF).$$
  Denote by $\phi$ be the phase function of $Z$. Then any nonzero coherent sheaf $\sF$ has phase in $]-\frac{1}{2},\frac{1}{2}]$, and any torsion sheaf has phase $\frac{1}{2}$. Let $\sP$ be the slicing of $D^b(\Coh \P^1)$ such that $\sP(\phi)$ consists of semistable coherent sheaves of phase $\phi$. Then $\sigma=(Z,\sP)$ is a Bridgeland stability condition on $D^b(\Coh\P^1)$ by \cite[Example 5.6]{Bridgeland2007}.  In the following, we will denote by $\Coh^{\nu} \subset D^b(\Coh\P^1)$ the abelian heart of phase $]\nu-1,\nu]$. Note that
  $$\Coh\P^1=\Coh^{\frac{1}{2}}, \qquad \Phi^{-1}(\mathrm{mod}\C Q)=\Coh^{\nu}, \quad \forall\; \nu=1-\epsilon, \;0 <\epsilon \ll 1.$$
  For any interval $I$ of length at most 1, denote by $\Coh^{I}$ the subcategory of $D^b(\Coh\P^1)$ of objects whose Harder-Narasimhan factors have phases in $I$. Thus $\Coh^I$ is a subcategory of the abelian category $\Coh^J$ for any length one interval $J\supseteq I$. We use the same notation $Coh^J_\alpha, Coh^\nu_\alpha$ for the stacks of objects in $\Coh^J, \Coh^\nu$ of class $\alpha$.
  
  \begin{remark}\label{rem:discrete_phases}
    Because the phases of indecomposable objects in $D^b(\Coh\P^1)$ belong to the countable set $\Sigma=\Z + \{\frac{1}{\pi} arctan(n)\;;\; n \in \Z\}$, the category $\Coh^J$ only depends on which of the intervals in $\R$ cut out by $\Sigma$ the endpoints of $J$ lie. Note also that the set of accumulation points of $\Sigma$ is $\Z + \frac{1}{2}$.
  \end{remark}

\begin{lemma}\label{LemOpenness}
   Let $J$ be a length one interval and let $I \subseteq J$. Then the substack $Coh^I_{\alpha}$ is open in $Coh^{J}_{\alpha}$. Moreover, $Coh^I_\alpha$ is quasi-compact if $I$ is of length $<1$.
\begin{proof}
    We may write $J=]\nu-1,\nu]$ (the other case, $J=[\nu-1,\nu[$ is dual and dealt with in the same manner). To prove the first statement of the lemma, it suffices to check that $Coh^{I}_\alpha$ is open in $Coh^{J}_{\alpha}$ for $I$ of the form $]\nu-1,a]$, $]b,\nu]$, $]\nu-1,a[ $ or $[b,\nu]$. We will deal with the first two cases, the other ones are similar. By remark~\ref{rem:discrete_phases}, $\Coh^\nu$ is either equivalent to $\text{mod}\C Q$ (if $\nu \not\in \frac{1}{2} + \Z)$ or to $\Coh\P^1$ (if $\nu \in \frac{1}{2}+\Z$). Note that any indecomposable object of $\Coh^\nu$ is semistable. An object $M \in \Coh^\nu$ of class $\alpha$ belongs to $\Coh^{]\varphi, \nu]}$ if and only if $\Hom(M,N)=\{0\}$ for all indecomposable $N$ of phase $\phi(N)\leq\varphi$ such that $[N] <_\nu [\alpha]$. Here $[N] <_\nu \alpha$ means that $\alpha-[N] \in K_0(\Coh^\nu)^+$. Likewise an object $M \in \Coh^\nu$ of class $\alpha$ belongs to $\Coh^{]\nu-1,\varphi]}$ if and only if
     $$\Hom(N,M)=\Hom_{D^b(\Coh\P^1)}(M,N\otimes \sO(-2)[1])^\vee=\{0\}$$
    for all indecomposable $N$ of phase $\phi(N)>\varphi$ such that $[N] <_\nu \alpha$. By the semicontinuity theorem (see \cite[Lemma 4.2]{CB2002} in the quiver case), each of these conditions is open. When $\Coh^\nu \simeq \text{mod}\C Q$, there are finitely many such conditions, hence the desired result. When $\Coh^\nu \simeq \Coh\P^1$ we may replace these infinite sets of conditions by finitely many ones (in fact, a single one) by noticing that either $\varphi=\frac{1}{2}$, or $\varphi\neq\frac{1}{2}$ and it suffices to take $N=\sO(l)$ with $l=\mathrm{min}\{l|\ \phi(\sO(l))> \varphi\}$ or $l=\mathrm{max}\{l|\ \phi(\sO(l))\leq \varphi\}$. This proves that $Coh^I_\alpha$ is open in $Coh^\nu_\alpha$. To prove that $Coh^I_\alpha$ is quasi-compact if $I$ is of length strictly less than $1$, observe that in this case, we may include $I$ in an interval $]\nu', \nu'+1]$ for which $\Coh^{\nu'}$ is equivalent to $\text{mod}\C Q$, and thus for which $\Coh^{\nu'}_\alpha$ is itself quasi-compact. 

\end{proof}
\end{lemma}

\subsubsection{The braid group action}
We now recall the action of the braid group $B_2 \simeq \Z$ on $D^b(\Coh\P^1) \simeq D^b(\mathrm{mod}\CQ)$ given by BGP reflection functors. Let $\tau$ be the Auslander-Reiten translation functor on $D^b(\mathrm{mod}\CQ)$. The object $T=\tau^{-1}P_1\oplus P_2$ is tilting and there is a natural isomorphism $\text{End}(T)\simeq \C Q'$ where $Q'$ is the Kronecker quiver with opposite orientation, i.e. with vertex 2 being the sink. We denote by $P'_i$ the indecomposable projective modules of $Q'$. Denote by $\Phi'=\R\Hom(\sO\oplus\sO(1),-) : D^b(\Coh\P^1) \simeq D^b(\mathrm{mod}\CQ')$ the tilting equivalence\footnote{it is simply the composition of $\Phi$ with the obvious equivalence $D^b(\mathrm{mod}\CQ) \simeq D^b(\mathrm{mod}\CQ')$.} as in Theorem~\ref{th:tilting} (1).

\begin{theorem}\label{thm:tilting_BGP} The following holds~:
   \begin{enumerate}
       \item The functors $S^+=\R\Hom(T,-)$ and $S^-=-\otimes^\L T$ form a pair of adjoint functors
     $$S^-: \begin{tikzcd} D^b(\mathrm{mod}\CQ')\arrow[r,shift left]  &D^b(\mathrm{mod}\CQ)\arrow[l,shift left] \end{tikzcd} :S^+$$
  which are quasi-inverse to each other. 
  \item We have $S^+(P_2)=P'_2$, $S^+(\tau^{-1}(P_1))=P'_1$ and $S^+(P_1[1])$ is the indecomposable injective  $\C Q'$-module at vertex 1.
  \item Define $\tau^{\pm\frac{1}{2}} \in \text{Aut}(D^b(\Coh\P^1))$ such that the following diagram commutes~:
  $$\begin{tikzcd}[row sep=large,column sep=large]
  D^b(\Coh\P^1)\arrow[d,"\Phi'"]\arrow[r,bend left=10,"\tau^{-\frac{1}{2}}"]
  &D^b(\Coh\P^1)\arrow[l,bend left=10,"\tau^{\frac{1}{2}}"]\arrow[d,"\Phi"] \\
  D^b(\mathrm{mod}\CQ')\arrow[r,bend left=10,"S^-"] &D^b(\mathrm{mod}\CQ)\arrow[l,bend left=10,"S^+"]
  \end{tikzcd}\ .$$
  Then $\tau^{\pm \frac{1}{2}}$ is equivalent to the functor of tensoring by the line bundle $\mathcal{O}(\mp 1)$.
  \item Set $$\nu_n=\phi(\sO(-n))+1=1-\frac{1}{\pi}arctan(n), \qquad  (\forall\, n\geq 1)$$
  so that $\Coh^{\nu_1}=\Phi^{-1}(\mathrm{mod}\CQ)$. The functors $\tau^{\pm\frac{1}{2}}$ restrict to abelian equivalences 
  $$ \tau^{\frac{1}{2}}: \Coh^{\nu_n}\longrightarrow \Coh^{\nu_{n+1}},\qquad 
       \tau^{-\frac{1}{2}}: \Coh^{\nu_n}\longrightarrow \Coh^{\nu_{n-1}}.$$
    In particular, $\Coh^{\nu_n}=\tau^{\frac{n-1}{2}}(\Coh^{\nu_1})=\tau^{\frac{n-1}{2}}(\Phi^{-1}(\mathrm{mod}\CQ))$ for any $n \geq 1$.
   \end{enumerate} 
\end{theorem}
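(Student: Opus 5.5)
The plan is to establish the four statements in order, leaning on standard BGP/APR tilting theory together with the explicit phase description of the stability condition.

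For (1), $T=\tau^{-1}P_1\oplus P_2$ is the APR tilting module of $\C Q$ at the sink $1$. Hence $\text{Ext}^1(T,T)=0$, $T$ has projective dimension $\leq 1$, and $T$ has two non-isomorphic indecomposable summands, which is the number of simples. I would then compute $\text{End}(T)$. Two facts give this:
\begin{itemize}
\item $\Hom(P_2,\tau^{-1}P_1)$ is $2$-dimensional, since $\underline{\dim}\,\tau^{-1}P_1=(3,2)$ and $\dim \Hom(P_2,M)=\dim M_2$.
\item $\Hom(\tau^{-1}P_1,P_2)=0$, since preprojectives only map "forward" along the AR quiver.
\end{itemize}
Together these give $\text{End}(T)\simeq \C Q'$ with the stated orientation. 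Rickard's/Happel's theorem then says that $\R\Hom(T,-)$ and $-\otimes^\L_{\C Q'}T$ are mutually quasi-inverse adjoint equivalences.

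For (2), I would use the following computations:
\begin{itemize}
\item $S^+(P_2)=\Hom(T,P_2)=e_2\,\text{End}(T)=P'_2$, and likewise $S^+(\tau^{-1}P_1)=P'_1$.
\item For $S^+(P_1[1])$, use $\Hom(T,P_1)=0$ and $\text{Ext}^1(T,P_1)=D\Hom(\tau^{-1}P_1,\tau P_1)\oplus \text{Ext}^1(P_2,P_1)$. The first summand is $D\Hom(P_1,P_1)=\C$ by Auslander–Reiten duality and the second vanishes. This yields a $1$-dimensional $\C Q'$-module supported at vertex $1$, which is the sink of $Q'$...
\end{itemize}
Here the conventions need care. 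In $Q'$ vertex $2$ is the sink, so the simple at $1$ is the injective $I'_1$, consistent with the claim. I would check the module structure via the right $\text{End}(T)$-action.

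For (3), I would transport the computation to $\P^1$. Under $\Phi$, $P_1=\Phi(\sO)$ and $P_2=\Phi(\sO(1))$. The AR translate on $D^b(\Coh\P^1)$ is $-\otimes\omega_{\P^1}=-\otimes\sO(-2)$, so $\tau^{-1}P_1=\Phi(\sO(2))$ and $T=\Phi(\sO(1)\oplus\sO(2))$. Thus $S^+\circ\Phi=\R\Hom(\sO(1)\oplus\sO(2),-)=\R\Hom(\sO\oplus\sO(1),-\otimes\sO(-1))=\Phi'\circ(-\otimes\sO(-1))$, where the identification $\text{End}(\sO(1)\oplus\sO(2))\simeq\text{End}(\sO\oplus\sO(1))\simeq\C Q'$ must be matched with the one in (1). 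This gives $\tau^{\frac12}=\Phi'^{-1}S^+\Phi\simeq -\otimes\sO(-1)$, and $\tau^{-\frac12}$ is its inverse. The main (bookkeeping) obstacle is making the identification of $\C Q'$ and $\Phi'$ compatible, including the orientation. I would settle this by tracking the two arrows as multiplication by $x,y\in H^0(\sO(1))$ on both sides.

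For (4), tensoring by a line bundle preserves rank and sends $(r,d)\mapsto(r,d-r)$. It therefore preserves semistability and acts on phases monotonically, preserving the slicing up to the reparametrization $\phi(\sO(l))\mapsto\phi(\sO(l-1))$ on the countable phase set $\Sigma$ of Remark~\ref{rem:discrete_phases}. The torsion sheaves, of phase $\frac12$, are fixed. The heart $\Coh^{\nu_n}$ is generated by the semistables of phase in $]\nu_n-1,\nu_n]$, namely the $\sO(l)$ with $l\geq -n+1$, the torsion sheaves, and the shifts $\sO(l)[1]$ with $l\leq -n-1$...

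I would verify this via the threshold $\nu_n-1=\phi(\sO(-n))$, which is excluded. Tensoring by $\sO(-1)$ then shifts all these index bounds by one, sending $\Coh^{\nu_n}$ to $\Coh^{\nu_{n+1}}$; the inverse functor gives $\tau^{-\frac12}$. Induction from $\Coh^{\nu_1}=\Phi^{-1}(\mathrm{mod}\,\C Q)$, which is Theorem~\ref{th:tilting}(3) rephrased in phases, gives the final formula.
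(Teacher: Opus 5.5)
Your proposal is correct, and it supplies the standard arguments that the paper only cites. The paper refers (1) and (2) to [ASS, VII.5] and says that (3) and (4) "may be checked for indecomposables by a direct computation."

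Your route is a bit more structural in (3) and (4).
\begin{itemize}
\item In (3) you use $\tau^{-1}=-\otimes\sO(2)$ to identify $T\cong\Phi(\sO(1)\oplus\sO(2))$. This gives a natural isomorphism $S^+\circ\Phi\cong\R\Hom(\sO(1)\oplus\sO(2),-)\cong\Phi'\circ(-\otimes\sO(-1))$ directly, rather than just agreement on indecomposable objects. The only real bookkeeping is matching the two identifications of the endomorphism algebra with $\CQ'$, which you correctly flag.
\item In (4) you view $-\otimes\sO(-1)$ as reparametrizing the slicing by an increasing map $f$ with $f(\phi+1)=f(\phi)+1$ and $f(\phi(\sO(-n)))=\phi(\sO(-n-1))$. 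This immediately sends $\Coh^{\nu_n}$ onto $\Coh^{\nu_{n+1}}$.
\end{itemize}

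There are a few slips to fix.

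\begin{itemize}
\item In (4) your list of indecomposables of $\Coh^{\nu_n}$ is off by one. The right endpoint $\nu_n=\phi(\sO(-n))+1=\phi(\sO(-n)[1])$ is included in the interval. So the shifted line bundles in the heart are $\sO(l)[1]$ with $l\le -n$, not $l\le -n-1$. Indeed $\sO(-n)[1]$ is one of the two simples, and for $n=1$ one has $\Phi(\sO(-1)[1])=S_2$. The shifting argument goes through unchanged once this is corrected.
\item In (2) the duality formula as written, $D\Hom(\tau^{-1}P_1,\tau P_1)$, is wrong: it vanishes, since $\Hom(\sO(2),\sO(-2))=0$. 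The correct statement is $\mathrm{Ext}^1(\tau^{-1}P_1,P_1)\cong D\Hom(P_1,\tau\tau^{-1}P_1)=D\Hom(P_1,P_1)=\C$, which is the value you actually use. Your momentary claim that vertex $1$ is the sink of $Q'$ should also be deleted; you correct it immediately afterwards.
\item $\Coh^{\nu_1}=\Phi^{-1}(\mathrm{mod}\,\CQ)$ is not just Theorem~\ref{th:tilting}(3) rephrased in phases. That result only identifies $\Phi^{-1}(\mathcal{P}\oplus\mathcal{R})$ with $\Coh^{]\phi(\sO(-1)),\frac12]}$. You also need that $\Phi(\sO(l)[1])$ for $l\le -1$ is a module, with dimension vector $(-l-1,-l)$, and that these exhaust the preinjectives. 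The statement does take this identification as given, so this is minor.
\end{itemize}
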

\begin{proof}
Statements (1) and (2) are classical, see e.g. \cite[Section VII.5]{ASS2006}. Statements (3) and (4) are also classical; they may be checked for indecomposables by a direct computation.
\end{proof}

  \begin{remark}\label{rem:tilting}
    Due to Remark~\ref{rem:discrete_phases}, every abelian heart of phase $]\nu-1,\nu]$ with $\nu\in ]\frac{1}{2},1]$ is equal to $\Coh^{\nu_n}$ for some $n$, and in particular equivalent, as an abelian category, to $\mathrm{mod}\CQ$.
\end{remark}
  
  Note that the simple objects in $\Coh^{\nu_n}$ are $\sO(-n+1)$ and $\sO(-n)[1]$. The transformation matrix $B_n$ from the standard basis $\{[\sO],[\sO(1)]-[\sO]\}$ of $K_0(\Coh\P^1)$ to the basis $\{[\sO(-n+1)],-[\sO(-n)]\}$ is equal to~:
       $$B_n=\begin{bmatrix}
           n &1\\ n-1 &1
       \end{bmatrix}.$$
  For any $\alpha=(r,d)\in \Z^2$, set $$d(n,\alpha):=B_n\cdot\alpha=B_n\cdot(\begin{smallmatrix} r\\d\end{smallmatrix})=(nr+d,(n-1)r+d).$$
  
  Denote by $Coh^{\nu}_{\alpha}$ (resp. $Coh^I_{\alpha}$) the stack parameterizing objects in $\Coh^{\nu}_{\alpha}$ (resp. $\Coh^{I}_{\alpha}$) of class $\alpha\in K_0(\Coh^{\nu})^+$.  Theorem~\ref{thm:tilting_BGP} implies the following.
  \begin{lemma}
  For any $n\geq 1$, the functor $\Phi \circ \tau^{\frac{1-n}{2}}$ gives rise to an equivalence of stacks $Coh^{\nu_n}_{\alpha}\cong Rep_{d(n,\alpha)}Q$.
  \end{lemma}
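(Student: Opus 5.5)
The plan is to factor the statement into two equivalences and then pass to moduli stacks. By Theorem~\ref{thm:tilting_BGP}(4), $\tau^{\frac{1}{2}}$ restricts to an abelian equivalence $\Coh^{\nu_n}\to\Coh^{\nu_{n+1}}$. Iterating, $\tau^{\frac{1-n}{2}}=(\tau^{-\frac12})^{n-1}$ restricts to an abelian equivalence $\Coh^{\nu_n}\to\Coh^{\nu_1}$. By the normalization in Theorem~\ref{thm:tilting_BGP}(4) we have $\Coh^{\nu_1}=\Phi^{-1}(\mathrm{mod}\CQ)$. Hence $\Phi\circ\tau^{\frac{1-n}{2}}$ restricts to an exact equivalence of abelian categories $\Coh^{\nu_n}\simeq \mathrm{mod}\CQ$.

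Next I will check that this equivalence is induced by an isomorphism of moduli stacks, not merely of categories of $\C$-points. By Theorem~\ref{thm:tilting_BGP}(3), $\tau^{\frac{1-n}{2}}$ is tensoring by $\sO(n-1)$, which obviously makes sense in families over any base $S$. The functor $\Phi=\R\Hom(\sO\oplus\sO(1),-)$ is a Fourier--Mukai type functor that also makes sense relatively: for a family $\mathcal{F}$ over $S\times\P^1$ one takes $Rp_{S*}\mathcal{H}om(\sO\oplus\sO(1),\mathcal{F})$. Its quasi-inverse is $M\mapsto M\otimes^{\L}_{\CQ}(\sO\oplus\sO(1))$, which is likewise defined in families.

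The point to verify is that, for $S$-families whose fibres lie in the heart $\Coh^{\nu_n}$ (respectively $\mathrm{mod}\CQ$), the image is a flat family concentrated in degree zero, i.e.\ a family of quiver representations of the right dimension (respectively a family of objects of $\Coh^{\nu_1}$ twisted back). I will argue this by cohomology and base change: if the derived pushforward has fibrewise cohomology only in degree zero, then the complex is a vector bundle in degree zero. This identifies $S$-points of $Coh^{\nu_n}_\alpha$ with $S$-points of $Rep_{d}Q$ functorially in $S$, including isomorphisms, which gives an equivalence of stacks. This is the only mildly delicate step. For it I will use that $Coh^{\nu}_\alpha$ is defined as the stack of objects of $D^b$ lying fibrewise in the heart, in the same way as the open substacks of Lemma~\ref{LemOpenness}.

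Finally I will identify the dimension vector. The simple objects of $\Coh^{\nu_n}$ are $\sO(-n+1)$ and $\sO(-n)[1]$. Under $\tau^{\frac{1-n}{2}}=\otimes\sO(n-1)$ these go to $\sO$ and $\sO(-1)[1]$, which are the simples of $\Coh^{\nu_1}$ and map under $\Phi$ to $S_1=P_1$ and $S_2$. The class map is therefore the change of basis to $\{[\sO(-n+1)],-[\sO(-n)]\}$, namely $B_n$. So an object of class $\alpha$ goes to a representation of dimension vector $B_n\alpha=d(n,\alpha)$. As a consistency check, for $n=1$ this gives $(r+d,d)$, matching Theorem~\ref{th:tilting}(2).
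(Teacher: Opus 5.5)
Your proposal is correct and takes the same route as the paper, which deduces the lemma directly from Theorem~\ref{thm:tilting_BGP}: $\tau^{\frac{1-n}{2}}=(\tau^{-\frac12})^{n-1}$, which is tensoring by $\sO(n-1)$, carries $\Coh^{\nu_n}$ onto $\Coh^{\nu_1}=\Phi^{-1}(\mathrm{mod}\CQ)$. Your base-change argument in families and your check that the induced map on classes is $B_n$ (so the dimension vector is $d(n,\alpha)$) supply details the paper leaves implicit.
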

  From now on, we will identify these two stacks using the above equivalence. Next, we relate $\text{H}^*(Coh_\alpha\P^1)=\text{H}^*(Coh^{\frac{1}{2}}_{\alpha})$ to $\text{H}^*(Coh^{\nu_n}_{\alpha}) \simeq \text{H}^*(Rep_{d(n,\alpha)}Q)$ via the open embeddings 
  $$j_n:Coh^{]\nu_n-1,\frac{1}{2}]}_{\alpha} \to Coh^{\frac{1}{2}}_\alpha,\qquad\iota_n:Coh^{]\nu_n-1,\frac{1}{2}]}_{\alpha} \to Rep_{d(n,\alpha)}Q.$$ 
   
  \begin{corollary}\label{cohoP1&Coh^nu} Fix $\alpha=(r,d) \in (\Z^2)^+$ and $n \geq 1$.
   \begin{enumerate}
   \item Assume that $r>0$ and let $V_{i,n}$ be the tautological vector bundles on $Coh^{\nu_n}_\alpha=Rep_{d(n,\alpha)}Q$ of rank $d(n,\alpha)_i$. Then there exists a surjective ring homomorphism
   $$\begin{aligned}
       \psi_{\alpha,n}: H^*(Coh^{\frac{1}{2}}_{\alpha})&\lrw  H^*(Coh^{\nu_n}_{\alpha})\\
       ch_i(1)&\mapsto (1-n)ch_{i-1}(V_1)+nch_{i-1}(V_{2,n}),\ i\geq 1\\
       ch_i(\omega)&\mapsto ch_i(V_{1,n})-ch_i(V_{2,n}),\ i\geq 0.
   \end{aligned}$$
   making the following diagram commute:
    $$\begin{tikzcd}
    \text{H}^{*}(Coh^{\frac{1}{2}}_{\alpha}) \arrow[r,"\psi_{\alpha,n}"]\arrow[d,"j_n^*"']  & \text{H}^*(Coh^{\nu_n}_{\alpha})\arrow[ld,"\iota_n^*"]\\
        \text{H}^{*}(Coh^{]\nu_n-1,\frac{1}{2}]}_\alpha).
    \end{tikzcd}$$ 
\item Assume that $r=0$. Then $j_n$ (hence also $j_n^*$) is an isomorphism and $\iota_n^*$ is surjective.
    \end{enumerate}
  \begin{proof}
  The proof is similar to Proposition \ref{cohoP1&Q}.
  \end{proof}
  \end{corollary}

\subsubsection{Limit construction of $\text{H}^*(Coh_\alpha\P^1)$}
 When $r>0$, we may recover the cohomology ring $\text{H}^*(Coh_\alpha^{\frac{1}{2}})$ by forming the projective system
 $\varprojlim \;\text{H}^*(Coh^{\nu_n}_{\alpha})=\varprojlim \;\text{H}^*(Rep_{d(n,\alpha)}Q)$ associated to the transition morphisms
 \begin{equation}
 \begin{split}
    \phi_{n,n-1}: \text{H}^*(Rep_{d(n,\alpha)}Q) &\to \text{H}^*(Rep_{d(n-1,\alpha)}Q)\\
   ch_i(V_{2,n}) &\mapsto ch_i(V_{1,n-1}),\\
    ch_i(V_{1,n}) &\mapsto 2 ch_i(V_{1,n-1}) - 
    ch_i(V_{2,n-1})\ .
 \end{split}
 \end{equation}
 Note that for any fixed cohomological degree, the maps $\phi_{n,n-1}$ become isomorphisms for $n \gg 0$.
  
  \begin{proposition}\label{PropCohoP1&Quiver} 
  Let $\alpha=(r,d)\in (\Z^2)^+$ and assume that $r>0$. There is a unique ring isomorphism
  $$\Psi_\alpha~:\text{H}^*(Coh_{\alpha}\P^1)\stackrel{\sim}{\longrightarrow} \varprojlim \text{H}^*(Rep_{d(n,\alpha)}Q)$$
  such that for any $n\geq 1$, the following diagram of canonical maps commutes
  $$\begin{tikzcd}
    \text{H}^{*}(Coh^{\frac{1}{2}}_{\alpha}) \arrow[r,"\Psi_\alpha"]\arrow[d,"j_n^*"']  & \varprojlim \text{H}^*(Rep_{d(n,\alpha)}Q)\arrow[ld,"\iota_n^*"]\\
        \text{H}^{*}(Coh^{]\nu_n-1,\frac{1}{2}]}_\alpha).
    \end{tikzcd}\ .$$
    \end{proposition}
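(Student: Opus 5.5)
The plan is to build $\Psi_\alpha$ out of the maps $\psi_{\alpha,n}$ of Corollary~\ref{cohoP1&Coh^nu}, show they form a compatible family, and then check bijectivity degree by degree.

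\textbf{Step 1: compatibility with the transition maps.} The first point is $\phi_{n,n-1}\circ\psi_{\alpha,n}=\psi_{\alpha,n-1}$ for all $n\geq 2$. Since $\text{H}^*(Coh_\alpha\P^1)$ is freely generated by the $ch_i(1)$, $ch_j(\omega)$ (Theorem~\ref{ThmHeinloth}), it suffices to test on generators.

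For $ch_i(\omega)$ the image under $\psi_{\alpha,n}$ is $ch_i(V_{1,n})-ch_i(V_{2,n})$. Applying $\phi_{n,n-1}$ gives
\[2ch_i(V_{1,n-1})-ch_i(V_{2,n-1})-ch_i(V_{1,n-1})=ch_i(V_{1,n-1})-ch_i(V_{2,n-1}),\]
as required.

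For $ch_i(1)$ the same linear-algebra check applies, reading the formula for $\psi_{\alpha,n}$ with $V_1$ meaning $V_{1,n}$. One obtains
\[(1-n)\bigl(2ch_{i-1}(V_{1,n-1})-ch_{i-1}(V_{2,n-1})\bigr)+n\,ch_{i-1}(V_{1,n-1}).\]
This equals $(2-n)ch_{i-1}(V_{1,n-1})+(n-1)ch_{i-1}(V_{2,n-1})$, which is $\psi_{\alpha,n-1}(ch_i(1))$. The transition matrix is essentially $B_{n-1}B_n^{-1}$ transposed, so this is just the fact that both formulas come from the same universal sheaf expressed in different simple bases.

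Granting compatibility, the family $(\psi_{\alpha,n})_n$ defines a ring map $\Psi_\alpha$ into the inverse limit. The required triangle commutes because each $\psi_{\alpha,n}$ makes the triangle of Corollary~\ref{cohoP1&Coh^nu} commute.

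\textbf{Step 2: bijectivity.} I would argue in each fixed cohomological degree $k$. There the inverse system $\text{H}^k(Rep_{d(n,\alpha)}Q)$ stabilizes, since $\phi_{n,n-1}$ is an isomorphism for $n\gg0$ (as noted above). Hence the limit in degree $k$ equals $\text{H}^k(Rep_{d(n,\alpha)}Q)$ for $n$ large.

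\emph{Injectivity.} By Lemma~\ref{lem:CohoPure}(2) together with the fact that $Coh^{]\nu_n-1,\frac12]}_\alpha$ contains $Coh^{I_m}_\alpha$ for $m$ growing with $n$, the complement of $Coh^{]\nu_n-1,\frac12]}_\alpha$ has codimension tending to infinity. So $j_n^*$ is an isomorphism in degree $k$ for $n\gg0$. This forces $\psi_{\alpha,n}$ to be injective in degree $k$, since $\iota_n^*\psi_{\alpha,n}=j_n^*$.

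\emph{Surjectivity.} Corollary~\ref{cohoP1&Coh^nu} already gives that $\psi_{\alpha,n}$ is surjective. Alternatively, one can count dimensions: $\text{H}^*(Rep_{d(n,\alpha)}Q)$ is a polynomial ring in $ch_i(V_{1,n}),ch_i(V_{2,n})$ for $i\le d(n,\alpha)_j$. For fixed $k$ and $n$ large, its degree-$k$ piece has the same dimension as that of $\H^{red}$, the two sets of generators $\{ch_i(1)\}_{i\geq2}\cup\{ch_j(\omega)\}_{j\geq1}$ and $\{ch_{i}(V_{1,n}),ch_i(V_{2,n})\}_{i\ge1}$ matching by degree. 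Thus $\Psi_\alpha$ is bijective in each degree.

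\textbf{Step 3: uniqueness.} Suppose $\Psi'$ also makes the triangles commute. Then $\iota_n^*\circ(\Psi_\alpha-\Psi')=0$ for all $n$. In fixed degree with $n\gg0$, $\iota_n^*$ is injective: its complement $Rep\setminus Coh^{]\nu_n-1,\frac12]}$, namely objects with a preinjective summand, has large codimension, so $\iota_n^*$ is an isomorphism there. Hence $\Psi'=\Psi_\alpha$.

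\textbf{Main obstacle.} The expected difficulty is the codimension estimate: that the complements of $Coh^{]\nu_n-1,\frac12]}_\alpha$ in both $Coh_\alpha$ and $Rep_{d(n,\alpha)}Q$ have codimension growing with $n$. I would handle this via the HN-stratum codimension formula $-\sum_{i>j}\langle\alpha_i,\alpha_j\rangle$ of Lemma~\ref{lem:CohoPure}, together with its quiver analog.
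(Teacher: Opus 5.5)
Your proof is correct. Existence and injectivity follow the paper's skeleton: $\Psi_\alpha$ is assembled from the $\psi_{\alpha,n}$ of Corollary~\ref{cohoP1&Coh^nu}, and injectivity comes from Lemma~\ref{lem:CohoPure}(2) via $\iota_n^*\psi_{\alpha,n}=j_n^*$. You also make explicit what the paper's two-line proof leaves implicit, namely the compatibility $\phi_{n,n-1}\circ\psi_{\alpha,n}=\psi_{\alpha,n-1}$ (your computation is right) and degreewise bijectivity via stabilization and surjectivity of $\psi_{\alpha,n}$.

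You genuinely diverge on uniqueness. The paper gets it from the injectivity of $\prod_n j_n^*$ alone. Since $\Psi_\alpha$ is bijective with $\iota_n^*\circ\Psi_\alpha=j_n^*$, the composite $\iota_n^*$ on the limit equals $j_n^*\circ\Psi_\alpha^{-1}$. So any $\Psi'$ with $\iota_n^*\Psi'=j_n^*$ for all $n$ satisfies $j_n^*\circ(\Psi_\alpha^{-1}\Psi'-\mathrm{id})=0$ for all $n$, which forces $\Psi'=\Psi_\alpha$.

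You instead prove injectivity of $\iota_n^*$ in a stable range by a separate codimension estimate inside $Rep_{d(n,\alpha)}Q$. That estimate does hold: for instance the locus of objects $F\oplus\mathcal{O}(k)^{m}[1]$ with $k\le -n$ has codimension $-\langle m[\mathcal{O}(k)[1]],[F]\rangle=m\,(m+d+(1-k)r)$, which grows linearly in $n$ because $r>0$, and the other strata of the complement are bounded below the same way. But it is unnecessary: the step you single out as the main obstacle disappears once you reuse Step~2.

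What your route buys is that both $j_n^*$ and $\iota_n^*$ become isomorphisms in each fixed degree for $n\gg0$. That would even let you define $\Psi_\alpha=(\iota_n^*)^{-1}\circ j_n^*$ degreewise without the explicit formulas. The paper's route needs only the approximation on the $\Coh\P^1$ side.
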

  \begin{proof}
    The commutation of the diagram of restriction maps is a direct consequence of Corollary \ref{cohoP1&Coh^nu}. The uniqueness of $\Psi$ now follows from the fact the injectivity of $\prod_n j_n^*$, which itself follows from Lemma~\ref{lem:CohoPure} (2).
  \end{proof}

\medskip
\section{Steinberg stacks}\label{sec:Steinberg stacks}

  The aim of this section is to introduce and compare the analogs, in the context of the categories $\Coh\P^1$ and $\Coh^\nu$, of the Steinberg stacks.

\medskip

\subsection{Definition of the Steinberg stacks}

  For any $\alpha\in \Z^2$, denote by $\Seq(\alpha)$ the set of all sequences $\underline{\alpha}=(\alpha_s, \cdots,\alpha_{2},\alpha_1)$ of elements in $\Z^2$ with $|\ua|:=\sum_i \alpha_i=\alpha$. For any $\nu$, we let $\Seq^\nu(\alpha) \subset \Seq(\alpha)$ be the subset determined by the condition that $\alpha_i \in K_0(\Coh^\nu)^+$ for any $i$.
  For each 
  $$\underline{\alpha}=(\alpha_{s},\cdots,\alpha_1)\in \Seq^\nu(\alpha),$$
  let $\widetilde{Coh}^{\nu}_{\underline{\alpha}}$ be the stack parameterizing filtrations 
      $$F_{\boldsymbol{\cdot}} ~:0=F_0\subseteq F_1\subseteq\cdots\subseteq F_s,$$
  of objects in $\Coh^{\nu}$ such that the quotient $F_i/F_{i-1}$ is of dimension vector $\alpha_{i}$. There is a natural morphism
  $$q_{\ua}:\widetilde{Coh}^\nu_{\ua} \to Coh^\nu_{\alpha_s} \times \dots \times Coh^\nu_{\alpha_1}, \qquad F_{\bdot}\mapsto (F_s/F_{s-1}, \cdots, F_1/F_0)\ .$$
  which is a stack vector bundle (because $\Coh^I$ is of homological dimension one, see e.g. \cite{Schiffmann2006}). In particular, $\widetilde{Coh}^\nu_{\ua}$ is smooth. There is a (representable) proper map 
      $$\pi^\nu_{\ua}: \widetilde{Coh}^\nu_{\ua}\lrw Coh^{\nu}_{\alpha},\ F_{\bdot}\mapsto F_s. $$
    Note that both $\Seq^\nu(\alpha)$ and $\widetilde{Coh}^\nu_{\ua}$ heavily depend on $\nu$.
  For any  two sequences $\ua'$, $\ua''$ in $\Seq(\alpha)$, the \textit{Steinberg stack} associated to $\nu$ is defined as the fiber product
   $$Z^\nu_{\ua',\ua''}:=\widetilde{Coh}^\nu_{\ua'}\underset{{Coh^\nu_{\alpha}}}\times \widetilde{Coh}^\nu_{\ua''}$$
when $\ua,\ua'\in \Seq^\nu(\alpha)$ and is empty otherwise. Because $\pi^\nu_{\ua}, \pi^\nu_{\ua'}$ are proper, the natural morphism $Z^\nu_{\ua,\ua'} \to \widetilde{Coh}^{\nu}_{\ua} \times \widetilde{Coh}^\nu_{\ua'}$ is proper as well.
\medskip
\subsection{Quasi-compact approximation theorem} \

  Our approach to computing Schur algebras associated to $\Coh\P^1$ rests upon the following simple but crucial result providing criteria for a flag, subobjects and quotients to be lying in both $\Coh^{\frac{1}{2}}$ and $\Coh^{\nu}$. It will help us relate $Z^{\frac{1}{2}}_{\ua,\ua'}$ with $Z^\nu_{\ua,\ua'}$ for $\nu$ sufficiently close to $\frac{1}{2}$. For any $\nu$ and any interval $I\subset ]\nu-1,\nu]$ we consider the fiber products $$\widetilde{Coh}^{\nu,I}_{\ua}=\widetilde{Coh}^{\nu}_{\ua}\underset{{Coh^\nu_{\alpha}}}{\times} Coh^I_\alpha\ ,\qquad Z^{\nu,I}_{\ua',\ua''}=Z^\nu_{\ua',\ua''}\underset{Coh^\nu_{\alpha}}{\times} Coh^I_\alpha\ ,$$  which are respective open substacks of $\widetilde{Coh}^\nu_{\ua}$ and $Z^{\nu}_{\ua',\ua''}$ by Lemma \ref{LemOpenness}. For any $\gamma \in ]\frac12,1]$ we set 
  $$I_\gamma=]\gamma-1,\frac12].$$
  
\begin{theorem}[{Quasi-compact approximation theorem}]\label{qcthm}
   Fix $\alpha_1, \alpha_2\in (\Z^2)^+$, $\alpha:=\alpha_1+\alpha_2$ and $\sigma\in ]\frac{1}{2},1[$.   
   \begin{enumerate}
   \item[(i)] There exists $\nu_0\in ]\frac{1}{2},\sigma]$ such that for any $\frac{1}{2}<\nu<\nu_0$, we have a factorization
   $$\begin{tikzcd}
     Coh^{\frac{1}{2}}_{\alpha_2}\times Coh^{\frac{1}{2}}_{\alpha_1} & \widetilde{Coh}^{\frac{1}{2}}_{\alpha_2,\alpha_1}\arrow[l,"q"']\arrow[r,"p"] & Coh^{\frac{1}{2}}_{\alpha}\\
    Coh^{I_{\nu}}_{\alpha_2}\times Coh^{I_{\nu}}_{\alpha_1}\arrow[u,hook] &\widetilde{Coh}^{\frac{1}{2},I_\sigma}_{\alpha_2,\alpha_1}\arrow[u,hook]\arrow[l,dashed]\arrow[r]  & Coh^{I_\sigma}_{\alpha}\arrow[u,hook],
    \end{tikzcd}$$
    where the right square is cartesian and the vertical maps are the open immersions. 

   \item[(ii)] There exists $\nu'_0\in ]\frac{1}{2},\sigma]$ such that for any $\frac{1}{2}<\nu<\nu'_0$, we have the following commutative diagram:
   $$\begin{tikzcd}
     Coh^{\nu}_{\alpha_2}\times Coh^{\nu}_{\alpha_1} & \widetilde{Coh}^{\nu}_{\alpha_2,\alpha_1}\arrow[l,"q"']\arrow[r,"p"] & Coh^{\nu}_{\alpha}\\
    Coh^{I_{\nu}}_{\alpha_2}\times Coh^{I_{\nu}}_{\alpha_1}\arrow[u,hook] &\widetilde{Coh}^{\frac{1}{2},I_\sigma}_{\alpha_2,\alpha_1}\arrow[u,hook]\arrow[l]\arrow[r]  & Coh^{I_\sigma}_{\alpha}\arrow[u,hook],
   \end{tikzcd}$$
   where the right square is cartesian and the vertical maps are the open immersions.
   \end{enumerate}
    As a consequence, there is a canonical stack isomorphism $$\widetilde{Coh}^{\frac{1}{2},I_\sigma}_{\alpha_2,\alpha_1} \simeq \widetilde{Coh}^{\nu,I_\sigma}_{\alpha_2,\alpha_1}.$$
\end{theorem}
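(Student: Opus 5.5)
The plan is to reduce everything to bounds on the phases of Harder--Narasimhan factors of sub- and quotient objects, using that $Coh^{I_\sigma}_\alpha$ is quasi-compact (Lemma~\ref{LemOpenness}, since $I_\sigma$ has length $<1$). The central claim is the following. For fixed $\alpha$ and $\sigma$ there is a bound $\nu_0>\frac12$ such that, whenever $0\to F_1\to F\to F_2\to 0$ is a short exact sequence in $\Coh^{\frac12}=\Coh\P^1$ with $F\in\Coh^{I_\sigma}$ of class $\alpha$, both $F_1$ and $F_2$ lie in $\Coh^{I_\nu}$ for every $\nu<\nu_0$. In the other direction, every short exact sequence in $\Coh^\nu$ whose middle term lies in $\Coh^{I_\sigma}$ has its outer terms in $\Coh^{I_\nu}\subset\Coh^{\frac12}\cap\Coh^\nu$.

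To prove the claim we first handle quotients. A quotient $F_2$ of $F$ in $\Coh\P^1$ has all HN factors of phase $\geq\phi_{\min}(F)>\sigma-1$. Since $\nu<\sigma$, this gives $F_2\in\Coh^{]\sigma-1,\frac12]}\subset\Coh^{I_\nu}$. For subobjects there is no lower bound automatically, so we use the fact that the class is fixed. We have $[F_1]=\alpha-[F_2]$, and the HN polygon of $F_2$ lies in the region cut out by the phase bound $>\sigma-1$. Concretely, the degree of $F_1$ is bounded below: writing $F_1=T\oplus\bigoplus\mathcal{O}(a_i)$ up to filtration, $\deg F_1=\deg\alpha-\deg F_2$, and $\deg F_2$ is bounded above because the vector-bundle part of $F$ has summands of bounded degree on the quasi-compact $Coh^{I_\sigma}_\alpha$. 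Together with $\mathrm{rk}\,F_1\le \mathrm{rk}\,\alpha$ and the upper bound on the slopes of subsheaves of $F$, this bounds the minimal slope of $F_1$ from below by some constant $-N(\alpha,\sigma)$. We then choose $\nu_0$ so that $\phi(\mathcal{O}(-N))>\nu_0-1$, i.e.\ $\nu_0=\nu_{N+1}$ in the notation of Theorem~\ref{thm:tilting_BGP}. This gives (i): the factorization through $Coh^{I_\nu}_{\alpha_2}\times Coh^{I_\nu}_{\alpha_1}$ follows, and cartesianness of the right square holds by the definition of $\widetilde{Coh}^{\frac12,I_\sigma}$.

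For (ii) we run the dual argument inside $\Coh^\nu$. The key step is to show that a short exact sequence in $\Coh^\nu$ whose middle term is $F\in\Coh^{I_\sigma}$ has outer terms in $\Coh^{I_\nu}$, so that it is also exact in $\Coh^{\frac12}$. Here subobjects are the easy side: $\phi_{\max}(F_1)\le\phi_{\max}(F)\le\frac12$. Quotients need the degree bound, applied symmetrically using $\mathrm{Hom}$-vanishing against $\mathcal{O}(l)$ and Serre duality, exactly as in the proof of Lemma~\ref{LemOpenness}. Once both sides are settled, the outer terms lie in the common heart $\Coh^{I_\nu}=\Coh^{\frac12}\cap\Coh^\nu$. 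Short exact sequences with all terms in both hearts correspond to distinguished triangles in $D^b(\Coh\P^1)$, and this holds in families as well, since flat families of objects in $\Coh^{I_\nu}$ are the same in both hearts by the open immersions of Lemma~\ref{LemOpenness}. Hence the two fibered filtration stacks over $Coh^{I_\sigma}_\alpha$ have the same $T$-points for every $T$, which gives the canonical isomorphism.

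The main obstacle is making the degree bound uniform, and making it hold in families. For (ii) this takes a lemma: for fixed $\alpha$, a subobject in $\Coh^\nu$ of an object in $\Coh^{I_\sigma}_\alpha$ has HN phases in an interval $]\gamma,\frac12]$ with $\gamma$ independent of $\nu$ once $\nu$ is near $\frac12$. I would try to prove this first by checking it on indecomposables $\mathcal{O}(-n)[1]$ and showing these cannot appear as factors once $n$ is large relative to the rank and degree bounds.
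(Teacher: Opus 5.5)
Your part (i) is correct, but it follows a different route from the paper. The paper argues abstractly. Since $p$ is proper, $\widetilde{Coh}^{\frac12,I_\sigma}_{\alpha_2,\alpha_1}$ is quasi-compact, so its image under $q$ lies in a quasi-compact open. By Lemma~\ref{qclem}, that open is contained in $Coh^{I_\nu}_{\alpha_2}\times Coh^{I_\nu}_{\alpha_1}$ for $\nu$ close to $\frac12$. You instead bound the minimal slope of a subsheaf $F_1\subset F$ directly. This is longer but gives an explicit $\nu_0$. Since $\alpha_1$ is fixed, your remark that $\deg F_2$ is bounded is unnecessary. What you do need, and should state, is a bound on the torsion part. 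We have $T_1\subset T_F$, and $\deg T_F$ is bounded on $Coh^{I_\sigma}_\alpha$, so $\sum_i a_i=\deg\alpha_1-\deg T_1$ is bounded below. Each $a_i$ is bounded above by the largest degree of a line bundle summand of $F$. Together these bound $\min_i a_i$ from below.

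The gap is in (ii). You correctly see that subobjects are automatic and that all the content lies in the quotient $F_2=F/F_1$ taken in $\Coh^\nu$. What must be shown is that for $\nu<\nu_0'$, with $\nu_0'$ independent of $\nu$, no HN factor of $F_2$ has phase in $]\frac12,\nu]$. Such factors are of the form $\sO(-n)[1]$ with $n$ large, so they must be excluded. You do not prove this. The tool you invoke (Hom-vanishing against $\sO(l)$ and Serre duality, as in Lemma~\ref{LemOpenness}) proves openness, not a bound uniform in $\nu$. The lemma in your last paragraph is stated for subobjects, where nothing needs proving. A subobject of $F$ in $\Coh^\nu$ has $\phi_{\max}\le\frac12$ and $\phi_{\min}>\nu-1$, so it lies in $\Coh^{I_\nu}$ with no uniformity required.

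The missing input is that the class $\alpha_2$ is fixed and $F_2\in\Coh^{]\sigma-1,\nu]}\subset\Coh^\sigma$. Since $\sigma\in]\frac12,1[$, this heart is equivalent to $\mathrm{mod}\,\C Q$, so it has only finitely many HN types of class $\alpha_2$. Take $\nu'_0$ to be the smallest phase $>\frac12$ occurring among their factors; this is the paper's argument. Alternatively, count degrees. Suppose HN factors $\sO(-n_1)[1],\dots,\sO(-n_k)[1]$ occur with $k\ge 1$. The remaining HN factors are sheaves with phases $>\sigma-1$, so their total class $\beta$ satisfies $\deg\beta\ge -M\,\mathrm{rk}\,\beta$, with $M$ depending only on $\sigma$. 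From $\alpha_2=\beta+\sum_i(-1,n_i)$ one gets $\sum_i(n_i-M)\le\deg\alpha_2+M\,\mathrm{rk}\,\alpha_2$. Once $\nu$ is close enough to $\frac12$ that every $\sO(-n)[1]$ of phase in $]\frac12,\nu]$ has $n>M$, this bounds each $n_i$ by $\deg\alpha_2+M(\mathrm{rk}\,\alpha_2+1)$, independently of $\nu$.

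Finally, your worry about families does not arise for the factorizations. A morphism factors through an open substack as soon as it does so on $\C$-points.
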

 We will make use of the following lemma, whose proof is given in the Appendix
 \ref{App:qclem}.
  \begin{lemma}\label{qclem}
  Let $Y$ be a classical stack over $\C$ and $Y=\bigsqcup_{i\in \N}Z_i$ be a partition into (nonempty) irreducible locally closed substacks. Let $X\subset Y$ be a quasi-compact constructible subset. Then there exists a finite set $J\subset \N$ such that $X\subset \bigsqcup_{i\in J}Z_i$.
  \end{lemma}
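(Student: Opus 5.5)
The plan is to argue by contradiction using Noetherian induction on the reduced closure of $X$. First I reduce to the case where $X$ is irreducible and closed in a quasi-compact open. Since $X$ is quasi-compact and constructible, it lies in some quasi-compact open substack $\mathcal{U}\subset Y$, and it is a finite union of locally closed pieces $X_1,\dots,X_m$ of $\mathcal{U}$. Each $X_k$ has finitely many irreducible components, so it suffices to treat one irreducible locally closed (hence quasi-compact, Noetherian) substack $X$ at a time. Then I set $J$ to be the union of the finite sets obtained for each piece.

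For irreducible $X$, let $\eta$ be its generic point. It lies in exactly one stratum $Z_{i_0}$, since the $Z_i$ partition the points of $Y$. The intersection $X\cap Z_{i_0}$ is locally closed in $X$ and contains $\eta$, so it contains a dense open subset $V$ of $X$.

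\emph{Step 1: $X\cap Z_{i_0}$ contains a nonempty open of $X$.} We know $Z_{i_0}$ is locally closed, so $Z_{i_0}=O\cap C$ with $O$ open and $C$ closed. Since $\eta\in C$, the closure $X$ lies in $C$. Since $\eta\in O$, the set $X\cap O$ is a nonempty open of $X$, and it lies in $Z_{i_0}$.

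\emph{Step 2: recurse.} The complement $X\setminus V$ is a closed constructible subset of $X$ of strictly smaller dimension, or more robustly a proper closed subset. Since $X$ is Noetherian (quasi-compact, locally of finite type), the process of removing generic strata terminates. Each step adds one index, and the descending chain of closed subsets stabilizes at $\emptyset$. This yields a finite $J$ with $X\subset\bigsqcup_{i\in J}Z_i$.

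The main obstacle is making sense of ``generic point'' and Noetherian induction on the topological space $|X|$ of an Artin stack. My first choice is to pass to a smooth atlas. Take a finite-type scheme $U\to\mathcal{U}$, smooth and surjective, which exists since $\mathcal{U}$ is quasi-compact. Pull back $X$ and the partition to $U$. The preimages of the $Z_i$ are locally closed and partition $U$, and $X_U$ is constructible in the Noetherian scheme $U$. Running the argument above on $X_U$ gives a finite $J$ with $X_U\subset\bigsqcup_{J}Z_{i,U}$. Surjectivity of $U\to\mathcal{U}$ then gives the same containment for $X$. On schemes, Noetherian induction and generic points are standard. The irreducibility of the $Z_i$ is not actually needed here; only local closedness and disjointness are used.
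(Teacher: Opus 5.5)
Your proof is correct, but it finds the key stratum differently from the paper. Both arguments work on a quasi-compact open or a finite-type atlas and run a Noetherian induction: find a stratum containing a dense open of an irreducible closed piece of $X$, remove it, and recurse on the proper closed complement.

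\begin{itemize}
\item The paper works with $\C$-points. It proves, via Noether normalization and the Baire category theorem on $\C^d$, that an irreducible finite-type $\C$-scheme is not a countable union of proper closed subsets. It deduces from this that some stratum is dense in $X$.
\item You observe that the pulled-back strata $Z_{i,U}$ partition all of $|U|$. So the generic point of each irreducible piece lies in a unique stratum, and local closedness then forces that stratum to contain a dense open.
\end{itemize}

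\textbf{What each route buys.} Your route is shorter and more general. It needs neither countability of the index set nor uncountability of the base field, and, as you note, irreducibility of the $Z_i$ plays no role. It does genuinely use that the $Z_i$ partition the whole space $|Y|$, non-closed points included. That is what a partition into locally closed substacks means. It also holds for the Harder--Narasimhan-type and cell partitions to which the paper applies the lemma, since those types are defined at points over any field. The Baire input is exactly what one would need if the family were only known to cover the $\C$-points. Indeed, on $|U|$ the paper's Claim is trivial, since a generic point lies in no proper closed subset.

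Your generic-point step also cleanly supplies something the paper's write-up glosses over. The inclusion $X\subseteq\overline{Z}_{\le n}$ alone does not yield a $Z_j$ with $X\cap Z_j$ dense in $X$. For instance, if $Z_0=\{x\neq 0\}\subset\A^2$ and $X=\{x=0\}$, then $X\subseteq\overline{Z_0}$ but $X\cap Z_0=\emptyset$. Your argument produces such a $Z_j$ directly.

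Two cosmetic points:
\begin{itemize}
\item Your recursion branches over the irreducible components of $X\setminus V$. So phrase termination as Noetherian induction on a minimal closed counterexample, not as a single descending chain.
\item Where you reduce to $X$ closed, simply replace $X$ by its closure in $\mathcal{U}$, which is still quasi-compact and constructible.
\end{itemize}
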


\begin{proof}[Proof of Theorem~\ref{qcthm}]
  (i) Since $p$ is proper and $Coh^{I_\sigma}_{\alpha}$ is quasi-compact, $S^{\frac{1}{2},\sigma}_{\alpha_2,\alpha_1}$ is also quasi-compact, see \cite[\href{https://stacks.math.columbia.edu/tag/04XU}{Tag 04XU}]{stacks-project}. Thus $q(\widetilde{Coh}^{\frac{1}{2},I_\sigma}_{\alpha_2,\alpha_1})$ is quasi-compact and there exists a quasi-compact substack $U\subset Coh^{\frac{1}{2}}_{\alpha_2}\times Coh^{\frac{1}{2}}_{\alpha_1}$ such that we have a factorization:
    $$\begin{tikzcd}
    \widetilde{Coh}_{\alpha_2,\alpha_1}^{\frac{1}{2}}\arrow[r,"q"] & Coh^{\frac{1}{2}}_{\alpha_2}\times Coh^{\frac{1}{2}}_{\alpha_1}\\
    \widetilde{Coh}^{\frac{1}{2},I_\sigma}_{\alpha_2,\alpha_1}\arrow[u,hook]\arrow[r,dashed] &U\arrow[u,hook].
    \end{tikzcd}$$
   Now, $(Coh^{I_{\nu}}_{\alpha_2}\times Coh^{I_\nu}_{\alpha_1})_{\nu}$ form an open exhaustion of $Coh^{\frac{1}{2}}_{\alpha_2}\times Coh^{\frac{1}{2}}_{\alpha_1}$. By Lemma \ref{qclem}, there exists $\nu_0 \in ]\frac12,1)$ such that for any $\nu\leq \nu_0$, we have $U\subset Coh^{I_{\nu}}_{\alpha_2}\times Coh^{I_{\nu}}_{\alpha_1}$.

   (ii) For any $\sF\in \Coh^{I_\sigma}_{\alpha}$ and $\sF'\subseteq \sF$ in $\Coh_{\alpha_1}^{\nu}$, we have $\sF'\in \Coh^{I_\nu}_{\alpha_1}$ and $\sF/\sF'\in \Coh^{I_\nu}_{\alpha_2}$. Indeed, for a short exact sequence in $\Coh^{\nu}$
     $$0\lrw \sF'\lrw \sF\lrw \sF/\sF' \lrw 0,$$
   we have $\phi^+(\sF')\leq \phi^+(\sF)\leq \frac{1}{2}$ and $\sigma-1\leq \phi^-(\sF)\leq \phi^-(\sF/\sF')$, where $\phi^+(\sF)$ (resp. $\phi^-(\sF)$) is the maximum (resp. minimum) among phases of HN-factors of $\sF$. Thus we have a factorization 
     $$\begin{tikzcd}
     \widetilde{Coh}_{\alpha_2,\alpha_1}^{\nu,I_\sigma} \arrow[r,"q"]\arrow[rd,dashed] &Coh^{\nu}_{\alpha_2}\times Coh^{\nu}_{\alpha_1}\\
       &Coh^{ ]\sigma-1,\nu]}_{\alpha_2} \times Coh^{I_\nu}_{\alpha_1}\arrow[u,hook].
      \end{tikzcd}$$
    Proving (ii) amounts to showing that for $\nu$ close enough to $\frac{1}{2}$, there is in fact a factorization
      $$\begin{tikzcd}
     \widetilde{Coh}_{\alpha_2,\alpha_1}^{\nu,I_\sigma} \arrow[r,"q"]\arrow[rd,dashed] &Coh^{\nu}_{\alpha_2}\times Coh^{\nu}_{\alpha_1}\\
       &Coh^{I_\sigma}_{\alpha_2} \times Coh^{I_\nu}_{\alpha_1}\arrow[u,hook].
      \end{tikzcd}$$
    Note that $Coh^{]\sigma-1,\nu]}_{\alpha_2}\subseteq Coh^{\sigma}_{\alpha_2}$, whose HN-stratification $Coh^{\sigma}_{\alpha_2}=\bigsqcup_{\underline{\alpha_2}} HN^{\sigma}_{\underline{\alpha_2}}$ is finite. Denote by $\nu'_0$ the minimum of phases of these HN-factors strictly greater than $\frac{1}{2}$, and $\Sigma:=\{\underline{\alpha_2}\in \Seq(\alpha_2)|\ \phi((\underline{\alpha_2})_i\geq \nu'_0 \text{ for some i}\}$, then we have   $$Coh^{\sigma}_{\alpha_2}=Coh^{I_\sigma}_{\alpha_2}\sqcup \bigsqcup_{\underline{\alpha_2}\in\Sigma} HN^{\sigma}_{\underline{\alpha_2}}$$
    and it follows that
    $$Coh_{\alpha_2}^{]\sigma-1,\nu]}=Coh_{\alpha_2}^{\nu} \cap Coh_{\alpha_2}^{\sigma}\subseteq Coh_{\alpha_2}^{I_\nu}$$
    for any $\frac{1}{2}<\nu<\nu'_0$.
  \end{proof}

 Recall that we have set $\nu_n:=\phi(\sO(-n))+1$ and put $I_n:=]\nu_n-1,\frac{1}{2}]$. We have the following corollary of Theorem~\ref{qcthm}.
\begin{corollary}\label{cor:QuasiCompact}
  Fix $\alpha\in (\Z^2)^+$ and $m \in \Z$.
  \begin{enumerate}
    \item [(i)] For any $\ua\in \Seq^{\frac{1}{2}}(\alpha)$ there exists $n_0\in \N$ such that for any $n\geq n_0$, we have a canonical isomorphism
      $${\Psi}^{\nu_n,I_m}_{\underline{\alpha}}: \widetilde{Coh}_{\underline{\alpha}}^{\frac{1}{2},I_{m}}\cong \widetilde{Coh}_{\underline{\alpha}}^{\nu_n,I_{m}}.$$
    \item [(ii)] For any $\ua,\ua' \in \Seq^{\frac12}(\alpha)$ there exists $n_0$ such that for any $n>n_0$ there is a canonical isomorphism
      $$\Psi^{\nu_n,I_m}_{\ua,\ua'}: Z^{\frac12, I_m}_{\ua,\ua'} \simeq Z^{\nu_n, I_m}_{\ua,\ua'}\ .$$
  \end{enumerate}
\begin{proof}
  We prove the first statement by induction on the length of $\ua$. For any $\ua=(\alpha_2,\alpha_1)\in \Seq^{\frac{1}{2}}(\alpha)$, the statement follows from Theorem \ref{qcthm}.  
  For $\ua=(\alpha_{s},\cdots,\alpha_1)$ with $s\geq 3$, consider $\ua=(\alpha_s,|\ua'|)$ with $\ua'=(\alpha_{s-1},\cdots,\alpha_{1})$. Then there exists $n_0\geq m$ such that for any $n\geq n_0$, we have an isomorphism
    $${\Psi}^{\nu_n,I_m}_{\ua'}: \widetilde{Coh}^{\frac{1}{2},I_m}_{(\alpha_s,|\ua'|)} \cong \widetilde{Coh}^{\nu_n,I_m}_{(\alpha_s,|\ua'|)}.$$
  As a result, for any $(F_{s-1}\subseteq F_s)\in \widetilde{Coh}^{\frac{1}{2},I_m}_{(\alpha_s,|\ua'|)} $, we have $F_{s-1}\in \widetilde{Coh}^{\nu_{n_0},I_{n_0}}_{|\ua'|}$. There are canonical isomorphisms
    $$\widetilde{Coh}^{\frac{1}{2},I_m}_{\ua}\cong \widetilde{Coh}^{\frac{1}{2},I_{n_0}}_{\ua'}\times_{Coh^{I_{n_0}}_{|\ua'|}} \widetilde{Coh}^{\frac{1}{2},I_m}_{(\alpha_s,|\ua'|)},$$
    $$F_{\bdot}\mapsto ((0\subseteq F_1\subseteq \cdots\subseteq F_{s-1}),(F_{s-1}\subseteq F_{s})),$$
  and 
    $$\widetilde{Coh}^{\nu_n,I_m}_{\ua}\cong \widetilde{Coh}^{\nu_n,I_{n_0}}_{\ua'}\times_{Coh^{I_{n_0}}_{|\ua'|}} \widetilde{Coh}^{\nu_n,I_m}_{(\alpha_s,|\ua'|)},$$
    $$F_{\bdot}\mapsto ((0\subseteq F_1\subseteq \cdots\subseteq F_{s-1}),(F_{s-1}\subseteq F_{s}))$$
  where $n\geq n_0$. By the induction hypothesis for $\ua'$, there is $n_1\geq n_0$ such that for any $n\geq n_1$, we have an isomorphism
    $${\Psi}^{\nu_n,I_{n_0}}_{\ua'}: \widetilde{Coh}^{\frac{1}{2},I_{n_0}}_{\ua'} \cong \widetilde{Coh}^{\nu_n,I_{n_0}}_{\ua'},$$
  which leads to the desired isomorphism
    $$ {\Psi}^{\nu_n,I_m}_{\ua}: \widetilde{Coh}^{\nu_n,I_m}_{\ua}\cong \widetilde{Coh}^{\frac{1}{2},I_m}_{\ua}.$$

  Statement (ii) follows from (i) and the fact that for any $I$
  \[Z_{\ua,\ua'}^{\nu,I}= \widetilde{Coh}^{\nu,I}_{\ua}\times_{Coh^{\nu,I}_{\alpha}}\widetilde{Coh}^{\nu,I}_{\ua'}.\]
\end{proof}
\end{corollary}

\medskip
\subsection{Partitions of Steinberg stacks}
  In this section, we will describe several types of partitions and open exhaustions of the Steinberg stacks as well as their intersections. This will be used later to study their Borel-Moore homology and the structure of convolution algebra which one can put on the direct sum of these Borel-Moore homology spaces.

  \subsubsection{The cell partition}\label{sec:cellpartition} This is the standard partition, whose parts in the case of the usual, full flag Springer resolution are parametrized by the Weyl group.
  
  Let $\ua,\ua' \in \Seq^\nu(\alpha)$ be of length $l,s$. To $(F_{\bdot},F'_{\bdot})\in Z^{\nu}_{\ua,\ua'}$ we associate the matrix $w(F_{\bdot},F'_{\bdot})$ whose (i,j)-term is the class in $\Z^2$ of the subquotient 
      \begin{equation}\label{eq:def_FF'ij}(F_{\bdot},F'_{\bdot})_{i,j}:=F_i\cap F'_j\big/ (F_{i-1}\cap F'_j+F_i\cap F'_{j-1})
      \end{equation}
  where the intersection and quotient is taken in the abelian category $\Coh^{\nu}$.
  We denote by $W(\ua,\ua')$ the set of $(s\times l)$-matrices $w$ with entries $w_{ij}\in\Z^2$ such that 
    $$\sum_{i=1}^s w_{ij}=\alpha'_j \text{  and  }\sum_{j=1}^l w_{ij}=\alpha_i\ ,$$
  and by $W^\nu(\ua,\ua')\subset W(\ua,\ua')$ the subset of matrices $w$ as above with entries in $K_0(\Coh^\nu)^+$. Note that, again,  $W^\nu(\ua,\ua')$ depends on $\nu$. We will say that a pair $(F_{\bdot},F'_{\bdot})\in Z_{\ua,\ua'}^{\nu}$ is of type $w$ if $(F_{\bdot},F'_{\bdot})\in Z_{\ua,\ua'}^{\nu}(w)$.

  For $w\in W^\nu(\ua,\ua')$, let $Z^{\nu}_{\ua,\ua'}(w)$ be the constructible subset of $Z^{\nu}_{\ua,\ua'}$ consisting of pairs $(F_{\bdot},F'_{\bdot})$ of type $w$. 
  There is a canonical morphism
    $$\pi_{w}:Z^\nu_{\ua,\ua'}(w) \to \prod_{i,j} Coh^\nu_{w_{i,j}}, \qquad (F_{\bdot}, F'_{\bdot}) \mapsto \left((F_{\bdot}, F'_{\bdot})_{i,j}\right).$$

  The following is classical (and holds verbatim for any smooth projective curve). 

\begin{lemma}\label{lem:dimStrata}
  The morphism $\pi_w$ is an iterated stack vector bundle of rank $$r(w):=-\sum_{(i,j)<(i',j')} \langle w_{i'j'}, w_{ij}\rangle\ ,$$ where $(i,j) \leq (i',j')$ if and only if $i \leq i'$ and $j \leq j'$. In particular, $Z^\nu_{\ua,\ua'}(w)$ is smooth irreducible, well approximated by quasi-compact opens, cohomologically pure and of dimension 
   $$\dim(Z_{\ua,\ua'}^\nu(w))=-\sum_{(i,j)\leq(i',j')} \langle w_{i'j'}, w_{i,j}\rangle\ .$$
\end{lemma}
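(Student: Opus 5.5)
We argue by induction on the size $s\cdot l$ of the matrix $w$, peeling off one filtration step at a time, in the manner of the classical proof for Steinberg varieties of quivers. The rank formula is an Euler-characteristic computation carried along the same induction. The last filtration step $F_s$ is the whole object $F$, so the pair $(F_{\bdot},F'_{\bdot})$ is determined by three pieces of data:
\begin{itemize}
\item the pair of induced filtrations $(F_{\bdot}\cap F_{s-1}, F'_{\bdot}\cap F_{s-1})$ on $F_{s-1}$;
\item the induced filtration $(F'_j+F_{s-1})/F_{s-1}$ on the quotient $F/F_{s-1}$;
\item the extension class gluing these two.
\end{itemize}
Since $\Coh^\nu$ is abelian, we can also organise the data through the subquotients $G_{ij}:=(F_{\bdot},F'_{\bdot})_{i,j}$. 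A pair of type $w$ is the same as an object carrying a compatible ``grid'' filtration indexed by the poset $\{(i,j)\}$ with partial order $\leq$, whose graded pieces are $G_{ij}$ of class $w_{ij}$. The key point to check is that the sum of the images of $F_i\cap F'_j$ is controlled by this grid: two filtrations on one object generate a distributive lattice of subobjects. By Zassenhaus/Schreier this holds for two chains in any abelian category, so the grid filtration is well defined and the lattice is determined.

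We then order the index set $\{(i,j)\}$ by a linear extension of $\leq$ and rebuild the object one piece at a time. Adding the piece $G_{i'j'}$ to the subobject $H$ built from the pieces preceding it amounts to choosing an extension $0\to H_{<(i',j')}\to ? \to G_{i'j'}\to 0$. The point is that the relevant part of $H$ is the one generated by the pieces $G_{ij}$ with $(i,j)<(i',j')$, not the whole previous step in the linear order. The fiber of the forgetful map is then the stack quotient
\[
\mathrm{Ext}^1(G_{i'j'},H_{<(i',j')})\,/\,\mathrm{Hom}(G_{i'j'},H_{<(i',j')}).
\]
Because $\Coh^\nu$ has homological dimension one, $\mathrm{Ext}^2=0$. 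Hence this is a stack vector bundle of virtual rank $-\langle G_{i'j'},H_{<(i',j')}\rangle$, where $\langle-,-\rangle$ is the Euler form.

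The main obstacle is justifying that the type-$w$ condition is exactly that these extensions are ``free'' in a precise sense. We must show that the gluing data for pieces not comparable under $\leq$ is absent, and that no hidden open or closed condition cuts down the fibre so that it stays a full vector bundle stack. Concretely, we need to show the strata correspond bijectively to such iterated extension data. I would try first to follow the classical argument: by induction on $s$, reduce to $s=2$, where $F_1\subset F$ and $F'_{\bdot}$ restricts to $F_1$ and to $F/F_1$. The fibre over fixed induced data is then $\mathrm{Ext}^1(G_{2\ast},G_{1\ast})$ relative to the filtrations. This equals $\mathrm{Ext}^1$ in the category of filtered objects, which again has no $\mathrm{Ext}^2$ because the pieces are assembled in a strict lattice. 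Summing the ranks over all pairs gives $r(w)=-\sum_{(i,j)<(i',j')}\langle w_{i'j'},w_{ij}\rangle$.

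The remaining claims follow from the structure of $\pi_w$. The target $\prod_{ij}Coh^\nu_{w_{ij}}$ is smooth and irreducible, and it is well approximated and cohomologically pure by Lemma~\ref{lem:CohoPure} together with its $\Coh^\nu\simeq\mathrm{mod}\,\C Q$ analogue. Iterated vector bundle stacks preserve all of these properties: pull back a quasi-compact exhaustion of the target, and use the fact that cohomology is unchanged along vector bundle stacks. Finally, the dimension is the sum of $\dim Coh_{w_{ij}}=-\langle w_{ij},w_{ij}\rangle$ over the diagonal terms and $r(w)$, which gives the stated formula.
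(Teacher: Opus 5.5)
Your strategy is sound and your rank count is right, but the step that carries the lemma is only announced, not proved.

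\textbf{Comparison with the paper.} The paper gives no proof; it calls the lemma classical. The argument it has in mind is visible in the proof of Proposition~\ref{prop:purity local}. There $\pi_w$ is factored as $Z(w)\to\prod_i\widetilde{Coh}_{\underline{\gamma}^{(i)}}\to\prod_{i,j}Coh_{w_{ij}}$, where the first map records the filtrations induced by $F'_{\bdot}$ on $F_i/F_{i-1}$, and its fibres are stacks of filtered extensions. That is your fallback row-by-row route.

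Your primary route refines it. You glue one subquotient $G_{i'j'}$ at a time along a linear extension of the product order. Each step is then an ordinary extension of $G_{i'j'}$ by $M_{i'j'}:=F_{i'-1}\cap F'_{j'}+F_{i'}\cap F'_{j'-1}$ inside the hereditary category $\Coh^\nu$. So no filtered Ext groups are needed, and the contributions $-\langle w_{i'j'},\sum_{(i,j)<(i',j')}w_{ij}\rangle$ visibly sum to $r(w)$. What the paper's route buys instead is the intermediate factorization through $\prod_i\widetilde{Coh}_{\underline{\gamma}^{(i)}}$, which is exactly what is used later for the purity of $Y^{I}_{\ua,\ua'}(w)$.

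\textbf{The missing step.} You flag the ``freeness'' of each step as the main obstacle but do not supply it. It is short.
\begin{itemize}
\item Let $D$ be the set of indices preceding $(i',j')$ in the linear order; it is a down-set containing every $(i,j)<(i',j')$. Put $H_D=\sum_{(i,j)\in D}F_i\cap F'_j$ and $X=F_{i'}\cap F'_{j'}$.
\item Distributivity gives $H_D\cap X=M_{i'j'}$. Hence the next stage is forced to be the pushout $H_D\sqcup_{M_{i'j'}}X$.
\item Conversely, start from an arbitrary extension $0\to M_{i'j'}\to X\to G_{i'j'}\to 0$. For down-sets $\lambda\ni(i',j')$, set $\lambda'=\lambda\setminus\{(i',j')\}$ and $H_\lambda:=H_{\lambda'}+X$ in the pushout.
\item The modular law gives $(H_{\lambda'}+X)\cap H_D=H_{\lambda'}+M_{i'j'}=H_{\lambda'}$, since $M_{i'j'}\subseteq H_{\lambda'}$. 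One more application handles $\lambda,\mu$ both containing $(i',j')$. So all relations $H_\lambda\cap H_\mu=H_{\lambda\cap\mu}$ hold.
\item The subquotients are then the prescribed $G_{ij}$, so the type is automatically $w$.
\end{itemize}
So there is no hidden open or closed condition, and the fibre is exactly $[\mathrm{Ext}^1(G_{i'j'},M_{i'j'})/\Hom(G_{i'j'},M_{i'j'})]$.

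\textbf{The fallback route.} The claim ``filtered objects have no $\mathrm{Ext}^2$ because the pieces form a strict lattice'' is not an argument. Chains in a hereditary category do have nonzero $\mathrm{Ext}^2$ in general. What is true is that $\mathrm{Ext}^2(A_{\bdot},B_{\bdot})=0$ when $A_{\bdot}$ is a filtration. Such an $A_{\bdot}$ is an iterated extension of chains $P_j(X)=(0\to\cdots\to 0\to X\xrightarrow{\mathrm{id}}X\to\cdots)$, and $\mathrm{Ext}^k(P_j(X),B_{\bdot})=\mathrm{Ext}^k(X,B_j)$. This also yields $\chi(A_{\bdot},B_{\bdot})=\sum_j\langle \mathrm{gr}_jA,B_j\rangle$, which is the rank you need.

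A minor point: distributivity of the lattice generated by two chains is Birkhoff's theorem on modular lattices, not Zassenhaus/Schreier.
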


By construction, $Z^{\nu}_{\ua,\ua'}$ has a partition into constructible subsets
  \begin{equation}\label{eq:Partition}
  Z^{\nu}_{\ua,\ua'}=\bigsqcup_{w\in W^\nu(\ua,\ua')} Z^{\nu}_{\ua,\ua'}(w).    
  \end{equation}
  
  For $\nu\not\in \frac{1}{2}+\Z$, we have $\Coh^\nu \simeq \text{mod}\C Q$, hence the set $W^\nu(\ua,\ua')$ is finite because the dimension vectors of quiver subrepresentations of representations of a fixed dimension form a bounded set. It is easy to see that in general, $W^{\nu}(\ua,\ua')$ is infinite when $\nu \in \frac12 + \Z$.

\medskip

\subsubsection{The global open exhaustion.}
  For any \textit{strict} subinterval $I\subset ]\nu-1,\nu]$ (i.e. of length $<1$), the stack $Z^{\nu,I}_{\ua,\ua'}$ is quasi-compact by Lemma \ref{LemOpenness} and \cite[\href{https://stacks.math.columbia.edu/tag/04XU}{Tag 04XU}]{stacks-project}. We deduce in particular that
the Steinberg stack $Z^{\nu}_{\ua,\ua'}$ admits a quasi-compact open exhaustion
    \begin{equation}\label{eq:OpenEx}
    Z^{\nu}_{\ua,\ua'}=\bigcup_{I} Z^{\nu,I}_{\ua,\ua'}.    
    \end{equation}
   
  Let us set
   $$ W(\ua,\ua')^{\nu,I}:=\{w\in W(\ua,\ua')^{\nu}|\ Z^{\nu,I}_{\ua,\ua'}(w)\neq \varnothing\}.$$
  The partition (\ref{eq:Partition}) of $Z^{\nu}_{\ua,\ua'}$ induces a similar partition $Z^{\nu,I}_{\ua,\ua'}$ into constructible subsets
  $$Z^{\nu,I}_{\ua,\ua'}=\bigsqcup_{w\in W(\ua,\ua')^{\nu,I}} Z^{\nu,I}_{\ua,\ua'}(w)\ .$$
  
\begin{lemma}\label{lem:finiteness}
  For any $\nu$ and any strict subinterval $I \subset ]\nu-1,\nu]$, the set $W(\ua,\ua')^{\nu,I}$ is finite.
\end{lemma}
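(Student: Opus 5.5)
The plan is to show that the set of classes occurring as subquotients $(F_{\bdot},F'_{\bdot})_{i,j}$ of objects $F\in\Coh^I_\alpha$ is finite. There are two natural routes. The first uses Lemma~\ref{qclem} applied to the partition \eqref{eq:Partition}. The second, which I would use, is a direct boundedness argument.

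\textbf{First route and its gap.} By Lemma~\ref{LemOpenness}, $Coh^I_\alpha$ is quasi-compact. Since $Z^\nu_{\ua,\ua'}\to Coh^\nu_\alpha$ is proper, $Z^{\nu,I}_{\ua,\ua'}$ is quasi-compact. The strata $Z^\nu_{\ua,\ua'}(w)$ are irreducible and locally closed by Lemma~\ref{lem:dimStrata}, and $W^\nu(\ua,\ua')$ is countable. Lemma~\ref{qclem} would then give that only finitely many strata meet $Z^{\nu,I}_{\ua,\ua'}$. The problem is that Lemma~\ref{qclem} requires $Y=\bigsqcup Z_i$ to be a partition of the ambient stack into locally closed pieces. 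Local closedness of the individual strata is essentially Lemma~\ref{lem:dimStrata}, but checking the hypotheses on the whole partition is more delicate. So I would give the direct argument below.

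\textbf{Reduction to the case $\Coh^\nu\simeq\mathrm{mod}\,\C Q$.} When $\nu\notin\frac12+\Z$, the heart $\Coh^\nu$ is equivalent to $\mathrm{mod}\,\C Q$ by Remark~\ref{rem:tilting} (after a shift). In that case $W^\nu(\ua,\ua')$ is already finite, since the entries $w_{ij}$ are dimension vectors in $\N^2$ of subquotients of a fixed-dimensional representation, and there is nothing to prove. So assume $\nu\in\frac12+\Z$; after shifting, $\Coh^\nu=\Coh\P^1$ and $I\subsetneq]-\frac12,\frac12]$.

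\textbf{Bound via a quiver heart.} Since $I$ is a strict subinterval, it lies in $]\nu'-1,\nu']$ for some $\nu'$ with $\Coh^{\nu'}\simeq\mathrm{mod}\,\C Q$; this is the argument at the end of Lemma~\ref{LemOpenness}. The key point is that each subquotient $(F_{\bdot},F'_{\bdot})_{i,j}$ of $F\in\Coh^I$, computed in $\Coh^\nu$, is a subquotient of $F$ inside $\Coh^\nu$. Its HN phases are constrained as in the proof of Theorem~\ref{qcthm}(ii): subobjects satisfy $\phi^+\le\phi^+(F)$ and quotients satisfy $\phi^-\ge\phi^-(F)$. This does not by itself make subquotients lie in $\Coh^I$.

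\textbf{Main obstacle and how to handle it.} The hardest step is bounding the classes directly. I would argue as follows. For $F$ in $Coh^I_\alpha$, the quasi-compactness of $Coh^I_\alpha$ implies that the HN types are finite in number, so $\mu_{\min}(F)\ge -N$ and $\mu_{\max}(F)\le N$ for some $N$ depending only on $I$ and $\alpha$.

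Now take a subquotient $G=A/B$ with $B\subset A\subset F$. Its rank is between $0$ and $\mathrm{rk}\,\alpha$. Its degree is bounded above because $\deg A\le \mathrm{rk}(A)\,N$ and $\deg B\ge$ a bound coming from $\mu_{\min}$. More precisely, every subsheaf $B$ of $F$ satisfies $\deg B\le \mathrm{rk}(B)\,N$ (plus torsion bounded by $\deg$ terms), and $\deg(F/B)\ge -N\,\mathrm{rk}$.

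The step that remains is a two-sided degree bound. The relation $\deg G=\deg A-\deg B$ gives an upper bound only from above on $\deg A$. Instead, I would use the row and column sums: each $w_{ij}$ has every partial sum along rows and columns equal to the class of a sub- or quotient object of $F$, whose degrees are bounded on both sides by $\pm N\,\mathrm{rk}\,\alpha$. Combined with the fact that the entries lie in the positive cone $(\Z^2)^+$, this gives two-sided bounds on all $w_{ij}$. Finitely many integer matrices remain, which proves the lemma.
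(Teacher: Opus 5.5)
Your route differs from the paper's. The paper's proof is exactly your ``first route'': $Z^{\nu,I}_{\ua,\ua'}$ is quasi-compact, being proper over the quasi-compact $Coh^I_\alpha$, and Lemma~\ref{qclem} applied to the partition \eqref{eq:Partition} shows that only finitely many strata meet it. The quiver case is dismissed as obvious. A direct boundedness argument is a legitimate alternative. Your caution about the hypotheses of Lemma~\ref{qclem} is also not baseless: at $\nu=\frac12$, the written proof of Lemma~\ref{lem:OpennessStrata} passes through Proposition~\ref{prop:strata}, which itself invokes the present lemma. However, the step you identify as the hardest is not correctly argued.

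The gap is your assertion that sub- and quotient objects of $F$ have degrees bounded on both sides. This is false. For $F=\sO\oplus\sO\in Coh^I_{(2,0)}$ with $0\in I$, the subsheaves $\sO(-n)$ have degree $-n$ for every $n\ge 0$, and dually the quotients $\sO(n)$ have unbounded degree. There are two further problems. The row partial sums $\sum_{j'\le j}w_{ij'}$ are classes of $(F_{i-1}+F_i\cap F'_j)/F_{i-1}$, which are subquotients rather than sub- or quotient objects of $F$. And membership of the entries in $(\Z^2)^+$ gives no lower bound on the degree of an entry of positive rank. So, as written, you have no lower bound on degrees, which is exactly what finiteness requires.

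The missing idea is to use the sum of the two flags as well as their intersection. Since $Coh^I_\alpha$ has only finitely many HN types, there is a constant $C$ such that $\deg B\le C$ for every subsheaf $B$ of every $F\in Coh^I_\alpha$. To see this, use $\deg(B/B_{tor})\le \mathrm{rk}(B)\,\mu_{\max}(F/F_{tor})$ and $\deg B_{tor}\le\deg F_{tor}$. Apply this bound to $F_i\cap F'_j$ and to $F_i+F'_j$, and use that $[F_i]$ and $[F'_j]$ are fixed by $\ua,\ua'$. This gives
\[\deg F_i+\deg F'_j-C\;\le\;\deg(F_i\cap F'_j)\;\le\;C ,\]
while the ranks lie in $[0,\mathrm{rk}\,\alpha]$. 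Hence the matrix $\bigl([F_i\cap F'_j]\bigr)_{i,j}=\bigl(\sum_{i'\le i,\,j'\le j}w_{i'j'}\bigr)_{i,j}$ takes only finitely many values. You recover $w$ from it by inclusion--exclusion, because $(F_{i-1}\cap F'_j)\cap(F_i\cap F'_{j-1})=F_{i-1}\cap F'_{j-1}$.

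In matrix terms, the lower bound comes from the quotient $F/(F_i+F'_j)$, whose class is the complementary rectangle sum $\sum_{i'>i,\,j'>j}w_{i'j'}$, not from row or column partial sums. With this repair your argument is complete and more elementary than the paper's, since it needs neither Lemma~\ref{qclem} nor the local closedness of the strata.
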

\begin{proof} 
This follows from the fact that $Z^{\nu,I}_{\ua,\ua'}$ is quasi-compact and Lemma~\ref{qclem}. Note that for $\nu\not\in \frac{1}{2}+\Z$, this is obvious as $W(\ua,\ua')^{\nu,I}\subset W(\ua,\ua')^{\nu}$, which is already finite. 
\end{proof}

Recall that for $\sigma \in ]\frac12,1]$ we have set $I_\sigma=]\sigma-1,\frac12]$. The following is a key technical result.

\begin{proposition}\label{prop:purity_prop}
For any $\sigma \in ]\frac12,1]$ and any $\ua,\ua' \in \Seq^{\frac12}(\alpha)$, the stack $Z^{\frac12,I_\sigma}_{\ua,\ua'}$ is cohomologically pure.
\end{proposition}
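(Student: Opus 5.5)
The plan is to combine the finite cell partition of $Z^{\frac12,I_\sigma}_{\ua,\ua'}$ with Lemma~\ref{L:purity}, applied repeatedly along a closed filtration. Since $I_\sigma$ is a strict subinterval of $]-\frac12,\frac12]$, the stack $Z^{\frac12,I_\sigma}_{\ua,\ua'}$ is quasi-compact. By Lemma~\ref{lem:finiteness}, it is covered by finitely many pieces
\[
Z^{\frac12,I_\sigma}_{\ua,\ua'}(w)=Z^{\frac12}_{\ua,\ua'}(w)\cap Z^{\frac12,I_\sigma}_{\ua,\ua'},\qquad w\in W(\ua,\ua')^{\frac12,I_\sigma}.
\]
Next I would order these finitely many pieces compatibly with the closure relation, for instance by dimension, or by the semicontinuous invariants $\dim\Hom$ of the subquotients $F_i\cap F'_j$. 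This gives a filtration by closed substacks whose successive differences are single pieces. Applying Lemma~\ref{L:purity} inductively along this filtration reduces the proposition to one claim: each individual open piece $Z^{\frac12,I_\sigma}_{\ua,\ua'}(w)$ is cohomologically pure. Since the stack is quasi-compact, we may take the constant exhaustion.

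For a single piece, Lemma~\ref{lem:dimStrata} says that the full stratum $Z^{\frac12}_{\ua,\ua'}(w)$ is an iterated vector bundle over $\prod_{i,j}Coh_{w_{ij}}$. The difficulty is that the condition ``$F_s\in Coh^{I_\sigma}_\alpha$'' is imposed on the total object, not on the factors $w_{ij}$. So the open piece is not simply the preimage of an open subset of $\prod Coh_{w_{ij}}$. This is the main obstacle.

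My first attempt would use a Harder--Narasimhan type stratification. Write $Coh^{I_\sigma}_\alpha$ as a finite union of HN strata $HN_{\underline\beta}$ of $Coh_\alpha$; each is a vector bundle over products of semistable stacks and is pure. Then intersect: for pairs of filtrations of a sheaf with fixed HN type, refine further by the HN types of all the subquotients $(F_\bdot,F'_\bdot)_{ij}$ together with the way the HN filtration of $F$ meets $F_\bdot$ and $F'_\bdot$. One would hope that each resulting cell is an iterated affine/vector bundle stack over a product of semistable stacks $Coh^{ss}_{\gamma}$, which are pure (classifying stacks of $GL_r$, or torsion stacks). Then Lemma~\ref{L:purity} closes the argument again, provided the cells can be ordered to form a closed filtration. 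This ordering is routine via HN polygons.

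An alternative for this step, which I would try if the fibration structure fails, is to transport the problem via Corollary~\ref{cor:QuasiCompact}. This identifies $Z^{\frac12,I_\sigma}_{\ua,\ua'}$ with the open $Z^{\nu_n,I_\sigma}_{\ua,\ua'}$ inside a Kronecker-quiver Steinberg stack for $n\gg0$. For quivers, the strata $Z^{\nu_n}(w)$ are vector bundles over products of $Rep_{w_{ij}}Q$. The open condition $I_\sigma$ is cut out by finitely many semicontinuous $\Hom$-vanishing conditions (Lemma~\ref{LemOpenness}). Its complement is stratified by finitely many HN strata with respect to the stability of phase, so purity of the open part follows by taking the quotient in the exact sequence of Lemma~\ref{L:purity}, once the complement is shown to be pure and closed-stratified. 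Either way, the crux is proving purity of the stratum $w$ restricted to the HN-bounded region, and I expect the HN-refined cell decomposition to be where the real work lies.
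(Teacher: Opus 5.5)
There is a genuine gap. Your reduction, using the cell partition ordered by a linear refinement of $\le$ (Lemma~\ref{lem:OpennessStrata}) together with Lemma~\ref{L:purity}, is valid. But it replaces the proposition by the strictly stronger claim that every piece $Z^{\frac12,I_\sigma}_{\ua,\ua'}(w)$ is cohomologically pure. That is exactly what the paper says, at the start of Section~\ref{sec:LocalOpenExhaustion}, the authors do not know how to prove. It is the reason the exhaustion by $Y^{\frac12,I_\sigma}_{\ua,\ua'}$, with the open condition imposed on the factors $F_i/F_{i-1}$, is introduced later.

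Neither of your suggestions closes this. The HN refinement is only a hope. Recording how the HN filtration of $F$ meets both $F_\bdot$ and $F'_\bdot$ is a relative-position problem for three filtrations, which already for vector spaces has continuous moduli in general. You give no argument that the refined pieces are iterated vector bundles over products of semistable stacks. The quiver route misuses Lemma~\ref{L:purity}: that lemma takes purity of both the closed and the open part as hypotheses. Purity of an ambient stack and of a closed substack does not give purity of the open complement. For example, for $\C^*=\P^1\setminus\{0,\infty\}$, $\text{H}_1(\C^*)$ injects into $\text{H}_0$ of the two points and so has the wrong weight. You would need surjectivity of $\text{H}_*(Z^{\nu_n}_{\ua,\ua'}(w))\to\text{H}_*(Z^{\nu_n,I_\sigma}_{\ua,\ua'}(w))$, which is the original difficulty again.

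The missing idea is to stratify by the isomorphism type of the total sheaf $F$ rather than by the relative position $w$, so that the open condition lives entirely on the base.

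\begin{itemize}
\item After reducing to $I_\sigma=I_m$, the paper uses the finite partition $Coh^{I_m}_\alpha=\bigsqcup_{\sV,d}X_{\sV,d}$. Here $X_{\sV,d}$ parametrizes sheaves isomorphic to $\sV\oplus\sT$, with $\sV$ a vector bundle generated by $\sO(m)$ and $\sT$ torsion of degree $d$. It then shows each $Z^{\frac12,I_m}_{\ua,\ua'}\times_{Coh_\alpha}X_{\sV,d}$ is pure.
\item Splitting each filtration into its intersection with $\sT$ and the induced filtration on the torsion-free quotient gives vector bundle stacks, which reduce the problem to $d=0$.
\item Over $X_{\sV,0}=B\mathrm{Aut}(\sV)$, the stack is an $\mathrm{Aut}(\sV)$-quotient of a product of projective iterated Quot schemes $\mathrm{Quot}(\underline\gamma,\sV)$.
\item These are shown pure by a Bialynicki-Birula decomposition for the cocharacter $t\mapsto(t^{-d_1},\dots,t^{-d_r})$ of $\mathrm{Aut}(\sV)$. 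The fixed components are products of $S^k\P^1\simeq\P^k$. The attracting sets are smooth, being images of cells $Z^{\frac12,\circ}_{\underline\gamma,\underline\sigma}(\mathbf b)$ under a $G/B$-fibration, and hence are affine fibrations over the fixed loci.
\end{itemize}
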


\begin{proof} Thanks to Remark~\ref{rem:discrete_phases}, it suffices to consider the cases $I_\sigma=I_m$ for $m \geq 0$. If $\sV$ is vector bundle on $\P^1$ and $d\geq 0$ we denote by $i_{\sV,d}: X_{\sV,d} \to Coh_{\alpha}$ the locally closed substack parametrizing coherent sheaves isomorphic to $\sV \oplus \sT$ where $\sT \in Coh_{d\delta}$. Note that $Coh^{I_m}_\alpha$ admits a partition
$$Coh^{I_m}_\alpha=\bigsqcup_{\sV, d} X_{\sV,d}$$
where the union ranges over the \textit{finite} set of pairs $(\sV,d)$ for which $\sV$ is a vector bundle of class $\alpha-d\delta$ generated by $\sO(m)$. We will prove Proposition~\ref{prop:purity_prop} by showing that for any $(\sV,d)$ the stack
$$Z^{\frac12,I_m}_{\ua,\ua'}\underset{Coh_\alpha}{\times} X_{\sV,d}$$
is cohomologically pure. 

We will first reduce ourselves to the case $d=0$. For any $\underline{\sigma}=(\sigma_s, \ldots, \sigma_1) \in \Seq^{\frac12}_\alpha$,  there is a partition
$$\widetilde{Coh}^{\frac12}_{\underline{\sigma}} \underset{Coh_\alpha}{\times} X_{\sV,d}=\bigsqcup_{\underline{c}} K^{\underline{c}}_{\underline{\sigma},\sV}$$
where $\underline{c}$ runs through the set of compositions of $d$ of length $s$ and where
$$K^{\underline{c}}_{\underline{\sigma},\sV}=\{\sF_1 \subset \sF_2 \subset \cdots \subset \sF_s=\sV \oplus \sT\;|\;\deg(\sF_i \cap \sT /\sF_{i-1} \cap \sT )= c_i\;\forall\;i\}.$$
Setting $\underline{\sigma}-\underline{c}=(\sigma_i -c_i\delta)_i \in \Seq^{\frac12}(\alpha-d\delta)$ and $\underline{c}\delta=(c_i\delta)_i\in \Seq^{\frac12}(d\delta)$, the assignment
$$(\sF_1 \subset \cdots \subset \sF_s) \mapsto \left((\sF_1/\sF_1 \cap \sT \subset \cdots \subset \sF_s / \sF_s \cap \sT), (\sF_1 \cap \sT \subset \cdots \subset \sF_s \cap \sT) \right)$$
induce a morphism
$$K^{\underline{c}}_{\underline{\sigma},\sV} \longrightarrow \left(\widetilde{Coh}^{\frac12}_{\underline{\sigma}-\underline{c}} \underset{Coh_{\alpha-d\delta}}{\times} X_{\sV,0}\right) \times \widetilde{Coh}^{\frac12}_{\underline{c}\delta} $$
which is easily seen to be a vector bundle stack. For similar reasons, the induced map
$$K^{\underline{c}}_{\underline{\sigma},\sV} \underset{X_{\sV,d}}{\times} K^{\underline{c}'}_{\underline{\sigma}',\sV} \longrightarrow \left( Z^{\frac12}_{\underline{\sigma}-\underline{c}, \underline{\sigma}'-\underline{c}'} \underset{Coh_{\alpha-d\delta}}{\times} X_{\sV,0} \right) \times Z^{\frac12}_{\underline{c}\delta, \underline{c}'\delta}$$
is a vector bundle stack.

Observe that $X_{\sV,0}=B\text{Aut}(\sV)$ and that $\widetilde{Coh}^{\frac12}_{\underline{\gamma}} \times_{Coh_{\alpha-d\delta}} X_{\sV,0}$ is isomorphic to the stack quotient of the iterated quot scheme
$$\text{Quot}(\underline{\gamma},\sV) := \{\sH_1 \subset \cdots \subset \sH_s=\sV\;|\; [\sH_i/\sH_{i-1}]=\gamma_i\; \forall i\}$$
by $\text{Aut}(\sV)$. It's thus enough to show

\begin{lemma}\label{lem:hyperquot is pure}
For any $\underline{\gamma} \in \Seq^{\frac12}(\alpha-d\delta)$ the projective scheme $\text{Quot}(\underline{\gamma},\sV)$ is cohomologically pure.
\end{lemma}
\begin{proof}
Fix a direct sum decomposition $\sV=\bigoplus_{i=1}^r \sO(d_i)$ with $d_1 \geq d_2 \geq \cdots \geq d_r$. Consider the co-character 
$$\chi: \C^* \to \prod_i \text{Aut}(\sO(d_i)) \subset \text{Aut}(\sV), \qquad t \mapsto (t^{-d_1}, t^{-d_2}, \ldots, t^{-d_r})\ .$$
It defines an action of $\C^*$ on $\text{Quot}(\underline{\gamma},\sV)$ whose fixed point scheme parametrizes those filtrations $\sH_1 \subset \cdots \subset \sH_s=\sV$ for which $\sH_j=\bigoplus_{i=1}^r \left(\sH_j \cap \sO(d_i)\right)$. We have $\text{Quot}(\underline{\gamma},\sV)^{\chi}=\bigsqcup_{\mathbf{b}} Q_{\mathbf{b}}$ where $\mathbf{b}$ runs through the set of tuples $\mathbf{b}=(\beta^j_i)^{j=1,\ldots, s}_{i=1,\ldots,r} \in ((\Z^2)^+)^{rs}$ such that
$$ \sum_i \beta^j_i= \gamma_j\ \qquad \text{and} \qquad\ \;  \sum_{j}\beta^j_i=[\sO(d_i)]=(1,d_i) $$
for any $i,j$ and where 
$$Q_{\mathbf{b}}=\{(\sH_1 \subset \cdots \subset\sH_s)\in \text{Quot}(\underline{\gamma},\sV)^\chi\;|\;[\sH_j \cap \sO(d_i) / \sH_{j-1} \cap \sO(d_i)]=\beta_i^j\;\;\forall \, i,j\}\ .$$
Observe that $Q_{\mathbf{b}}$ is a product of factors $S^k\P^1\simeq \P^k$ because $\text{Quot}((1,d-k),\sO(d))\simeq S^k\P^1$. As $\text{Quot}(\underline{\gamma},\sV)$ is projective, there is a decomposition into attracting varieties
$$\text{Quot}(\underline{\gamma},\sV)=\bigsqcup_{\mathbf{b}} Q_{\mathbf{b}}^+, \qquad Q_{\mathbf{b}}^+:=\{\sH_\bullet\;|\; \text{lim}_{t \to 0} \chi(t)\cdot \sH_\bullet \in Q_{\mathbf{b}}\}\ .$$
We claim that for each $\mathbf{b}$, the canonical morphism $\pi_\mathbf{b}:Q_{\mathbf{b}}^+ \to Q_{\mathbf{b}}$ is an affine fibration. This will prove Lemma~\ref{lem:hyperquot is pure}, and hence Proposition~\ref{prop:purity_prop}. By the Bialynicki-Birula theorem, it is enough to show that $Q_{\mathbf{b}}^+$ is smooth. We will do this by relating $Q_{\mathbf{b}}^+$ to a (smooth) cell $Z_{\underline{\gamma},\underline{\sigma}}^{\frac12}(w)$ for suitable $w, \underline{\sigma}$. Set $\sV_l:=\bigoplus_{i=1}^l \sO(d_i)$ so that we have a filtration $\sV_1 \subset \cdots \subset \sV_r=\sV$. The morphism $\pi_\mathbf{b}$ sends a filtration $\sG_\bullet \in \widetilde{Coh}^{\frac12}_{\underline{\gamma}}$ to
$$\pi_\mathbf{b}(\sG_{\bullet})_j=\left(\bigoplus_l\: \sG_j \cap \sV_l/\sG_{j} \cap \sV_{l-1}\right)_j\ .$$
Because $\text{Ext}^1(\sO(d_i),\sO(d_j))=\{0\}$ whenever $d_i\leq d_j$, any coherent sheaf $\sF$ equipped with a filtration $\sF_1 \subset \cdots \subset \sF_r$ with $\sF_i/\sF_{i-1} \simeq \sO(d_i)$ must be isomorphic to $\sV$. Equivalently, the map $$\widetilde{Coh}^{\frac12}_{\underline{\sigma}} \;\underset{\prod_i Coh_{(1,d_i)}}{\times}\;(\text{Pic}^{d_1} \times \cdots \text{Pic}^{d_r}) \to Coh_{\alpha-d\delta}$$
factors through $X_{\sV,0}$ and is a flag variety bundle over its image, with fiber isomorphic to $G/B$ for $G=\text{Aut}(\sV)/\text{rad}(\text{Aut}(\sV))$.
Note that by construction, $\mathbf{b}$ determines an element of $W(\underline{\gamma},\underline{\sigma})$, where $\underline{\sigma}=((1,d_i))_i$. Consider $$Z^{\frac12,\circ}_{\underline{\gamma},\underline{\sigma}}(\mathbf{b}):=Z^{\frac12}_{\underline{\gamma},\underline{\sigma}}(\mathbf{b})\;\underset{\prod_i Coh_{(1,d_i)}}{\times}\;(\text{Pic}^{d_1} \times \cdots \text{Pic}^{d_r}),$$
an open substack of the smooth cell $Z^{\frac12}_{\underline{\gamma},\underline{\sigma}}(\mathbf{b})$.
From the above, we deduce that there is a canonical smooth $G/B$ fibration $Z^{\frac12,\circ}_{\underline{\gamma},\underline{\sigma}}(\mathbf{b}) \to Q_{\mathbf{b}}^+$, which shows that $Q_{\mathbf{b}}^+$ is smooth as wanted.
\end{proof}
\end{proof}

From the above proof we also obtain

\begin{corollary}\label{cor:pure coh tilde}
For any pair $(\sV,d)$ with $\sV \in Coh^{\frac{1}{2}}_{\alpha-d\delta}$ a vector bundle on $\P^1$ and $d \geq 0$ and for any $\underline{\sigma}$ such that $\sum_i \sigma_i=\alpha$ the stack 
$$\widetilde{Coh}^{\frac12}_{\underline{\sigma}} \underset{Coh_\alpha}{\times} X_{\sV,d}$$
is cohomologically pure.
\end{corollary}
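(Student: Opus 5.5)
We will follow the proof of Proposition~\ref{prop:purity_prop}, but for a single filtration stack rather than a fiber product of two. First we use the torsion splitting of the previous proof. Every $\sF\in X_{\sV,d}$ is isomorphic to $\sV\oplus\sT$, where $\sT$ is the (canonical) torsion subsheaf. For any filtration $\sF_1\subset\cdots\subset\sF_s=\sF$ of type $\underline{\sigma}$, the degree vector $\underline{c}$ of $(\sF_i\cap\sT/\sF_{i-1}\cap\sT)_i$ is locally constant on constructible pieces. This gives a finite partition
$$\widetilde{Coh}^{\frac12}_{\underline{\sigma}}\underset{Coh_\alpha}{\times} X_{\sV,d}=\bigsqcup_{\underline{c}} K^{\underline{c}}_{\underline{\sigma},\sV}$$
indexed by compositions $\underline{c}$ of $d$ of length $s$. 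As shown in the proof above, each piece is a vector bundle stack over
$$\Big(\widetilde{Coh}^{\frac12}_{\underline{\sigma}-\underline{c}}\underset{Coh_{\alpha-d\delta}}{\times}X_{\sV,0}\Big)\times\widetilde{Coh}^{\frac12}_{\underline{c}\delta}.$$
Since vector bundle stacks preserve cohomological purity, it suffices to prove purity of the two factors. We then glue the pieces using Lemma~\ref{L:purity}.

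To apply Lemma~\ref{L:purity} repeatedly, we need to order the finitely many strata $K^{\underline{c}}$ so that successive unions are closed (or open). We will order them by the dominance order on the partial sums $c_1+\cdots+c_i$. The dimension of $\sF_i\cap\sT$ is upper semicontinuous, since it is a kernel dimension. Hence the locus where all partial sums are at least a given vector is closed, and this yields a filtration by closed substacks whose successive differences are the $K^{\underline{c}}$. We also need the fact that a product of pure stacks is pure, by K\"unneth.

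For the torsion factor $\widetilde{Coh}^{\frac12}_{\underline{c}\delta}$, we argue as in the full flag case. Refining each step to a full flag of torsion sheaves gives a proper surjection from $\widetilde{Coh}_{(0,1)^d}$, which is a vector bundle stack over $(\P^1\times B\G_m)^d$ and hence pure. Alternatively, $\widetilde{Coh}^{\frac12}_{\underline{c}\delta}$ is itself a vector bundle stack over $\prod_i Coh_{(0,c_i)}$, and each $Coh_{(0,c_i)}$ is pure by \eqref{E:cohtorP1}. We will use this second argument, since it is immediate.

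For the factor with $d=0$, we use $X_{\sV,0}=B\mathrm{Aut}(\sV)$. The fiber product is then $\mathrm{Quot}(\underline{\gamma},\sV)/\mathrm{Aut}(\sV)$. By Lemma~\ref{lem:hyperquot is pure}, the scheme $\mathrm{Quot}(\underline{\gamma},\sV)$ has an affine paving relative to products of projective spaces. Its cohomology is therefore pure and generated by algebraic cycle classes, and it is $\mathrm{Aut}(\sV)$-equivariantly formal. Purity of the quotient then follows from the Leray spectral sequence for $\mathrm{Quot}/\mathrm{Aut}(\sV)\to B\mathrm{Aut}(\sV)$. This spectral sequence degenerates for weight reasons, with $E_2=\text{H}^*(B\mathrm{Aut}(\sV))\otimes\text{H}^*(\mathrm{Quot})$. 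Here $\mathrm{Aut}(\sV)$ is a unipotent extension of a product of $GL$'s, so $\text{H}^*(B\mathrm{Aut}(\sV))$ is pure. For Borel--Moore homology the same argument applies by duality on the smooth ambient quotient, or by passing to a finite-dimensional approximation of $B\mathrm{Aut}(\sV)$.

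The main obstacle is this equivariant step, namely transferring the purity of the projective scheme $\mathrm{Quot}(\underline{\gamma},\sV)$ to its quotient stack by the non-reductive group $\mathrm{Aut}(\sV)$. Our plan is to replace $\mathrm{Aut}(\sV)$ by its Levi factor, which does not change the cohomology since the unipotent radical is contractible. We then apply the weight degeneration argument above, using that the Bialynicki-Birula cells are stable under the Levi torus.
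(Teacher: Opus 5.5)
Your proposal is correct and follows the paper's route, since the paper deduces the corollary directly from the proof of Proposition~\ref{prop:purity_prop}: the torsion splitting into the strata $K^{\underline{c}}_{\underline{\sigma},\sV}$, the vector bundle stack reduction to $\widetilde{Coh}^{\frac12}_{\underline{c}\delta}$ and to $\mathrm{Quot}(\underline{\gamma},\sV)/\mathrm{Aut}(\sV)$, and then Lemma~\ref{lem:hyperquot is pure}. The only difference is that you make explicit two steps the paper leaves implicit: the semicontinuity ordering of the strata $K^{\underline{c}}$ that is needed to glue them with Lemma~\ref{L:purity}, and the weight-degeneration argument that transfers purity from $\mathrm{Quot}(\underline{\gamma},\sV)$ to its quotient by the connected group $\mathrm{Aut}(\sV)$.
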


\begin{proposition}\label{prop:CohoPureSteinberg}
The following holds:
\begin{enumerate}
    \item [(i)] Assume that $\nu \not\in \frac12 + \Z$. Then $Z^\nu_{\ua,\ua'}$ is cohomologically pure for any $\ua,\ua'$.
    \item [(ii)] Assume that $\nu \in \frac12 + \Z$. Then for any $\ua,\ua' \in \Seq^{\nu}(\alpha)$, the stack $Z^{\nu}_{\ua,\ua'}$ admits an exhaustion by  cohomologically pure quasi-compact open substacks $Z^{\frac{1}{2},I_m}_{\ua,\ua'}$. In particular, it is cohomologically pure. 
\end{enumerate}
\begin{proof}
  The first statement follows from the fact that the partition~(\ref{eq:Partition}) consists of finitely many strata,  each of which is cohomologically pure, together with Lemma~\ref{lem:OpennessStrata}. The second statement is deduced from the definition, along with Proposition \ref{prop:purity_prop}.
\end{proof}
\end{proposition}

\subsubsection{The local open exhaustion}\label{sec:LocalOpenExhaustion} Although it is well suited to the tilting equivalence between $\text{Coh}\P^1$ and $\text{Rep}Q$, we do not know how to prove that $Z_{\ua,\ua'}^{\frac12,I_\sigma}(w)$ is cohomologically pure. We now introduce another open exhaustion which does satisfy this condition. For any $\sigma \in ]\frac12,1]$ and any $\ua,\ua' \in \Seq^{\frac12}(\alpha)$ we set
$$Y_{\ua,\ua'}^{\frac12,I_\sigma}:=Z^{\frac12}_{\ua,\ua'} \underset{\prod_i Coh^{\frac12}_{\alpha_i}}{\times} \prod_i Coh^{\frac12,I_\sigma}_{\alpha_i}$$
where the morphism $Z^{\frac12}_{\ua,\ua'} \to \prod_i Coh^{\frac12}_{\alpha_i}$ is the composition of maps
$$\xymatrix{
Z^{\frac12}_{\ua,\ua'} \ar[r] & \widetilde{Coh}^{\frac12}_{\ua} \ar[r]^-{\pi_{\ua}} & \prod_i Coh^{\frac12}_{\alpha_i}\ .
}$$
There are open embeddings 
$$Y^{\frac12,I_\sigma}_{\ua,\ua'} \subset Z^{\frac12,I_\sigma}_{\ua,\ua'} \subset Z^{\frac12}_{\ua,\ua'}$$
and $Y^{\frac12,I_\sigma}_{\ua,\ua'}$ is quasi-compact.
Intersecting with the partition \eqref{eq:Partition} yields a (finite) partition into locally closed substacks
\begin{equation}\label{eq: partition Y}
Y^{\frac12,I_\sigma}_{\ua,\ua'} =\bigsqcup_{w} Y^{\frac12,I_\sigma}_{\ua,\ua'}(w)\ .
\end{equation}

\begin{proposition}\label{prop:purity local} Fix $\alpha \in (\Z^2)^+$, $I=I_\sigma$ and $\ua,\ua'\in \Seq^{\frac12}(\alpha)$. Then
\begin{enumerate}
\item[(i)] For any $w \in W(\ua,\ua')^{\frac12}$ the stack $Y^{\frac12,I}_{\ua,\ua'}(w)$ is cohomologically pure,
\item[(ii)] The stack $Y^{\frac12,I}_{\ua,\ua'}$ is cohomologically pure.
\end{enumerate}
\end{proposition}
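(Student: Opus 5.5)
The plan for (i) is to identify $Y^{\frac12,I}_{\ua,\ua'}(w)$ with an open piece of the cell $Z^{\frac12}_{\ua,\ua'}(w)$ cut out by a condition pulled back from the base of $\pi_w$. The first step is to recall from Lemma~\ref{lem:dimStrata} that $\pi_w: Z^{\frac12}_{\ua,\ua'}(w)\to \prod_{i,j} Coh_{w_{ij}}$ is an iterated stack vector bundle. The second step is to show that the condition defining $Y$ descends to this base. For a pair $(F_\bdot,F'_\bdot)$ of type $w$, each factor $F_i/F_{i-1}$ carries the filtration induced by $F'_\bdot$, with subquotients $(F_\bdot,F'_\bdot)_{i,j}$, $j=1,\dots,l$. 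The condition $F_i/F_{i-1}\in \Coh^{I}$ is therefore a condition on an extension of the objects $(F_\bdot,F'_\bdot)_{i,j}$.

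This condition does not, however, depend only on the associated graded. The fix is to factor $\pi_w$ through the intermediate stack
$$Z^{\frac12}_{\ua,\ua'}(w)\longrightarrow \prod_i \widetilde{Coh}_{(w_{il},\dots,w_{i1})}\longrightarrow \prod_{i,j}Coh_{w_{ij}},$$
where the first map records the filtered factors $F_i/F_{i-1}$. Both maps are iterated vector bundle stacks; the first is the analogue of the fibration by the ``off-diagonal'' extension data in Lemma~\ref{lem:dimStrata}. Then $Y^{\frac12,I}_{\ua,\ua'}(w)$ is the preimage of
$$\prod_i \bigl(\widetilde{Coh}_{(w_{il},\dots,w_{i1})}\underset{Coh_{\alpha_i}}{\times} Coh^{I}_{\alpha_i}\bigr),$$
so it is a vector bundle stack over this product. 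Purity of (i) thus reduces to purity of each $\widetilde{Coh}^{\frac12,I}_{\underline{\beta}}:=\widetilde{Coh}_{\underline{\beta}}\times_{Coh_{\beta}}Coh^{I}_{\beta}$. This follows from Corollary~\ref{cor:pure coh tilde}: $Coh^{I_m}_\beta$ is a finite disjoint union of the locally closed strata $X_{\sV,d}$, each piece $\widetilde{Coh}_{\underline\beta}\times_{Coh_\beta}X_{\sV,d}$ is pure, and Lemma~\ref{L:purity} glues the pieces along a filtration by closed unions of strata ordered by closure. Since everything is quasi-compact, the constant exhaustion suffices.

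For (ii), $Y^{\frac12,I}_{\ua,\ua'}$ carries the finite partition \eqref{eq: partition Y} into the pure pieces from (i). I would order the strata $w$ compatibly with the closure relation, for instance by dimension, so that successive unions are open. Repeated application of Lemma~\ref{L:purity} then gives purity. Finiteness of the partition comes from quasi-compactness together with Lemma~\ref{qclem}.

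The main obstacle is the second step: making precise that the first map above is a vector bundle stack, so that the $\Coh^I$-condition on the factors $F_i/F_{i-1}$ really is pulled back from the intermediate base. This requires redoing the proof of Lemma~\ref{lem:dimStrata} in two stages, first building each filtered row $F_i/F_{i-1}$ and then gluing the rows using $\mathrm{Ext}^1$ data, which is a vector bundle stack because the category has homological dimension one. A secondary point is checking that the strata $X_{\sV,d}$ admit a closure-compatible ordering; this holds because the isomorphism type degenerates only to finitely many types inside $Coh^{I_m}$.
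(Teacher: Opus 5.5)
Your proposal is correct and follows the paper's proof essentially step for step. You factor $\pi_w$ through $\prod_i \widetilde{Coh}^{\frac12}_{\underline{\gamma}^{(i)}}$ via the iterated vector bundle stack recording the filtrations on $F_i/F_{i-1}$ induced by $F'_\bdot$, identify $Y^{\frac12,I}_{\ua,\ua'}(w)$ as the pullback of $\prod_i \widetilde{Coh}^{\frac12,I}_{\underline{\gamma}^{(i)}}$, conclude by Corollary~\ref{cor:pure coh tilde} and the finite partition into the $X_{\sV,d}$, and get (ii) from the finite partition \eqref{eq: partition Y}. One minor correction: in (ii), order the strata by a linear refinement of $\le$, whose lower sets are closed unions of strata by Lemma~\ref{lem:OpennessStrata}. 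Ordering ``by dimension'' does not by itself guarantee that the successive unions are open or closed.
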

\begin{proof} Statement (ii) is a consequence of (i), since the partition \eqref{eq: partition Y} is locally closed and finite. We turn to (i). First observe that there is a factorization 
$$\xymatrix{
\pi_w : Z^{\frac12}_{\ua,\ua'}(w) \ar[r]^{f} & \prod_{i} \widetilde{Coh}^{\frac12}_{\underline{\gamma}^{(i)}} \ar[r]^-{\left( q_{\underline{\gamma}^{(i)}}\right)_i}& \prod_{i,j} \Coh^{\frac12}_{w_{ij}} } $$
where $\underline{\gamma}^{(i)}=(w_{i1},w_{i2},\cdots,w_{i,s})$, and $f$ is the morphism which associates to a pair $(F_\cdot, G_\cdot)$ the tuple of filtrations on $F_1, F_2/F_1, \ldots, F_{r}/F_{r-1}$ induced by $G_\cdot$. By the same arguments as in the proof of Lemma~\ref{lem:dimStrata}, the morphism $f$ is an iterated vector bundle stack. Next, by construction, 
$$Y^{\frac12,I}_{\ua,\ua'}(w)=Z^{\frac12}_{\ua,\ua'}(w) \underset{\prod_{i} \widetilde{Coh}^{\frac12}_{\underline{\gamma}^{(i)}}}{\times}\prod_{i} \widetilde{Coh}^{\frac12,I}_{\underline{\gamma}^{(i)}}$$
thus it suffices to show that for any $\underline{\gamma}$ the stack $\widetilde{Coh}^{\frac12,I}_{\underline{\gamma}}$ is cohomologically pure. This follows from Corollary~\ref{cor:pure coh tilde} together with the fact that there is a finite, locally closed partition $Coh^{\frac12,I}_{\alpha}=\bigsqcup_{\sV,d} X_{\sV,d}$.
\end{proof}

\subsection{Isomorphisms on strata}

  Recall from Corollary~\ref{cor:QuasiCompact} the stack isomorphism 
    $$\bigsqcup_{w\in W(\ua,\ua')^{\nu_n,I}}Z^{\nu_n,I}_{\ua,\ua'}(w)=Z^{\nu_n,I}_{\ua,\ua'}\ \cong Z^{\frac{1}{2},I}_{\ua,\ua'}=\bigsqcup_{w\in W(\ua,\ua')^{\frac{1}{2},I}} Z^{\frac{1}{2},I}_{\ua,\ua'}(w)$$
    for large enough $n$. We next show that this isomorphism is compatible with the respective cell partitions of $Z^{\nu_n,I}_{\ua,\ua'}$ and $Z^{\frac12,I}_{\ua,\ua'}$.

 \begin{proposition}\label{prop:strata}
  Fix $\alpha\in (\Z^2)^+$, an interval $I=]\sigma-1,\frac{1}{2}]$, and  $\ua$, $\ua'\in \mathrm{Seq}^{\frac{1}{2}}(\alpha)$. There exists $n_0\in \N$ such that for any $n\geq n_0$, we have 
    $$Z^{\frac{1}{2},I}_{\ua,\ua'}(w) \cong Z^{\nu_n,I}_{\ua,\ua'}(w) .$$
  for any $w\in W(\ua,\ua')^{\frac{1}{2},I}$.
  \begin{proof}
  Let $s$ and $l$ be the length of $\ua$ and $\ua'$ respectively.
  By Lemma \ref{lem:finiteness}, the set $W(\ua,\ua')^{\frac{1}{2},I}$ is finite, it follows that 
   $$\underline{\alpha_i}=(w(i,1),w(i,2),\cdots,w(i,l)) \text{  and } \underline{\alpha'_j}=(w(j,1),w(j,2),\cdots,w(j,s))$$ 
  of various $w\in W(\ua,\ua')^{\frac{1}{2},I}$ have finitely many choices for each $1\leq i\leq s$ and $1\leq j\leq l$. Note that for any $(F_{\bdot},F'_{\bdot})\in Z^{\frac{1}{2},I}_{\ua,\ua'}$, $F_i/F_{i-1}$ and $F'_j/F'_{j-1}$ belong to $\Coh^{\frac{1}{2},(\nu_{n_0}-1,\frac{1}{2}]}$ for some $n_0$ by Lemma \ref{qclem}. Applying Corollary \ref{cor:QuasiCompact} to $I'=(\nu_{n_0}-1,\frac{1}{2}]$, all $\underline{\alpha_i}\in \mathrm{Seq}(\alpha_i)$, $\underline{\alpha'_j}\in \mathrm{Seq}(\alpha'_j)$ for $1\leq i\leq s$, $1\leq j\leq l$ and $w\in W(\ua,\ua')^{\frac{1}{2},I}$, then there exists $n_1\geq n_0$ such that for any $n\geq n_1$, we have
     $$\widetilde{Coh}^{\frac{1}{2}, I'}_{\underline{\alpha_i}} \cong \widetilde{Coh}^{\nu_{n_1}, I'}_{\underline{\alpha_i}}\ \  \text{and}\ \  \widetilde{Coh}^{\frac{1}{2}, I'}_{\underline{\alpha'_j}} \cong \widetilde{Coh}^{\nu_{n_1}, I'}_{\underline{\alpha'_j}}, $$
  for all $\underline{\alpha_i}$ and $\underline{\alpha'_j}$, which means all factors $(F_{\bdot},F'_{\bdot})_{i,j}$ of $(F_{\bdot},F'_{\bdot})\in Z^{\frac{1}{2},I}_{\ua,\ua'}$ lie in $\Coh^{\frac{1}{2},(\nu_{n_1}-1,\frac{1}{2}]}$. 

  For any $(F_{\bdot},F'_{\bdot})\in Z^{\frac{1}{2},I}_{\ua,\ua'}\cong Z^{\nu,I}_{\ua,\ua'}$, denote by $F_i\cap^{\nu} F'_j$ the subobject $F_i\cap F'_j$ in $\Coh^{\nu}$. We claim that $F_i\cap^{\frac{1}{2}} F'_j = F_i\cap^{\nu} F'_j$ as subobjects of $F_s=F'_l$ for any $\nu\leq \nu_{n_1}$. As a result, we have
     $$\frac{F_i\cap^{\frac{1}{2}}F'_j}{F_{i-1}\cap^{\frac{1}{2}}F'_j +^{\frac{1}{2}} F_i\cap^{\frac{1}{2}}F'_{j-1}}= \frac{F_i\cap^{\nu} F'_j}{F_{i-1}\cap^{\nu} F'_j +^{\nu} F_i\cap^{\nu} F'_{j-1}}.$$
  Equivalently, we obtain that $(F_{\bdot},F'_{\bdot})\in Z^{\frac{1}{2},I}_{\ua,\ua'}(w)$ if and only if $(F_{\bdot},F'_{\bdot})\in Z^{\nu,I}_{\ua,\ua'}(w)$.
  
  Now we begin to prove the claim. Firstly, note that any short exact sequence in $\Coh^{\frac{1}{2}}$ with three terms in $\Coh^{]\nu-1,\frac{1}{2}]}$ is also a short exact sequence in $\Coh^{\nu}$, because any triangle in $D^b(\Coh^{\nu})$ with three terms in $\Coh^{\nu}$ is induced by a short exact sequence in $\Coh^{\nu}$. Then we have $F_i\cap^{\frac{1}{2}} F'_j\subseteq F_i$ and $F_i\cap^{\frac{1}{2}} F'_j\subseteq F'_j$ in $\Coh^{\nu}$. In particular $F_i\cap^{\frac{1}{2}} F'_j \subseteq F_i\cap^{\nu} F'_j$.

  On the other hand, note that $F_i/(F_i\cap^{\nu} F'_j)\cong (F_i+^{\nu}F'_j)/F'_j$ in $\Coh^{\nu}$ is a subobject of $F'_l/F'_j\in \Coh^{\nu,]\nu-1,\frac{1}{2}]}$, it follows that $F_i/(F_i\cap^{\nu} F'_j)$ belongs to $\Coh^{\nu,]\nu-1,\frac{1}{2}]}$. Hence, the following short exact sequence in $\Coh^{\nu}$
    $$0\lrw F_i\cap^{\nu} F'_j  \lrw F_i \lrw F_i/(F_i\cap^{\nu} F'_j)\lrw  0 \text{ in } \Coh^{\nu}$$
  has three terms in $\Coh^{\nu,]\nu-1,\frac{1}{2}]}$, which means that it is also a short exact sequence in $\Coh^{\frac{1}{2}}$. In particular, $F_i\cap^{\nu} F'_j\subseteq F_i$ in $\Coh^{\frac{1}{2}}$. Similarly, we have $F_i\cap^{\nu} F'_j\subseteq F'_j$ in $\Coh^{\frac{1}{2}}$, it follows that $F_i\cap^{\nu} F'_j\subseteq F_i\cap^{\frac{1}{2}} F'_j$. Finally we obtain that $F_i\cap^{\nu} F'_j= F_i\cap^{\frac{1}{2}} F'_j$.  
  \end{proof}
 \end{proposition}
  Proposition~\ref{prop:strata} also establishes a bijection between sets $W(\ua,\ua')^{\frac{1}{2},I}$ and $W(\ua,\ua')^{\nu,I}$ for $\nu$ close enough to $\frac{1}{2}$.
 \begin{corollary}\label{CorSets}
  Keep notations as above. Then there exists $\nu_0\in (\frac{1}{2},\sigma]$ such that for any $\frac{1}{2}<\nu\leq \nu_0$, we have 
     $$W(\ua,\ua')^{\frac{1}{2},I}= 
        W(\ua,\ua')^{\nu,I}$$
 \end{corollary}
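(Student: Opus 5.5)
The equality of index sets follows by combining Proposition~\ref{prop:strata} with Corollary~\ref{cor:QuasiCompact}(ii). By Remark~\ref{rem:discrete_phases}, the categories $\Coh^{\nu}$ for $\nu\in]\frac12,\nu_{n}]$ only take the values $\Coh^{\nu_m}$, $m\geq n$. So it suffices to produce $n_0$ such that $W(\ua,\ua')^{\frac12,I}=W(\ua,\ua')^{\nu_n,I}$ for all $n\geq n_0$, and then take $\nu_0=\min(\nu_{n_0},\sigma)$.

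First choose $n_0$ large enough that two things hold. One is that the isomorphism $\Psi^{\nu_n,I}_{\ua,\ua'}:Z^{\frac12,I}_{\ua,\ua'}\simeq Z^{\nu_n,I}_{\ua,\ua'}$ of Corollary~\ref{cor:QuasiCompact}(ii) exists for $n\geq n_0$. The other is that the conclusion of Proposition~\ref{prop:strata} holds for $n\geq n_0$. More precisely, I will use what its proof actually shows: for every point $(F_\bdot,F'_\bdot)$ of $Z^{\frac12,I}_{\ua,\ua'}$, the subquotients $(F_\bdot,F'_\bdot)_{i,j}$ computed in $\Coh^{\frac12}$ and in $\Coh^{\nu_n}$ coincide, because $F_i\cap^{\frac12}F'_j=F_i\cap^{\nu_n}F'_j$. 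Both conditions only require $n$ larger than finitely many thresholds, since $W(\ua,\ua')^{\frac12,I}$ is finite by Lemma~\ref{lem:finiteness}.

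Next argue the two inclusions. Take $w\in W(\ua,\ua')^{\frac12,I}$. Then $Z^{\frac12,I}_{\ua,\ua'}(w)$ contains a point, and by the coincidence of subquotients its image under $\Psi$ lies in $Z^{\nu_n,I}_{\ua,\ua'}(w)$. Hence this stratum is nonempty and $w\in W(\ua,\ua')^{\nu_n,I}$. Conversely, take $w\in W(\ua,\ua')^{\nu_n,I}$ and a point of $Z^{\nu_n,I}_{\ua,\ua'}(w)$. Since $\Psi$ is an isomorphism, the point comes from some $(F_\bdot,F'_\bdot)\in Z^{\frac12,I}_{\ua,\ua'}$. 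That point has some type $w'\in W(\ua,\ua')^{\frac12,I}$, and the coincidence of subquotients forces $w'=w$. This also shows that all entries of $w$ lie in $K_0(\Coh^{\frac12})^+$.

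The main obstacle is in the converse direction. Proposition~\ref{prop:strata} is stated only for $w\in W(\ua,\ua')^{\frac12,I}$, so it cannot be applied directly to a $w$ that a priori appears only on the $\nu_n$ side. The fix is to work pointwise with the equality of intersections $F_i\cap^{\frac12}F'_j=F_i\cap^{\nu_n}F'_j$. That equality was established for all points of $Z^{\frac12,I}_{\ua,\ua'}$, with a threshold $n_1$ that is uniform because only finitely many types occur. Since $\Psi$ is surjective, it therefore applies to every point of $Z^{\nu_n,I}_{\ua,\ua'}$ as well.
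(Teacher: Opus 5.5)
Your argument is correct and takes the paper's approach. The paper deduces the corollary directly from Proposition~\ref{prop:strata}, whose proof shows pointwise that $F_i\cap^{\frac12}F'_j=F_i\cap^{\nu}F'_j$ under the isomorphism $Z^{\frac12,I}_{\ua,\ua'}\simeq Z^{\nu,I}_{\ua,\ua'}$, so the sets of nonempty cells coincide. Your reduction to $\nu=\nu_n$ via Remark~\ref{rem:discrete_phases}, and your use of the surjectivity of $\Psi$ for the reverse inclusion, spell out details the paper leaves implicit.
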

 \medskip

\subsection{Partial order on cells} We next introduce a partial order $\le_{\nu}$ on $W^\nu(\ua,\ua')$. Set
  \[ w([1\!:\!i],[1\!:\!j])
  := \sum_{\substack{1 \le i' \le i \\ 1 \le j' \le j}} w_{i'j'}.\]
  For $w, w' \in W^\nu(\ua,\ua')$ we put 
  $ w \le_{\nu} w'$ if and only if
  $$ w([1:i],[1:j])\geq_{\nu}w'([1:i],[1:j])
  \quad \quad (\forall\; 1 \le i \le l,\ 1 \le j \le s)\ ,$$
  where $\alpha_1\geq_{\nu}\alpha_2$ means $\alpha_1-\alpha_2\in K_0(\Coh^{\nu})^+$.  We then set
   $$ Z^{\nu}_{\ua,\ua'}(\leq w)_=\bigsqcup_{w'\leq_{\nu} w}Z^{\nu}_{\ua,\ua'}(w').$$
 When there is no risk of confusion we simply write $\geq$ instead of $\geq_\nu$.

\medskip
 
\begin{lemma}\label{lem:OpennessStrata}
For any $\nu$ and any $\ua,\ua'$, 
$Z^{\nu}_{\ua,\ua'}(\leq w)$ is closed in $Z^{\nu}_{\ua,\ua'}$ and $Z^{\nu}_{\ua,\ua'}(w)$ is open in $Z^{\nu}_{\ua,\ua'}(\leq w)$.
 \end{lemma}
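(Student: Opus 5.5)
The plan is to express membership in $Z^{\nu}_{\ua,\ua'}(\leq w)$ through semicontinuous rank-type conditions on the classes of the intersections $F_i\cap F'_j$. We then deduce openness of the cell inside its closure by a counting argument.

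\emph{Step 1: reformulation.} For $(F_{\bdot},F'_{\bdot})$ of type $w'$, a direct telescoping computation with the subquotients \eqref{eq:def_FF'ij} gives
$$[F_i\cap F'_j]=w'([1\!:\!i],[1\!:\!j]).$$
Indeed, the filtration of $F_i\cap F'_j$ by the objects $F_{i'}\cap F'_{j'}$, $i'\le i$, $j'\le j$, has exactly the subquotients $(F_{\bdot},F'_{\bdot})_{i',j'}$. Hence $Z^{\nu}_{\ua,\ua'}(\leq w)$ is the locus where $[F_i\cap F'_j]\geq_\nu w([1\!:\!i],[1\!:\!j])$ for all $i,j$.

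\emph{Step 2: closedness.} Since $F_i\cap F'_j=\ker(F_i\to F_s/F'_j)$, it suffices to show that the locus $\{[\ker(F_i\to F_s/F'_j)]\geq_\nu \beta\}$ is closed, for fixed $\beta$.

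We work on the quasi-compact opens $Z^{\nu,I}_{\ua,\ua'}$ of \eqref{eq:OpenEx}. Closedness may be checked on each of them, and there the classes that occur form a finite set by Lemma~\ref{lem:finiteness}.

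The condition ``the kernel of $F_i\to F_s/F'_j$ has class $\geq_\nu\beta$'' means that $F_i\cap F'_j$ contains a subobject of class $\beta$. Equivalently, the proper morphism
$$\{(F_{\bdot},F'_{\bdot},K)\;:\;K\subset F_i\cap F'_j,\ [K]=\beta\}\longrightarrow Z^{\nu}_{\ua,\ua'}$$
is surjective onto that locus. This morphism is a relative Quot scheme, hence proper; it is a closed substack of the product of relative Quot schemes of $F_i$ and $F'_j$, and these are proper because $\pi^\nu$ is. Its image is therefore closed, since proper maps are closed.

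A finite intersection of closed sets is closed, which gives the first assertion. Note that the order $\geq_\nu$ on $K_0(\Coh^\nu)^+$ must be exactly the existence of a subobject of that class inside objects of class $[F_i\cap F'_j]$. This needs care: ``$[F_i\cap F'_j]\geq_\nu\beta$'' should be replaced by the existence of a subobject of class $\beta$, and one checks that on the set of types occurring, the two conditions cut out the same union of cells.

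\emph{Step 3: openness of the cell.} The set $\{w'\le_\nu w,\ w'\neq w\}$ gives a locus
$$\bigcup_{(i,j)}\{[F_i\cap F'_j]\neq w([1\!:\!i],[1\!:\!j])\}\cap Z^{\nu}_{\ua,\ua'}(\leq w).$$
Inside $Z(\leq w)$, the condition $[F_i\cap F'_j]\neq w([1\!:\!i],[1\!:\!j])$ is the same as $[F_i\cap F'_j]\geq_\nu w([1\!:\!i],[1\!:\!j])+\gamma$ for some nonzero effective $\gamma$. Locally, only finitely many minimal such $\gamma$ occur, again by Lemma~\ref{lem:finiteness}. So the complement of the cell in $Z(\le w)$ is a finite union of closed loci of the type handled in Step 2, and $Z^{\nu}_{\ua,\ua'}(w)$ is open in $Z^{\nu}_{\ua,\ua'}(\leq w)$.

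The main obstacle is Step 2: making rigorous that the closure relation among cells is governed by $\geq_\nu$. In particular, one must show that the image of the relative Quot morphism is precisely a union of cells $w'\le_\nu w$. For $\nu\in\frac12+\Z$ this must be done on quasi-compact pieces because of infinitely many strata.
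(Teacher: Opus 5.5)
The gap is in Step 2. The relation $\kappa\geq_\nu\beta$ only says $\kappa-\beta\in K_0(\Coh^\nu)^+$. It does not imply that an object of class $\kappa$ has a subobject of class $\beta$; only the converse holds. Moreover, the locus where $F_i\cap F'_j$ has a subobject of class $\beta$ is not a union of cells, so your proposed patch (``the two conditions cut out the same union of cells'') is false.

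Here is a counterexample. Take $\nu=\frac12$, $\alpha=(2,0)$, $\ua=\ua'=((1,0),(1,0))$, and $w$ with $w_{11}=w_{22}=(0,1)$, $w_{12}=w_{21}=(1,-1)$. Then $Z(\leq w)$ is cut out by the single condition $[F_1\cap F'_1]\geq_{\frac12}(0,1)$. Consider two points:
\begin{itemize}
\item[(a)] $F_1=F'_1=\sO\subset\sO^{\oplus 2}$;
\item[(b)] $F_1=F'_1=\sO(-1)\oplus\C_x\subset\sO(-1)\oplus\sO\oplus\C_x$.
\end{itemize}
Both lie in the diagonal cell, which is $\leq w$ because $(1,0)-(0,1)=(1,-1)$ is effective. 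In (b) the intersection contains a subsheaf of class $(0,1)$; in (a) it does not. So the image of your Quot morphism is a strictly smaller closed subset of $Z(\leq w)$ that cuts through a cell. As written, Step 2 proves closedness of the wrong set.

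The repair uses exactly the finiteness you already invoke. On $Z^{\nu,I}_{\ua,\ua'}$ only a finite set $S_{ij}$ of classes $[F_i\cap F'_j]$ occurs, by Lemma~\ref{lem:finiteness}. Let $\mathrm{Quot}_{ij,\gamma}$ parametrize subobjects $K\subseteq F_i\cap F'_j$ of class $\gamma$. Then
$$\{[F_i\cap F'_j]\geq_\nu\beta\}\cap Z^{\nu,I}_{\ua,\ua'}=\bigcup_{\gamma\in S_{ij},\ \gamma\geq_\nu\beta}\mathrm{im}\bigl(\mathrm{Quot}_{ij,\gamma}\to Z^{\nu,I}_{\ua,\ua'}\bigr).$$
The inclusion $\supseteq$ holds because $[F_i\cap F'_j]=\gamma+[(F_i\cap F'_j)/K]\geq_\nu\gamma\geq_\nu\beta$. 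The inclusion $\subseteq$ follows by taking $K=F_i\cap F'_j$. This is a finite union of images of proper morphisms, hence closed. Restricting to $\gamma\neq\beta$ gives closedness of the locus where $[F_i\cap F'_j]\geq_\nu\beta$ and $[F_i\cap F'_j]\neq\beta$; here you use that $\gamma\geq_\nu\beta\geq_\nu\gamma$ forces $\gamma=\beta$. That is what Step 3 needs, and Steps 1 and 3 are otherwise fine.

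With this change your argument is a correct, direct proof that works uniformly in $\nu$. It differs from the paper's route. The paper cites \cite{Przezdziecki2019} for $\nu\notin\frac12+\Z$. For $\nu=\frac12$ it does not argue geometrically: it transports the quiver statement via Proposition~\ref{prop:strata} and Corollary~\ref{CorSets}, after checking that $\leq_{\frac12}$ and $\leq_\nu$ agree on the finite set $W(\ua,\ua')^{\frac12,I_\sigma}$ for $\nu$ close to $\frac12$. Your fixed argument avoids that comparison machinery and also covers the quiver case.
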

\begin{proof}
For $\nu\not\in \frac12 + \Z$ this follows e.g. from \cite[Lemma 2.8]{Przezdziecki2019}. A similar proof can be given in the case $\nu=\frac{1}{2}$.
Instead, we will deduce it from the case $\nu \not\in \frac12 + \Z$ using the comparison results of the previous section. Let $w \in W(\ua,\ua')^{\frac12}$. It suffices to prove that for any interval $I_\sigma=]\sigma-1,\frac12]$ with $\sigma>\frac12$ the constructible substack 
$$Z_{\ua,\ua'}^{\frac12,I_\sigma}(\leq w):=\bigsqcup_{w'\leq_{\frac12} w} Z_{\ua,\ua'}^{\frac12,I_\sigma}(w')$$
is closed in $Z_{\ua,\ua'}^{\frac12,I_\sigma}$. By Proposition~\ref{prop:strata} and Corollary~\ref{CorSets},  we have 
$$W(\ua,\ua')^{\frac12,I_\sigma}=W(\ua,\ua')^{\nu,I_\sigma}, \qquad Z^{\frac12,I_\sigma}_{\ua,\ua'}(w)=Z^{\nu,I_\sigma}_{\ua,\ua'}(w)$$
for $\nu$ close enough to $\frac12$, hence it is enough to check that the restriction of the partial orders $\leq_{\frac12}$ and $\leq_{\nu}$ to $W(\ua,\ua')^{\frac12,I_\sigma}$ coincide for $\nu$ close enough to $\frac12$. This follows from the facts that the intervals $I_\nu, I_{\frac12}$ are open on the left and that $W(\ua,\ua')^{\frac12,I_{\sigma}}$ is finite, so that 
$\min\{\mu(w_{ij})\}>-\frac12$, where the minimum is taken over all the entries $w_{ij}$ of the elements of
$ W(\ua,\ua')^{\frac12,I_{\sigma}}$.
\end{proof}


When $\nu \not\in \frac12+\Z$, the poset $W(\ua,\ua')^{\nu}$ is finite and admits a refinement to a total order. Things are considerably more complicated when $\nu=\frac12$, as illustrated by the following example.
\begin{example} 
  Take $\alpha=(4,0)$ and $\ua=\ua'=((2,0),(2,0))\in \Seq^{\frac{1}{2}}(\alpha)$. Then  $W^{\frac{1}{2}}(\ua,\ua')$ consists of the following matrices:
  \[u_n=\begin{bmatrix} (2,-n) &(0,n)\\ (0,n) &(2,-n)  \end{bmatrix},~
  w_m=\begin{bmatrix} (1,m) &(1,-m)\\(1,-m) &(1,m)  \end{bmatrix},~
   v_{d}=\begin{bmatrix} (0,d) &(2,-d)\\(2,-d) &(0,d)    \end{bmatrix},\]
  where $n,d\ge 0$ and $m\in \Z$. By Lemma~\ref{lem:dimStrata}, the dimension of $Z(w)=Z^{\frac12}_{\ua,\ua'}(w)$ is equal to $-12, -9$ or $-8$ depending on which group $w$ belongs to. The partial order is (in this special case) linear, given by:
  \[u_0<\cdots< u_n<\cdots<w_m<\cdots <w_0<\cdots<w_{-m}<\cdots<v_{d}<\cdots<v_0.\]
Observe that the poset intervals of the form $[u_n,w_m], [u_n,v_m]$ or $[w_n,v_m]$ are all infinite. The restriction of $\geq$ to the three blocks is respectively isomorphic to $(\N, \geq)$, $(\Z, \geq)$ and $(-\N,\geq)$.   
Note that $Z^{\mathrm{rk}=0}:=\bigcup_{d\ge 0} Z(\ge v_d)$ is an open exhaustion by well-approximated, cohomologically pure stacks. Hence
  \[ \mathrm{H}_*(Z^{\mathrm{rk}=0}) = \varprojlim_{d}\, \mathrm{H}_*\bigl(Z(\ge v_d)\bigr)\]
carries a filtration whose associated graded is $\bigoplus_{d \ge 0}\mathrm{H}_*(Z(v_d))$. However, this is not the case of the open filtrations $Z(\ge w_l)$ or $Z(\ge u_n)$. 
On the opposite, the filtration $Z^{\mathrm{rk}=2}:= \bigcup_{n\ge 0} Z(\le u_n)$ exhibits $Z^{\mathrm{rk}=2}$ naturally as a colimit, with respect to closed immersions, of well-approximated, cohomologically pure stacks. The pushforward maps of the closed embeddings
  \[(i_n)_*: \text{H}_*(Z(\le u_n))\to \text{H}_*(Z(\le u_{n+1}))\]
  are thus injective. However, there is \textit{à priori} no reason to expect the induced map  
  \[ \varinjlim_{n} \;\mathrm{H}_*(Z(\le u_n))\to \mathrm{H}_*(Z^{\mathrm{rk}=2}).\]
  to be a dense embedding in general.
\end{example}

\medskip
\section{Schur and KLR algebras of coherent sheaves on {\texorpdfstring{$\P^1$}{Lg}}}\label{sec:Schur algebras}
\subsection{Definition of Schur and KLR algebras }
  Recall that we denote by $\text{H}_{*}(-)$ the Borel-Moore homology. For any $\nu$ and any $\alpha\in \Z^2$ and sequences $\underline{\alpha_1}$, $\underline{\alpha_2}$, $\underline{\alpha_3}\in \Seq^\nu(\alpha)$, there is a convolution product
    \begin{equation}\label{eq:def_conv_prod}
        *: \text{H}_{i}(Z^{\nu}_{\underline{\alpha_1},\underline{\alpha_2}})\otimes \text{H}_{j}(Z^{\nu}_{\underline{\alpha_2},\underline{\alpha_3}})\lrw \text{H}_{i+j-2\text{dim}(\widetilde{Coh}^\nu_{\ua_2})}(Z^{\nu}_{\underline{\alpha_1},\underline{\alpha_3}}),
    \end{equation}
    defined as follows. Consider the triple product $X=\widetilde{Coh}^\nu_{\underline{\alpha_1}} \times \widetilde{Coh}^\nu_{\underline{\alpha_2}} \times \widetilde{Coh}^\nu_{\underline{\alpha_3}}$ and the (smooth) projections
    $p_{ij}: X \to \widetilde{Coh}^\nu_{\underline{\alpha_i}} \times \widetilde{Coh}^\nu_{\underline{\alpha_j}}$. Then $p_{13}$ restricts to a proper morphism 
    $$p_{13}:p_{12}^{-1}(Z^\nu_{\underline{\alpha_1},\underline{\alpha_2}}) \underset{X}{\times}p_{23}^{-1}(Z^\nu_{\underline{\alpha_3},\underline{\alpha_3}}) \to Z^\nu_{\underline{\alpha_1}, \underline{\alpha_3}}$$
and for $c_{ij} \in \text{H}_{*}(Z^{\nu}_{\underline{\alpha_i},\underline{\alpha_j}})$ we set
$$c_{12} * c_{23}=p_{13!}\left(p_{12}^*(c_{12}) \cap p_{23}^*(c_{23}) \right)$$
where in the above $\cap$ stands for the intersection with the supports in the smooth ambient stack $X$. Note that when $\nu \in \Z + \frac{1}{2}$, the stacks $Z^\nu_{\underline{\gamma},\underline{\gamma}'}$ are only locally of finite type, but the convolution product is well-defined. Indeed, the convolution product being local on the base $Coh^\nu_\alpha$, we may define and compute it on every quasi-compact open substack (which we may choose to be a global quotient stack) using the formalism of \cite[Sec. 2.6]{Chriss2010}. For the reader's comfort, a direct sheaf-theoretic construction of this convolution product is given in Appendix~\ref{app:convolution product}.

\smallskip

In particular, there is a restricted convolution product 
    $$*: \text{H}_{*}(Z^{\nu,I}_{\ua_1,\ua_2})\otimes \text{H}_{*}(Z^{\nu,I}_{\ua_2,\ua_3})\lrw \text{H}_{*}(Z^{\nu,I}_{\ua_1,\ua_3})$$
for any interval $I \subset ]\nu-1,\nu]$. Summing over all pairs of sequences in $\Seq^\nu(\alpha)$, we obtain associative algebras 
$$A_{\alpha}^{\nu}:=\bigoplus\limits_{\ua,\ua'\in \Seq^\nu(\alpha)} \text{H}_{*}( Z^{\nu}_{\ua,\ua'}), \qquad A_{\alpha}^{\nu,I}:=\bigoplus\limits_{\ua,\ua'\in \Seq^\nu(\alpha)} \text{H}_{*}( Z^{\nu,I}_{\ua,\ua'}).$$
We set $A^\nu_{\ua,\ua'}=\text{H}_*(Z^\nu_{\ua,\ua'})$ so that $A^\nu_{\alpha}=\bigoplus_{\ua,\ua'}A^\nu_{\ua,\ua'}$.

\medskip

\begin{definition}
  We call $A^{\nu}_{\alpha}$, resp. $A^{\nu,I}_{\alpha}$  the \textit{Schur algebra} associated to $(\nu,\alpha)$, resp. $(\nu,I,\alpha)$.
\end{definition}

We turn to the KLR algebra associated to $\P^1$, which one may expect to be easier to understand than the Schur algebra $A^{\frac12}_\alpha$. Set 
  $$(\Z^2)^{+}_\circ=\{(1,l), l \in \Z\} \cup \{(0,1)\}\subseteq (\Z^2)^+\ .$$ 
Define
  \[\Seq^{\frac{1}{2}}_{\circ}(\alpha):=\{\ua\in\Seq^{\frac{1}{2}}(\alpha)|~\forall\;i,\;\alpha_i\in (\Z^2)_{\circ}^+\}.\]
  We use the notation $\ua\vDash \alpha$ to denote $\ua\in \Seq^{\frac{1}{2}}_{\circ}(\alpha)$. 

\medskip

\begin{definition}~
Let $\alpha \in (\Z^2)^+$. We define the \textit{KLR algebra} associated with $\alpha$ as the graded subalgebra
$$\sR_{\alpha}=\sR_{\alpha}^{\frac12} := \bigoplus_{\ua,\ua'\vDash \alpha} \text{H}_*(Z^{\frac12}_{\ua,\ua'})$$
of the Schur algebra $A^{\frac12}_\alpha$.
\end{definition}

\begin{remark}~
  When $\nu \not\in \Z + \frac{1}{2}$, the Schur algebra is often called \textit{quiver-Schur algebra} in the literature. It is studied in e.g. \cite{Stroppel2012}. For $Q$ a finite-type quiver,  the quiver Schur algebra is Morita equivalent to the \textit{quiver-Hecke algebra}, also known as \textit{Khovanov-Lauda-Rouquier algebra}, which admits a well-known presentation as a diagrammatic algebra. Outside of finite type, as it is the case here, no explicit presentation of quiver-Schur algebras is known. 
\end{remark}

  Note that the canonical restriction map $\iota_{\nu,I}: A_{\alpha}^{\nu} \to A_{\alpha}^{\nu,I}$ is an algebra homomorphism for any $\nu$ and any $I \subset ]\nu-1,\nu]$. Moreover, there is an isomorphism of algebras
     $$A_\alpha^\nu \simeq \varprojlim \;A^{\nu,I}_\alpha$$
  where the limit is taken over all strict open intervals $I \subset ]\nu-1,\nu]$, with respect to inclusion, in the category of $\Seq(\alpha)$-graded vector spaces. Indeed, the substacks $Coh^{\nu,I}_\alpha$ form a quasi-compact open exhaustion of $Coh^\nu_\alpha$, hence their preimages under $\pi_{\ua} \times \pi_{\ua'}$ form a quasi-compact open exhaustion of $Z^\nu_{\ua,\ua'}$. 

\subsection{The limit construction}\label{sec:limit}
  In this section we use the quasi-compact approximation theorem to relate the algebra $A^{\frac{1}{2}}_\alpha$, whose structure is of interest, say, in the context of the geometric Langlands program (for $\P^1$) to the more studied Kronecker quiver Schur algebras $A^{\nu}_\alpha$ (for $\nu \not\in \Z + \frac{1}{2}$). We begin with the following consequence of Theorem~\ref{qcthm}.

\begin{proposition}
  Fix $\alpha \in (\Z^2)^+$, $\sigma \in ]\frac{1}{2},1]$ and set $I_\sigma=]\sigma-1,\frac12]$.  Let $\ua,\ua', \ua'' \in \Seq^{\frac12}(\alpha)$. Then there exists $n_0$ such that for any $n>n_0$ the following diagram of convolution products commutes
    $$\begin{tikzcd}
    \text{H}_{*}(Z^{\frac{1}{2},I}_{\ua,\ua'})\otimes \text{H}_{*}(Z^{\frac{1}{2},I}_{\ua',\ua''})\arrow[r,"*"]\arrow[d,"\Psi \otimes \Psi"']   &\text{H}_{*}(Z^{\frac{1}{2},I}_{\ua,\ua''})\arrow[d,"\Psi"]\\
    \text{H}_{*}(Z^{\nu_n,I}_{\ua,\ua'})\otimes \text{H}_{*}(Z^{\nu_n,I}_{\ua',\ua''})\arrow[r,"*"]   &\text{H}_{*}(Z^{\nu_n,I}_{\ua,\ua''})\ .
  \end{tikzcd}$$
\end{proposition}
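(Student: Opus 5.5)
The plan is to realize both convolution products on the same geometric data: the triple-fibre-product stack, the ambient smooth stacks, and the proper projection all get identified over $Coh^{I}_\alpha$. First, by Corollary~\ref{cor:QuasiCompact}(i), applied to the three sequences $\ua,\ua',\ua''$ and to the interval $I=I_\sigma$, there is $n_0$ such that for every $n>n_0$ we have canonical isomorphisms
$$\Psi^{\nu_n,I}_{\underline{\gamma}}:\widetilde{Coh}^{\frac12,I}_{\underline{\gamma}}\simeq\widetilde{Coh}^{\nu_n,I}_{\underline{\gamma}}, \qquad \underline{\gamma}\in\{\ua,\ua',\ua''\}.$$
By construction these isomorphisms sit over the common open substack $Coh^{I}_\alpha\subset Coh^{\frac12}_\alpha\cap Coh^{\nu_n}_\alpha$, i.e. they commute with $\pi^{\frac12}_{\underline{\gamma}}$ and $\pi^{\nu_n}_{\underline{\gamma}}$. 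They are canonical because both sides parametrize the same chains of subobjects of a fixed $F\in Coh^{I}_\alpha$, and the proof of Theorem~\ref{qcthm} shows these chains coincide in the two hearts. We take $n_0$ to be the maximum of the three thresholds.

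The second step is to localize the convolution over $Coh^I_\alpha$. Since the convolution product is local on the base $Coh^\nu_\alpha$, the restricted product on $\text{H}_*(Z^{\nu,I}_{\bullet,\bullet})$ can be computed using the open ambient stack
$$X^{\nu,I}=\widetilde{Coh}^{\nu,I}_{\ua}\times\widetilde{Coh}^{\nu,I}_{\ua'}\times\widetilde{Coh}^{\nu,I}_{\ua''}.$$
Alternatively, one can use the fibred version over $Coh^I_\alpha$, via the sheaf-theoretic description in Appendix~\ref{app:convolution product}, which is cleaner. In that version the product is the composition
$$\mathrm{Hom}(\pi_{\ua!}\Q,\pi_{\ua'!}\Q)\otimes\mathrm{Hom}(\pi_{\ua'!}\Q,\pi_{\ua''!}\Q)\to\mathrm{Hom}(\pi_{\ua!}\Q,\pi_{\ua''!}\Q)$$
restricted to $Coh^I_\alpha$. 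Either way, the relevant data are the smooth stacks $\widetilde{Coh}^{\nu,I}_{\underline\gamma}$, the projections $p_{ij}$, the fibre products $p_{12}^{-1}(Z)\times_X p_{23}^{-1}(Z)$, and the proper map $p_{13}$. The product $\Psi\times\Psi\times\Psi$ carries all of these for $\nu=\frac12$ to the corresponding ones for $\nu=\nu_n$. The reason is that the $Z^{\nu,I}$ are fibre products over $Coh^I_\alpha$ (the identity used in Corollary~\ref{cor:QuasiCompact}(ii)), and the isomorphisms commute with the maps to $Coh^I_\alpha$.

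Finally, isomorphisms of stacks are compatible with all operations entering the product: smooth pullback, refined intersection with supports in a smooth ambient stack (the dimensions of the $\widetilde{Coh}$'s agree, so the degree shifts match), and proper pushforward. Hence $\Psi(c*c')=\Psi(c)*\Psi(c')$. The only step needing care, and the main obstacle, is checking that restricting the global convolution (defined on $Z^{\nu}$ over all of $Coh^\nu_\alpha$) to the open piece is the same as convolving inside the open ambient stacks. I would handle this by the base-change property of the sheaf-theoretic construction for the open immersion $Coh^I_\alpha\hookrightarrow Coh^\nu_\alpha$. This is the same fact used to assert that $\iota_{\nu,I}$ is an algebra map, so it is available for both $\nu=\frac12$ and $\nu=\nu_n$.
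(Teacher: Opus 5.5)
Your proposal is correct and is essentially the paper's argument. The paper only says that the statement is proved in the same manner as Corollary~\ref{cor:QuasiCompact}, and you have spelled that out. For $n$ large, the isomorphisms $\Psi_{\underline{\gamma}}$ for $\underline{\gamma}\in\{\ua,\ua',\ua''\}$ commute with the projections to $Coh^{I}_\alpha$. They therefore identify the entire convolution diagram: the ambient smooth stacks, the triple fibre product, the proper map $p_{13}$ and the degree shift. Combined with the locality of convolution over the base, this gives the commutative square.
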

\begin{proof}
 The statement is proved in the same manner as Corollary \ref{cor:QuasiCompact}.
\end{proof}

  As a direct consequence of the above Proposition, we see that for any fixed $I=]\sigma-1,\frac12]$ with $\sigma \in ]\frac12,1]$, there are canonical isomorphisms $H_*(Z^{\nu_n,I}_{\ua,\ua'}) \simeq H_*(Z^{\nu_m,I}_{\ua,\ua'})$ for $n,m \gg 0$, in a way compatible with the convolution product. We may thus unambiguously write $$ \lim_n \;A^{\nu_n,I}_\alpha \simeq A^{\frac12,I}_\alpha\ ,$$
though one should be careful not to confuse this 'naive' limit with a projective limit. 

Moreover, by Proposition \ref{prop:purity_prop}, the restriction map $A_{\alpha}^{\frac{1}{2}}\to A^{\frac{1}{2},I}_{\alpha}$ is a surjection, and when $n$ is large enough, the restriction map $A^{\nu}_{\alpha}\to A^{\nu_n,I}_{\alpha}$ becomes surjective.  

\begin{corollary}\label{cor:LimitConstruction}
  There is a canonical isomorphism of algebras
 $$A_{\alpha}^{\frac{1}{2}}=\varprojlim\limits_{I}\lim \limits_{n}\,A^{\nu_n,I}_{\alpha}.$$
\end{corollary}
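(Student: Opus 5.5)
The plan is to assemble the isomorphism from three ingredients already established: the open exhaustion of $Z^{\frac12}_{\ua,\ua'}$ by the quasi-compact opens $Z^{\frac12,I}_{\ua,\ua'}$; the identification of $A^{\frac12,I}_\alpha$ with $A^{\nu_n,I}_\alpha$ for $n\gg0$ as algebras; and the compatibility of these identifications with restriction as $I$ shrinks.

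First I would work one block $(\ua,\ua')$ at a time. As noted after the definition of the Schur algebras, the preimages $Z^{\frac12,I}_{\ua,\ua'}$ of the quasi-compact opens $Coh^{I}_\alpha$ exhaust $Z^{\frac12}_{\ua,\ua'}$. By \eqref{E:BMhomologyasalim} this gives $\text{H}_*(Z^{\frac12}_{\ua,\ua'})=\varprojlim_I \text{H}_*(Z^{\frac12,I}_{\ua,\ua'})$. By Remark~\ref{rem:discrete_phases} the system may be indexed by the cofinal family $I=I_m$, $m\ge0$, and Proposition~\ref{prop:purity_prop} shows the transition maps are surjective. Since the restriction maps $\iota_{\frac12,I}$ are algebra morphisms, summing over $(\ua,\ua')$ in the category of $\Seq(\alpha)$-graded vector spaces yields $A^{\frac12}_\alpha\simeq\varprojlim_I A^{\frac12,I}_\alpha$ as algebras.

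Second, for fixed $I=I_m$ and fixed $\ua,\ua',\ua''$, I would invoke Corollary~\ref{cor:QuasiCompact}(ii), which gives isomorphisms $\Psi^{\nu_n,I}_{\ua,\ua'}$ for $n\ge n_0(\ua,\ua',I)$. The preceding Proposition shows that the induced maps on Borel--Moore homology intertwine the convolution products. Moreover, for $n,n'\gg0$ the resulting identifications between $\text{H}_*(Z^{\nu_n,I}_{\ua,\ua'})$ and $\text{H}_*(Z^{\nu_{n'},I}_{\ua,\ua'})$ are canonical, since both are induced from $Z^{\frac12,I}_{\ua,\ua'}$. Hence the naive limit $\lim_n A^{\nu_n,I}_\alpha$ is well defined blockwise and canonically isomorphic to $A^{\frac12,I}_\alpha$ as an algebra.

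The one subtlety is that $n_0$ depends on the pair $(\ua,\ua')$ and $\Seq^{\frac12}(\alpha)$ is infinite. I would handle this by defining $\lim_n$ block by block, which is legitimate because the product is computed blockwise, each product $A_{\ua,\ua'}\otimes A_{\ua',\ua''}\to A_{\ua,\ua''}$ involves only three sequences, and one can take the maximum of the three thresholds.

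Third, I would check that these identifications are compatible with restriction along $I'\subset I$. For $n$ large enough for both $I$ and $I'$, the isomorphism $\Psi^{\nu_n,I'}$ is the restriction of $\Psi^{\nu_n,I}$ to the open $Z^{\frac12,I'}_{\ua,\ua'}$. This is because both are constructed in the proof of Theorem~\ref{qcthm} and Corollary~\ref{cor:QuasiCompact} by the same pointwise identification of filtrations, namely the same subobjects of $F$ in $\Coh^{\frac12}$ and in $\Coh^{\nu_n}$. Open restriction commutes with the pullback isomorphisms, so the squares relating the restrictions $A^{\nu_n,I}_\alpha\to A^{\nu_n,I'}_\alpha$ with those on the $\frac12$ side commute. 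Taking $\varprojlim_I$ of the algebra isomorphisms from the second step and combining with the first step gives the stated canonical isomorphism.

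I expect this compatibility check, together with making the naive limit $\lim_n$ rigorous as an inverse-system object, to be the main point requiring care. Everything else is formal once Proposition~\ref{prop:purity_prop} provides surjectivity and the preceding Proposition provides multiplicativity.
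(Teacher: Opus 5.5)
Your proposal is correct and follows the paper's own argument. The quasi-compact open exhaustion gives $A^{\frac12}_\alpha\simeq\varprojlim_I A^{\frac12,I}_\alpha$, and Corollary~\ref{cor:QuasiCompact} together with the preceding Proposition gives the stabilized, convolution-compatible identification $\lim_n A^{\nu_n,I}_\alpha\simeq A^{\frac12,I}_\alpha$. Your blockwise definition and your compatibility check under restriction spell out what the paper leaves implicit.

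One detail should be added to the blockwise limit. Blocks indexed by sequences in $\Seq^{\nu_n}(\alpha)\setminus\Seq^{\frac12}(\alpha)$ can be nonzero for every $n$: for instance, for $n\ge 2$ one has $\sO(1-n)^{\oplus 2}\subset\sO$ in $\Coh^{\nu_n}$ with quotient $\sO(2-2n)[1]$. However, any fixed such sequence eventually leaves $\Seq^{\nu_n}(\alpha)$, because a class $(r,d)$ with $r<0$ lies in $K_0(\Coh^{\nu_n})^+$ only if $d\geq |r|n$. So if you index the blocks by all of $\Seq(\alpha)$, these contributions vanish in the limit.
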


\medskip
\subsection{Polynomial representation}
  Fixing $\ua_3=(\alpha)$ in \eqref{eq:def_conv_prod}, noting that 
  $Z^\nu_{\ua,\alpha}=\widetilde{Coh}^\nu_{\ua}$ and
  allowing $\ua_1,\ua_2$ to vary gives rise to a continuous representation 
  $$*:~\bigoplus_{\ua,\ua'} \text{H}_*(Z^{\nu}_{\ua,\ua'})\otimes \bigoplus_{\ua}\text{H}_*(\widetilde{Coh}^\nu_{\ua})\lrw \bigoplus_{\ua}\text{H}_*(\widetilde{Coh}^{\nu}_{\ua})$$
  of $A_{\alpha}^{\nu}$ on $P^\nu_\alpha:=\bigoplus_{\ua}\text{H}_*(\widetilde{Coh}^{\nu}_{\ua})$.
Using Lemma~\ref{lem:dimStrata}, we have an identification
    $$\bigotimes_{i}\text{H}^*(Coh^{\nu}_{\alpha_i}) \xrightarrow{\sim} \text{H}_*(\widetilde{Coh}^{\nu}_{\ua}), \qquad c \mapsto \pi_{\ua}^*(c) \cap [\widetilde{Coh}^\nu_{\ua}] $$
and hence a canonical isomorphism
$$P^\nu_\alpha=\bigoplus_{\ua \in \Seq^\nu(\alpha)} \bigotimes_{i} \text{H}^{*}(Coh^\nu_{\alpha_i}) .$$
When $\nu=\frac12$, Theorem \ref{ThmHeinloth} allows us to further identify $P^\nu_\alpha$ with an explicit direct sum of polynomial algebras. 

If $\nu\neq \frac{1}{2}$, from \cite[Proposition 4.4]{Przezdziecki2019} it follows that the polynomial representation $P^{\nu}_{\alpha}$ is faithful. 
We conjecture that the same holds for $\nu=\frac12$.

\begin{conjecture} \label{conj:PolyRep}
 For any $\alpha \in (\Z^2)^+$, the polynomial representation $P^{\frac12}_\alpha$ of $A^{\frac12}_\alpha$ is faithful.  
\end{conjecture}

\medskip
\section{PBW bases of Schur algebras {\texorpdfstring{$A_{\alpha}^{\nu}$}{Lg}}}\label{sec:PBW bases}

\medskip

PBW-type bases of Schur algebras associated to quivers have been considered, e.g. in \cite{Przezdziecki2019}. The aim of this section is to construct explicit topological bases of $A_\alpha^{\frac12}$ and $\sR_\alpha$. All of the constructions performed in Sections \ref{sec:4.1}-\ref{sec:proof PBW basis} can be done for any $\nu$ (and are typically much simpler for $\nu \not\in \frac12 + \Z)$; we drop the superscript $\nu$ in an effort to unburden the notation.

\subsection{Split and merge}\label{sec:4.1}
In this paragraph we set $\nu=\frac12$ and drop the superscript $\nu$.
  For $\alpha \in (\Z^2)^+$ and $\ua,\ua'\in \Seq(\alpha)$, we say $\ua'$ subdivides $\ua$, if $\ua'$ is obtained from $\ua$ by replacing each $\alpha_i$ with $\alpha^{(1)}_i,\cdots,\alpha_i^{(k_i)}$. For such pair of sequences, we associate two matrices as follows (where $s$ is the length of $\ua$):
  \[w_{\ua,\ua'}=\begin{bmatrix}
      \alpha_1^{(1)} &\cdots &\alpha_1^{(k_1)}\\
       & & &\alpha_2^{(1)} &\cdots &\alpha_2^{(k_2)} \\
      & & & & & &\ddots\\
      & & & & & & &\alpha_s^{(1)} &\cdots &\alpha_s^{(k_s)}
  \end{bmatrix}, \qquad  w_{\ua',\ua}=(w_{\ua,\ua'})^T\]
  There are closed embeddings:
    \[\begin{aligned}
    &i_{\ua,\ua'}:~\widetilde{Coh}_{\ua'}=Z_{\ua,\ua'}(w_{\ua,\ua'})\hookrightarrow Z_{\ua,\ua'}\\
    &i_{\ua',\ua}:~\widetilde{Coh}_{\ua'}=Z_{\ua',\ua}(w_{\ua',\ua})\hookrightarrow Z_{\ua',\ua}.
   \end{aligned}\]
  In particular, we define 
  \[S^{\ua}_{\ua'}:=(i_{\ua,\ua'})_*[Z_{\ua,\ua'}(w_{\ua,\ua'})]\in A_{\ua,\ua'},\qquad M^{\ua'}_{\ua}:=(i_{\ua',\ua})_*[Z_{\ua',\ua}(w_{\ua',\ua})]\in A_{\ua',\ua},\]
  and call them \textit{split} and \textit{merge}, respectively.

  It is customary to represent these elements diagrammatically (see e.g. \cite{Przezdziecki2019}). Take $\ua=(\alpha_s,\cdots,\alpha_1)$, and $\ua'=(\alpha_s,\cdots,\alpha_{i+1},\alpha'_i,\alpha''_i,\alpha_{i-1},\cdots,\alpha_1)$ for some $\alpha'_i+\alpha''_i=\alpha_i$. For such pair of sequences, we draw the corresponding split and merge as follows:
\begin{equation}
\begin{aligned}
  \begin{tikzpicture}[thick,scale=0.9,font=\footnotesize]
  \draw (1.5,0.5) parabola (1,0);
  \draw (1.5,0.5) parabola (2,0);
  \draw (1.5,1)--(1.5,0.5);
  \draw (3,1)--(3,0);
  \draw (4.5,1)--(4.5,0.5);
  \draw (4.5,0.5) parabola (5,0);
  \draw (4.5,0.5) parabola (4,0);
  \draw (6,1)--(6,0);

  \node at (0.5,0.5) {$ S_{\ua}^{\ua'}=$};
  \node[anchor=north] at (1,0) {$\alpha'_i$};  
  \node[anchor=north] at (2,0) {$\alpha''_i$};  
  \node[anchor=south] at (1.5,1) {$\alpha_i$}; 
  \node at (2.2,0.5) {:=};
  \node[anchor=north] at (3,0) {$\alpha_1$};
  \node[anchor=south] at (3,1) {$\alpha_1$};
  \node at (3.5,0.5) {$\cdots$};
  \node at (5.5,0.5) {$\cdots$};
  \node[anchor=north] at (4,0) {$\alpha''_i$};  
  \node[anchor=north]at (5,0) {$\alpha'_i$}; 
  \node[anchor=south] at (4.5,1) {$\alpha_i$};
  \node[anchor=north] at (6,0) {$\alpha_s$};
  \node[anchor=south] at (6,1) {$\alpha_s$};
  \node[anchor=west] at (6,0.5) {,};
  \end{tikzpicture}\qquad
 \begin{tikzpicture} [thick,font=\footnotesize]
  \draw (1.5,-0.5) parabola (1,0);
  \draw (1.5,-0.5) parabola (2,0);
  \draw (1.5,-0.5)--(1.5,-1); 
  \draw (3,0)--(3,-1);
  \draw (4.5,-0.5)--(4.5,-1);
  \draw (4.5,-0.5) parabola (5,0);
  \draw (4.5,-0.5) parabola (4,0);
  \draw (6,-1)--(6,0);

  \node at (0.5,-0.5) {$M^{\ua}_{\ua'}=$};
  \node[anchor=south] at (1,0) {$\alpha'_i$};
  \node[anchor=south] at (2,0) {$\alpha''_i$};  
  \node[anchor=north] at (1.5,-1) {$\alpha_i$};
  \node at (2.2,-0.5) {:=};
  \node[anchor=south] at (3,0) {$\alpha_1$};
  \node[anchor=north] at (3,-1) {$\alpha_1$};
  \node at (3.5,-0.5) {$\cdots$};
  \node at (5.5,-0.5) {$\cdots$};
  \node[anchor=south] at (6,0) {$\alpha_s$};
  \node[anchor=north] at (6,-1) {$\alpha_s$};
  \node[anchor=south] at (4,0) {$\alpha'_i$};
  \node[anchor=south] at (5,0) {$\alpha''_i$}; 
  \node[anchor=north] at (4.5,-1) {$\alpha_i$};
  \node[anchor=west] at (6,-0.5) {.};
  \end{tikzpicture}
\end{aligned}
\end{equation}
  We call such splits and merges $elementary$. We always read diagrams from top to bottom.  Using the associativity of the convolution product it is easy to see that every split (resp. merge) can be written as a product of elementary splits (resp. merges).

  Now let $\ua, \ua'$ be as above and set $\ua''=(\alpha_s,\cdots,\alpha_{i+1},\alpha''_i,\alpha'_i,\alpha_{-1},\cdots,\alpha_1)$. Consider the element $$C_{\ua',\ua''}:=S^{\ua'}_{\ua}*M^{\ua}_{\ua''}\in A_{\ua',\ua''}\ ,$$ which we will call an \textit{elementary crossing}. Diagrammatically, we may draw it as follows:
  \[\begin{tikzpicture}[thick,scale=0.9, font=\footnotesize]
  \draw (-2.5,-0.5) parabola (-3,0);
  \draw (-2.5,-0.5) parabola (-2,0);
  \draw (-2.5,-0.5)--(-2.5,-1);
  \draw (-2.5,-1) parabola (-3,-1.5);
  \draw (-2.5,-1) parabola (-2,-1.5);
  \draw (-1,-1.5)--(-1,0);
  \draw (0.5,-0.5) parabola (0,0);
  \draw (0.5,-0.5) parabola (1,0);
  \draw (0.5,-0.5)--(0.5,-1);
  \draw (0.5,-1) parabola (0,-1.5);
  \draw (0.5,-1) parabola (1,-1.5);
  \draw (2,-1.5)--(2,0);

  \node at (-3.7,-0.75) {$C_{\ua',\ua''}=$};
  \node at (-1.7,-0.75) {$:=$};
  \node[anchor=south] at (-3,0) {$\alpha'_i$};
  \node[anchor=south] at (-2,0) {$\alpha''_i$};
  \node[anchor=north] at (-3,-1.5) {$\alpha''_i$};
  \node[anchor=north] at (-2,-1.5) {$\alpha'_i$};
  \node[anchor=south] at (-1,0) {$\alpha_1$};
  \node at (-0.5,-0.75) {$\cdots$};
  \node[anchor=south] at (2,0) {$\alpha_s$};
  \node[anchor=south] at (0,0) {$\alpha'_i$};
  \node[anchor=south] at (1,0) {$\alpha''_i$};
  \node[anchor=north] at (0,-1.5) {$\alpha''_i$};
  \node[anchor=north] at (1,-1.5) {$\alpha'_i$};
  \node[anchor=north] at (-1,-1.5) {$\alpha_1$};
  \node at (1.5,-0.75) {$\cdots$};
  \node[anchor=north] at (2,-1.5) {$\alpha_s$};
  \end{tikzpicture}\]

   Denote by $\Delta_{\ua'}\subset\widetilde{Coh}_{\ua'}\times_{{Coh}_{a}}\widetilde{Coh}_{\ua'}$ the diagonal. For any
  \[c=c_1 \otimes \cdots \otimes c_s\in  \bigotimes_{i}\text{H}^*(Coh_{\alpha'_i})\]
  the element $c\cap [\Delta_{\ua'}] \in \text{H}_*(Z_{\ua',\ua'})$ acts on $\bigotimes_i\text{H}^*(Coh_{\alpha'_i}) \subset P_\alpha$ by multiplication by $c$. Diagrammatically, this is represented by the picture
   \[\begin{tikzpicture}[thick,scale=0.9, font=\footnotesize]
  \draw (-2.5,-1.5)--(-2.5,0);
  \draw (-1,-1.5)--(-1,0);
  \draw (0.5,-1.5)--(0.5,0);
  \draw (2,-1.5)--(2,0);
  \filldraw[color=black, fill=black](-2.5,-0.75) circle (.1);
  \filldraw[color=black, fill=black](-1,-0.75) circle (.1);
  \filldraw[color=black, fill=black](0.5,-0.75) circle (.1);
  \filldraw[color=black, fill=black](2,-0.75) circle (.1);
  \node at (-2.8,-0.7) {$c_1$};
  \node at (-1.3,-0.7) {$c_2$};
  \node at (1.05,-0.7) {$c_{s-1}$};
  \node at (2.3,-0.7) {$c_s$};
  \node at (-3.7,-0.75) {$c:=$};
  \node[anchor=north] at (-2.5,-1.5) {$\alpha'_1$};
  \node[anchor=south] at (-2.5,0) {$\alpha'_1$};
   \node[anchor=north] at (-1,-1.5) {$\alpha'_2$};
  \node[anchor=south] at (-1,0) {$\alpha'_2$};
   \node[anchor=north] at (0.5,-1.5) {$\alpha'_{s-1}$};
  \node[anchor=south] at (0.5,0) {$\alpha'_{s-1}$};
   \node[anchor=north] at (2,-1.5) {$\alpha'_S$};
  \node[anchor=south] at (2,0) {$\alpha'_s$};
  \node at (-0.25,-1.2) {$\cdots$};
  \node at (-0.25,-0.3) {$\cdots$};
  \end{tikzpicture}\]

As we will show in Section \ref{sec:PBW basis}, the KLR and Schur algebras $\sR_\alpha, A_\alpha$ are generated, as associative algebras, by elementary crossings, split and merge operators together with operators of multiplication by tautological classes. 

    
\subsection{Construction of sequences of matrices}\label{sec:def sequence matrices}
  To each $w \in W(\ua,\ua')$, we will associate a sequence
   \[\underline{w} := (w_1, w_2, \ldots, w_t)\]
   of 'elementary' (in particular, minimal for the partial order) matrices. Before describing the recipe, we give an example.

\begin{example}Let us consider the case of a $2\times 3$ matrix $w \in W(\ua,\ua')$. The process is best described by the diagram in Figure~\ref{fig:split cross and merge1}
\begin{figure}[h]
\centering
\[\begin{tikzpicture}[thick,scale=0.9, font=\footnotesize]
    \draw (-2,-0.2) parabola (-3,-0.6);
    \draw (-2,-0.2) parabola (-1,-0.6);
    \draw (-2,0.1)--(-2,-0.6);
    \draw (1,-0.2) parabola (0,-0.6);
    \draw (1,-0.2) parabola (2,-0.6);
    \draw (1,0.1)--(1,-0.6);
    \draw (-2, -1)--(-2,-2.9);
    \draw (-3, -1)--(-3,-4.8);
    \draw (2, -1)--(2,-4.8);
    \draw (1, -1)--(1,-2.9);
    \draw (-0.5,-1.3) parabola (-1,-1);
    \draw (-0.5,-1.3) parabola (0,-1);
    \draw (-0.5, -1.3)--(-0.5,-1.6);
    \node[anchor=north] at (-0.5,-1.5) {$w_{13}+w_{21}$};
    \draw (-0.5, -1.9)--(-0.5,-2.2);
    \draw (-0.5,-2.2) parabola (0,-2.5);
    \draw (-0.5,-2.2) parabola (-1,-2.5);
  \node[anchor=south] at (1,0) {$\alpha_2$};
  \node[anchor=south] at (-2,0) {$\alpha_1$};
  \node[anchor=south] at (-3,-1) {$w_{11}$};
  \node[anchor=south] at (-2,-1) {$w_{12}$};
  \node[anchor=south] at (-1,-1) {$w_{13}$};
  \node[anchor=south] at (0,-1) {$w_{21}$};
  \node[anchor=south] at (1,-1) {$w_{22}$};
  \node[anchor=south] at (2,-1) {$w_{23}$};
  \node[anchor=south] at (-1,-2.9) {$w_{21}$};
  \node[anchor=south] at (0,-2.9) {$w_{13}$};
  \draw (0.5,-3.2) parabola (0,-2.9);
    \draw (0.5,-3.2) parabola (1,-2.9);
    \draw (0.5, -3.2)--(0.5,-3.5);
    \node[anchor=north] at (0.5,-3.4) {$w_{13}+w_{22}$};
    \draw (0.5, -3.8)--(0.5,-4.1);
    \draw (0.5,-4.1) parabola (1,-4.4);
    \draw (0.5,-4.1) parabola (0,-4.4);
 \node[anchor=south] at (-1,-4.8) {$w_{12}$};
  \node[anchor=south] at (0,-4.8) {$w_{22}$};
\node[anchor=south] at (-2,-4.8) {$w_{21}$};
  \node[anchor=south] at (1,-4.8) {$w_{13}$};
    \draw (-1.5,-3.2) parabola (-2,-2.9);
    \draw (-1.5,-3.2) parabola (-1,-2.9);
    \draw (-1.5, -3.2)--(-1.5,-3.5);
    \node[anchor=north] at (-1.5,-3.4) {$w_{12}+w_{21}$};
    \draw (-1.5, -3.8)--(-1.5,-4.1);
    \draw (-1.5,-4.1) parabola (-1,-4.4);
    \draw (-1.5,-4.1) parabola (-2,-4.4);
    \draw (1.5,-5.1) parabola (1,-4.8);
    \draw (1.5,-5.1) parabola (2,-4.8);
    \draw (-2.5,-5.1) parabola (-3,-4.8);
    \draw (-2.5,-5.1) parabola (-2,-4.8);
    \draw (-0.5,-5.1) parabola (-1,-4.8);
    \draw (-0.5,-5.1) parabola (0,-4.8);
    \draw (-0.5, -5.1)--(-0.5,-5.4);
    \node[anchor=north] at (-0.5,-5.3) {$\alpha_2'$};
    \node[anchor=north] at (-2.5,-5.3) {$\alpha'_1$};
    \node[anchor=north] at (1.5,-5.3) {$\alpha'_3$};
    \draw (-2.5, -5.1)--(-2.5,-5.4);
    \draw (1.5, -5.1)--(1.5,-5.4);
  \end{tikzpicture}\]
  \caption{The split, cross and merge sequence}\label{fig:split cross and merge1}
  \end{figure}
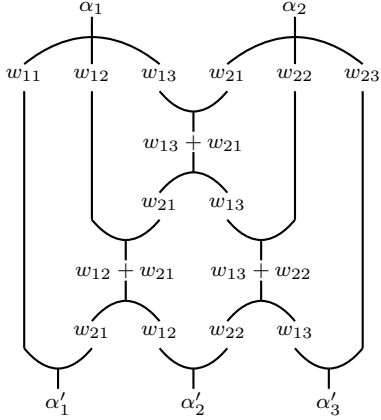
 to which is associated the following sequence of matrices. To the first split, we attach the matrix
$$\begin{bmatrix}w_{11} & w_{12} & w_{13} & & & \\ & & & w_{21} & w_{22} & w_{23}\end{bmatrix}\ ,$$
to the first crossing we attach the pair of matrices
$$\begin{bmatrix}w_{11} & & & & \\ & w_{12} & & & \\ & & w_{13} & & \\ & & w_{21} & & \\ & & & w_{22} & \\ & & & & w_{23}\end{bmatrix},\quad \begin{bmatrix}w_{11} & & & & & \\ & w_{12} & & & & \\ & & w_{21} & w_{13} & & \\  & & & & w_{22} & \\ & & & & & w_{23}\end{bmatrix},$$
and likewise for the next two crossings; finally to the last merge we associate the matrix
$$\begin{bmatrix}w_{11} & &\\ w_{21} & &\\ &w_{12} & \\ & w_{22} &\\ & & w_{13}\\ & & w_{23}\end{bmatrix}\ . $$
{\flushright{\qed}}
\end{example}

\medskip

In the general case, write $\ua=(\alpha_s,\alpha_{s-1},\ldots,\alpha_1),\; \ua'=(\alpha_l',\alpha'_{l-1},\ldots, \alpha'_1)$. We begin by applying a split operation to $\ua$ to decompose each $\alpha_i$ into an $r$-tuple
  \[
    (w_{il}, \ldots, w_{i2}, w_{i1}),
  \]
  to obtain the first row. Next, we perform successive individual crossings, each corresponding to a sequence of a merge followed by a split, in order to bring the first row $(w_{s,l}, w_{s,l-1}, \ldots, w_{12},w_{11})$ into
  $$(w_{s,l},w_{s-1,l}, \ldots, w_{1,l},\ldots, w_{s,2},\ldots, w_{12}, \ldots, w_{s,1}, \ldots ,w_{21},w_{11})\ .
  $$
  The sequence of crossings we choose is given by a reduced decomposition of the permutation described by the following picture (the choice of reduced decomposition is irrelevant)

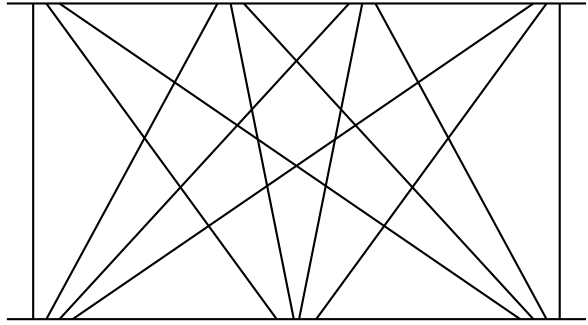
\begin{figure}[h]
\centering
\[\begin{tikzpicture}[thick,scale=0.7, font=\footnotesize]
\draw(-5.5,6)--(5.5,6);
\draw(-5.5,0)--(5.5,0);
    \draw (-5,6)-- (-5,0);
    \draw (-4.75,6)-- (-0.375,0);
    \draw (-4.5,6)-- (4.25,0);
    \draw (-1.5,6)-- (-4.75,0);
    \draw (-1.25,6)-- (-0.05,0);
    \draw (-1.,6)-- (4.5,0);
    \draw (1.,6)-- (-4.5,0);
    \draw (1.25,6)-- (0.05,0);
    \draw (1.5,6)-- (4.75,0);
    \draw (4.5,6)-- (-4.25,0);
    \draw (4.75,6)-- (0.375,0);
    \draw (5,6)-- (5,0);
  \end{tikzpicture}\]

\caption{The permutation associated to $s=4, l=3$.}\label{fig:reduced decomp}
\end{figure}

\vspace{.1in}
  
Finally, we perform $l$ merges, each of length $s$, to reach the tuple $\ua'$. The total number of crossings in the process is easily computed to be equal to $\frac14 ls(l-1)(s-1)$, to which we add the first split and the last merge. To each step is canonically associated a matrix, minimal for the partial order, as above. We denote by $$\underline{w}:= (w_1, \ldots, w_t)$$ the sequence of minimal matrices attached to our initial matrix $w \in W(\ua,\ua')$. We also let 
$$\underline{\beta}_0=\ua, \;\underline{\beta}_1,\; \ldots, \;\underline{\beta}_{t-1},\: \underline{\beta}_t=\ua'$$ be the sequence of elements in $\Seq(\alpha)$ such that $w_i\in W(\underline{\beta}_{i-1}, \underline{\beta}_i)$.
The integer $t=2+\frac12 ls(l-1)(s-1)$ may be thought of as some form of length function on $W(\ua,\ua')$.

\subsection{The projection map}\label{sec:projection map}
  For any $w\in W(\ua,\ua')$ with associated sequence $\underline{w}=(w_1,w_2,\cdots,w_t)$, set $\widetilde{S}(w):= \prod_i \widetilde{Coh}_{\underline{\beta_{i}}}$ and consider the stack
  $$S(w):=Z_1 \underset{\widetilde{S}(w)}{\times} Z_2 \underset{\widetilde{S}(w)}{\times} \cdots \underset{\widetilde{S}(w)}{\times} Z_t\ ,$$
  where
  $$Z_i=\widetilde{S}(w) \underset{\widetilde{Coh}_{\underline{\beta}_{i-1}} \times\widetilde{Coh}_{\underline{\beta}_{i}} }{\times} Z_{\underline{\beta}_{i-1},\underline{\beta}_i}(w_i)\ .$$
In words, $S(w)$ parametrizes tuples $(F, F_\cdot^{(0)}, \ldots, F_\cdot^{(t)})$ where $F \in Coh_\alpha$, $F_\cdot^{(i)}$ is a filtration of $F$ of type $\underline{\beta}_{i}$ for any $i$ and such that  the flags $F_\cdot^{(i-1)}, F_\cdot^{(i)}$ are in relative position $w_i$. 
  Note that the morphisms $Z_{\underline{\beta}_{i-1} ,\underline{\beta}_i}(w_i) \to \widetilde{Coh}_{\underline{\beta}_{i-1}} \times \widetilde{Coh}_{\underline{\beta}_{i}}$ are proper by the minimality of $w_i$ for all $i$. Thus the canonical morphism $S(w) \to \widetilde{S}(w)$ is proper as well. There are natural projection morphisms
  $$\Pi_{i,j}: {S}(w) \lrw \widetilde{Coh}_{\underline{\beta}_{i}} \times \widetilde{Coh}_{\underline{\beta}_{j}}$$
  and in particular a map $\Pi_{0,t}: {S}(w) \lrw \widetilde{Coh}_{\ua}\times \widetilde{Coh}_{\ua'}$.
We set
    $$U(w):=\Pi^{-1}_{0,t}(Z_{\ua,\ua'}(w))\ .$$
    
\begin{proposition}\label{prop:InverseImage}
 For any $\ua,\ua'\in \Seq(\alpha)$ and any $w\in W(\ua,\ua')$ we have
 \begin{equation}\label{eq:propInverseImage1}
\Pi_{0,t}(S(w))  \subseteq Z_{\ua,\ua'}(\leq w)\ ,
 \end{equation}
 \begin{equation}\label{eq:propInverseImage2}
(\Pi_{0,t})_{|U(w)}: U(w)\xrightarrow{\sim} Z_{\ua,\ua'}(w)\ . 
 \end{equation}
 In particular, $U(w)$ is an open substack of $S(w)$.
\end{proposition}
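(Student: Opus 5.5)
We argue step by step along the chain $\underline{\beta}_0,\ldots,\underline{\beta}_t$, reading each step of $\underline{w}$ in terms of the sheaf-theoretic data. A point of $S(w)$ consists of $F$ and filtrations $F^{(0)}_\cdot,\ldots,F^{(t)}_\cdot$. Since each $w_i$ is minimal, the relative position $w_i$ forces a rigid relation between consecutive filtrations.
\begin{itemize}
\item For the first split, $F^{(1)}_\cdot$ refines $F^{(0)}_\cdot$, and the pieces of $F^{(0)}_i/F^{(0)}_{i-1}$ have classes $w_{il},\ldots,w_{i1}$.
\item For each elementary crossing, $F^{(k)}_\cdot$ and $F^{(k-1)}_\cdot$ agree except at one position. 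There $F^{(k-1)}$ has $G\subset G'\subset G''$ with subquotients of classes $(a,b)$, and $F^{(k)}$ has $G\subset H\subset G''$ with classes $(b,a)$.
\item For the last merge, $F^{(t-1)}_\cdot$ refines $F^{(t)}_\cdot=F'_\cdot$.
\end{itemize}

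\textbf{Step 1: inclusion \eqref{eq:propInverseImage1}.} I prove by induction on $k$ that the composed relative position of $(F^{(0)}_\cdot,F^{(k)}_\cdot)$ is $\leq$ the matrix obtained by performing the first $k$ combinatorial operations on $w$. Concretely, I compare the classes $[F^{(0)}_i\cap F^{(k)}_j]$ with the corresponding partial sums. The key inequality is the following. After a crossing, the new intermediate piece $H$ satisfies $[F^{(0)}_i\cap H]\geq$ the class predicted by the permuted matrix. This holds because $H\cap F^{(0)}_i$ contains $G\cap F^{(0)}_i$, and its quotient maps into $G''/G$. Semicontinuity of the relevant partial sums then reduces each step to a rank count in the abelian category $\Coh\P^1$.

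At the end, the partial sums $w(F_\cdot,F'_\cdot)([1\!:\!i],[1\!:\!j])=[F_i\cap F'_j]$ dominate $w([1\!:\!i],[1\!:\!j])$. By the definition of $\leq$, this means $(F_\cdot,F'_\cdot)\in Z_{\ua,\ua'}(\leq w)$.

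\textbf{Step 2: isomorphism \eqref{eq:propInverseImage2}.} I construct an explicit inverse $Z_{\ua,\ua'}(w)\to U(w)$. Given $(F_\cdot,F'_\cdot)$ of type exactly $w$, I define every intermediate filtration $F^{(k)}_\cdot$ as a filtration of $F$ by sums of intersections $F_i\cap F'_j$, ordered according to the word of the permutation reached after $k$ steps. For instance:
\begin{itemize}
\item $F^{(1)}$ is the refinement of $F_\cdot$ by the pieces $F_{i-1}+F_i\cap F'_j$;
\item $F^{(t-1)}$ is the refinement of $F'_\cdot$ by the pieces $F'_{j-1}+F_i\cap F'_j$.
\end{itemize}
Because the type is exactly $w$, the lattice generated by $\{F_i\}$ and $\{F'_j\}$ is distributive, with subquotients of classes $w_{ij}$. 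So each such filtration has the correct type $\underline{\beta}_k$, and consecutive ones are in position $w_k$. This construction works in families, since the intersections are flat over the stratum $Z(w)$ (the stratum is an iterated vector bundle by Lemma~\ref{lem:dimStrata}).

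Conversely, I show that any point of $U(w)$ must have $F^{(k)}$ equal to these canonical filtrations. By the equality case of Step 1, every inequality is an equality, and this forces $H=G+(\text{the appropriate intersection})$ at each crossing. Hence the two maps are mutually inverse, functorially in the base.

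\textbf{Conclusion and main obstacle.} Openness of $U(w)$ follows from Lemma~\ref{lem:OpennessStrata}. There $Z(w)$ is open in $Z(\leq w)$, and by \eqref{eq:propInverseImage1} we have $S(w)=\Pi_{0,t}^{-1}(Z(\leq w))$, so $U(w)$ is open. The main obstacle is the equality-forcing argument in Step 2 at the crossings. One must check that equality of all partial-sum classes forces equality of subsheaves, not just of classes, and that this happens scheme-theoretically. For this I will use that an inclusion of coherent sheaves with the same class in $K_0$ is an equality, applied fiberwise together with flatness.
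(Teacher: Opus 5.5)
\textbf{Step 1.} Your induction over the slices, comparing $F^{(k)}_\cdot$ with $F_\cdot$ only, is a legitimate alternative to the paper's strand-by-strand estimate. However, the crossing bound is misstated. Suppose $H$ replaces $G'$ in $G\subset G'\subset G''$, where $[G'/G]=a$ comes from row $p$ of $w$ and $[G''/G']=b$ comes from row $q>p$.
\begin{itemize}
\item For $i<q$, use $F_i\cap G\subseteq F_i\cap H$.
\item For $i\geq q$, use the injection $(F_i\cap G'')/(F_i\cap H)\hookrightarrow G''/H$, whose target has class $a$.
\end{itemize}
The inclusion $G\subseteq H$ plus a map into $G''/G$ only gives an upper bound, and no semicontinuity is involved. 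The existence half of Step 2 is fine: two chains in the modular lattice of subobjects always generate a distributive sublattice, and type $w$ supplies the classes $w_{ij}$.

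\textbf{The gap: uniqueness in Step 2.} This is the real content of \eqref{eq:propInverseImage2}, and your argument does not establish it. Your inequalities only record the relative position of $(F_\cdot,F^{(k)}_\cdot)$, and that never determines $F^{(k)}_\cdot$. Moreover, your induction never reuses the bound on a piece that is later crossed out, so equality at the end gives no control over it.

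A concrete case already shows the problem. For $s=l=2$, the fiber of $S(w)$ over $(F_\cdot,F'_\cdot)$ consists of pairs $A=H^{(1)}_1$, $B=H^{(1)}_3$ with $A\subseteq F_1\cap F'_1$, $F_1+F'_1\subseteq B$, $[A]=w_{11}$ and $[B]=w_{11}+w_{12}+w_{21}$. Neither piece is created at a crossing. Every Step 1 inequality involving $B$ reduces to $F_1\subseteq B$, which holds for a whole Quot-scheme of candidates. $B$ is pinned down only by using $F'_1\subseteq B$, a constraint from the bottom flag, together with $[F_1+F'_1]=[B]$ in type $w$.

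So you need to track the intermediate pieces against \emph{both} flags. The paper does this as follows.
\begin{itemize}
\item It follows the strand of weight $w_{ij}$ from $H^{(1)}_{(i-1)l+j}\subseteq F_i$ down to $H^{(t-1)}_{(j-1)s+i}\subseteq F'_j$, so that $\bigcap_u L_u\subseteq F_i\cap F'_j$.
\item In type exactly $w$, all losses along every strand are therefore maximal. Since each crossing is met from the left by exactly one strand, every crossing is transversal: the left region is the intersection and the right region is the sum of the top and bottom regions.
\item The regions labelled by rectangular partitions are then identified with $F_i\cap F'_j$ by a containment plus equality of classes.
\item Every other region is forced as a sum of these.
\end{itemize}
Without an argument of this kind, your proposed inverse is only shown to be a section of $\Pi_{0,t}$ over $Z(w)$, not its inverse.
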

\begin{proof}
Fix $\ua,\ua' \in \Seq(\alpha)$ of length $s$ and $l$ respectively and let $w \in W(\ua,\ua')$. Let $(F_\cdot, G_\cdot) \in \widetilde{Coh}_{\ua} \times \widetilde{Coh}_{\ua'}$ and assume that there exists a sequence of flags $H^{(i)}_\cdot \in \widetilde{Coh}_{\underline{\beta}_i}$ for $i=0, \ldots, t$ such that $H^{(0)}_\cdot=F_\cdot, H^{(t)}_\cdot=G_\cdot$ and 
\begin{equation}\label{eq:propInverseImage3}
(H^{(i-1)}_\cdot, H^{(i)}_{\cdot})\in Z_{\underline{\beta}_{i-1},\underline{\beta}_i}(w_i), \qquad  (i=1, \ldots, t)\ .
\end{equation}
The Proposition may be deduced from the following two statements:

(a) $[F_i \cap G_j] \geq \sum_{k\leq i, h \leq j}w_{hk}$,

(b) If $(F_\cdot,G_\cdot) \in Z_{\ua,\ua'}(w)$ then there exists a unique collection of flags $(H^{(i)}_\cdot)_{i=0, \ldots, t}$ satisfying conditions \eqref{eq:propInverseImage3}.

 By construction, the factors of the flag $$H^{(1)}_\cdot=(H^{(1)}_1 \subseteq H^{(1)}_2 \subseteq \cdots \subseteq H^{(1)}_{sl})$$
are of class $w_{11}, w_{12}, \ldots, w_{s,l-1},w_{sl}$, and $F_i=H^{(1)}_{il}$ for $i=1, \ldots, s$. Likewise, the factors of the flag $$H^{(t-1)}_\cdot=H^{(t-1)}_1 \subseteq H^{(t-1)}_2 \subseteq \cdots \subseteq H^{(t-1)}_{ls}$$
are of class $w_{11}, w_{21}, \ldots, w_{s-1,l},w_{sl}$, and $G_i=H^{(t-1)}_{si}$ for $i=1, \ldots, l$. Next, consider Figure~\ref{fig:reduced decomp}.  Let us call \textit{weight} of a strand the class $w_{ij}$ attached to it. We may cut Figure~\ref{fig:reduced decomp} into $t$ horizontal slices, each one besides the first and the last corresponding to a single crossing, and a relative position of flags $(H^{(i)}_\cdot, H^{(i+1)}_\cdot)$. In particular, a slice with a crossing between the $\ell$th and $\ell+1$st strands corresponds to a pair of flags $(K_\cdot, L_\cdot)$ for which $K_j=L_j$ for all $j \neq \ell$. In that situation, we have the following lower bound for the class of $K_{\ell} \cap L_{\ell}$:
\begin{equation}\label{eq:propInverseImage4}
[K_{\ell} \cap L_{\ell}] \geq [K_{\ell}] + [L_\ell]-[L_{\ell+1}]=[K_\ell]-w
\end{equation}
where $w$ is the weight of the strand going from right to left.

 We next make the following combinatorial observation: there is a canonical bijection between the set of connected regions in the diagram~\ref{fig:reduced decomp} representing the sequence of crossings, and the set of partitions inside a rectangular box of size $(s,l)$. The bijection is obtained by considering, along each horizontal line, the weights of the strands crossed from left to right to reach a given region $R$, and associating the corresponding subset of the matrix $w=(w_{ij})_{i,j}$ (which forms a partition $\lambda$ by construction). An example is provided by Figure~\ref{fig:split cross and merge2} below.
 To any $(H^{(0)}_\cdot, \ldots, H^{(t)}_\cdot) \in S(w)$ there corresponds a filling of the regions of the diagram of crossings by the corresponding flags of subobjects so that one reads the filtration at step $1 \leq \ell \leq t-1$ by going along the horizontal line a level $\ell$ from left to right.

\smallskip

To prove (a), we will exhibit an explicit subobject $U \subseteq F_i \cap G_j$ of class at least $\sum_{k\leq i, h \leq j}w_{hk}$, built out of the flags $H^{(1)}_\cdot$ and $H^{(t-1)}_\cdot$.
We claim that $F_i \cap G_j$ has a subobject whose class is at least the sum of the weights of all strands whose (top) starting point is in position $\leq il$ and whose (bottom) ending point is in position $\leq sj$, i.e. all the strands starting from the first $i$ clusters on the top and ending in the first $j$ clusters at the bottom. This total weight is easily verified to be equal to $\sum_{h\leq i, k \leq j}w_{hk}$. We now prove the claim. Consider the strand of weight $w_{ij}$ linking the top position $(i-1)l+j$ to the bottom position $(j-1)s+i$. This strand crosses $(i-1)(l-j)$ strands coming from the left and $(j-1)(s-i)$ strands coming from the right. As we go from top to bottom, this strand corresponds to a sequence of objects, say 
$L_1, \ldots, L_t$, starting with $L_1=H^{(1)}_{(i-1)l+j} \subseteq F_i$ on the top slice and ending with $L_t=H^{(t-1)}_{(j-1)s+i} \subseteq G_j$ on the bottom slice. Note that $L_i \subset L_{i+1}$ if the $i$th slice corresponds to a crossing coming from the right. Applying the lower bound \eqref{eq:propInverseImage4} every time we encounter a crossing coming from the left we deduce that
$$\left[ \bigcap_{u=1}^t L_u \right] \geq \sum_{\substack{h< i\\ k \leq l}} w_{hk} + \sum_{k \leq j} w_{ik}-\sum_{\substack{h<i\\ k >j}} w_{hk}=\sum_{\substack{h\leq i\\ k \leq j}}w_{hk}\ .$$
This proves (a) since by construction $\left[ \bigcap_{u=1}^t L_u \right] \subseteq F_i \cap G_j$. 

We now turn to (b). If $(F_\cdot,G_\cdot) \in Z(w)$ then in the argument for (a) above, we necessarily have $\bigcap_{u=1}^t L_u=F_i \cap G_j$ for any pair $(i,j)$. This means that at every crossing corresponding to a pair of flags $(K_\cdot, L_\cdot)$ with $K_i=L_i$ for all $i \neq \ell$ we have
\begin{equation}\label{eq:propInverseImage5}
K_\ell \cap L_\ell =K_{\ell-1}=L_{\ell-1}, \qquad (K_{\ell}/K_{\ell-1}) \oplus (L_\ell/K_{\ell-1})=K_{\ell+1}/K_{\ell-1}\ .
\end{equation}
We deduce that the subobject $H_R$ associated to a given region $R$ is of class precisely given by the sum of the weights of the associated partition $\lambda_R$. Moreover, at every crossing, knowing the subobjects associated to the top and bottom regions is enough to determine the subobjects associated to the left and right regions (as the intersection, resp. sum). Graphically, this says that if $\lambda_R, \lambda_{R'}$ are partitions of the same size $n$ containing a common subpartition of size $n-1$ and if $\lambda_{R''}=\lambda_R \cup \lambda_{R'}$ then $H_{R''}=H_R + H_{R'}$. Arguing as in (a) above using the inclusion relation between subobjects as we go through crossings, one can see that the subobject $H_R$ corresponding to the region associated to a rectangular partition $\lambda=(i^j)$ contains $F_i \cap G_j$. The equation $[F_i\cap G_j]=\sum_{h\leq i, k \leq j}w_{ij}$ thus implies that $H_R=F_i \cap G_j$. This in turn fully determines the subobject associated to any region; indeed, any partition may be obtained by successive union of partitions containing one box less, starting from the collection of rectangular partitions.
\end{proof}
\medskip

\begin{example} Let us consider the case $s=2,r=3$ again. The correspondence between regions of the crossing diagram and the partitions fitting in a $2\times 3$ rectangle is given as follows:

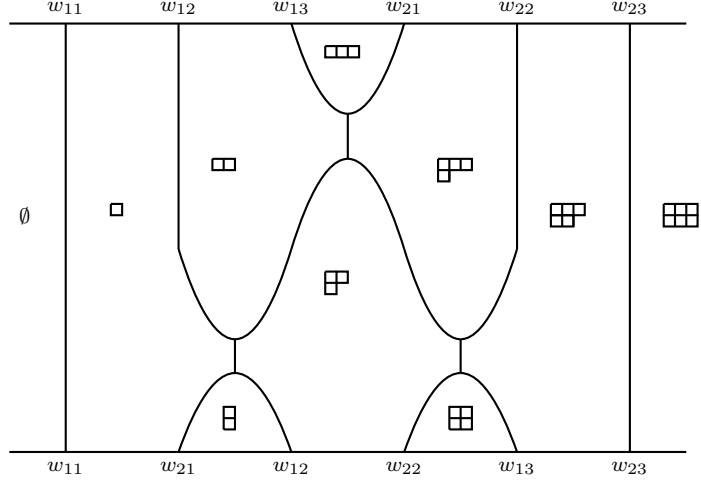
\begin{figure}[h]
\centering
\[\begin{tikzpicture}[thick,scale=1.5, font=\footnotesize]
    \draw (-3.5,-1)--(2.5,-1);
    \draw (-3.5,-4.8)--(2.5,-4.8);
    \draw (-2, -1)--(-2,-3);
    \draw (-3, -1)--(-3,-4.8);
    \draw (2, -1)--(2,-4.8);
    \draw (1, -1)--(1,-3);
    \draw (-0.5,-1.8) parabola (-1,-1);
    \draw (-0.5,-1.8) parabola (0,-1);
    \draw (-0.5, -1.8)--(-0.5,-2.2);
    \draw (-0.5,-2.2) parabola (0,-3);
    \draw (-0.5,-2.2) parabola (-1,-3);
    \draw (-2.6,-2.6)--(-2.5,-2.6)--(-2.5,-2.7) --(-2.6,-2.7)--(-2.6,-2.6);
     \draw (-1.7,-2.2)--(-1.7,-2.3)--(-1.5,-2.3) --(-1.5,-2.2)--(-1.7,-2.2);
     \draw (-1.6,-2.2)--(-1.6,-2.3);
     \draw (-0.7,-1.2)--(-0.7,-1.3)--(-0.4,-1.3) --(-0.4,-1.2)--(-0.7,-1.2);
     \draw (-0.6,-1.2)--(-0.6,-1.3);
      \draw (-0.5,-1.2)--(-0.5,-1.3);
     \draw (0.3,-2.2)--(0.3,-2.3)--(0.6,-2.3) --(0.6,-2.2)--(0.3,-2.2);
    \draw (0.4,-2.2)--(0.4,-2.4);
    \draw (0.5,-2.2)--(0.5,-2.3);
    \draw (0.3,-2.3)--(0.3,-2.4)--(0.4,-2.4) --(0.4,-2.3);
    \draw (1.3,-2.6)--(1.3,-2.7)--(1.6,-2.7) --(1.6,-2.6)--(1.3,-2.6);
    \draw (1.3,-2.7)--(1.3,-2.8)--(1.5,-2.8) --(1.5,-2.7)--(1.3,-2.7);
    \draw (1.4,-2.8)--(1.4,-2.6);
     \draw (1.5,-2.8)--(1.5,-2.6);
     \draw (2.3,-2.6)--(2.3,-2.7)--(2.6,-2.7) --(2.6,-2.6)--(2.3,-2.6);
     \draw (2.3,-2.7)--(2.3,-2.8)--(2.6,-2.8) --(2.6,-2.7);
     \draw (2.4,-2.8)--(2.4,-2.6);
     \draw (2.5,-2.8)--(2.5,-2.6);
    \draw (-0.7,-3.2)--(-0.7,-3.4)--(-0.6,-3.4) --(-0.6,-3.2)--(-0.5,-3.2)--(-0.5,-3.3)--(-0.7,-3.3);
    \draw (-0.7,-3.2)--(-0.6,-3.2);
    \draw (-1.6,-4.6)--(-1.5,-4.6)--(-1.5,-4.4)--(-1.6,-4.4)--(-1.6,-4.6);
    \draw (-1.6,-4.5)--(-1.5,-4.5);
    \draw (0.4,-4.6)--(0.6,-4.6)--(0.6,-4.4)--(0.4,-4.4)--(0.4,-4.6);
    \draw (0.5,-4.6)--(0.5,-4.4);
    \draw (0.4,-4.5)--(0.6,-4.5);
\node[anchor=west]  at (-3.5,-2.7) {$\emptyset$};
  \node[anchor=south] at (-3,-1) {$w_{11}$};
  \node[anchor=south] at (-2,-1) {$w_{12}$};
  \node[anchor=south] at (-1,-1) {$w_{13}$};
  \node[anchor=south] at (0,-1) {$w_{21}$};
  \node[anchor=south] at (1,-1) {$w_{22}$};
  \node[anchor=south] at (2,-1) {$w_{23}$};
  \draw (0.5,-3.8) parabola (0,-3);
    \draw (0.5,-3.8) parabola (1,-3);
    \draw (0.5, -3.8)--(0.5,-4.1);
    \draw (0.5,-4.1) parabola (1,-4.8);
    \draw (0.5,-4.1) parabola (0,-4.8);
\node[anchor=north] at (-3,-4.8) {$w_{11}$};
  \node[anchor=north] at (2,-4.8) {$w_{23}$};
 \node[anchor=north] at (-1,-4.8) {$w_{12}$};
  \node[anchor=north] at (0,-4.8) {$w_{22}$};
\node[anchor=north] at (-2,-4.8) {$w_{21}$};
  \node[anchor=north] at (1,-4.8) {$w_{13}$};
    \draw (-1.5,-3.8) parabola (-2,-3);
    \draw (-1.5,-3.8) parabola (-1,-3);
    \draw (-1.5, -3.8)--(-1.5,-4.1);
    \draw (-1.5,-4.1) parabola (-1,-4.8);
    \draw (-1.5,-4.1) parabola (-2,-4.8);
  \end{tikzpicture}\]
  \caption{Regions and partitions}\label{fig:split cross and merge2}
  \end{figure}

The subobjects associated to the partitions are
$$R_{\emptyset}=\{0\}, \qquad R_{(1)}=F_1\cap G_1, \qquad R_{(2)}=F_1\cap G_2, \qquad R_{(3)}=F_1,$$
$$R_{(31)}=F_1+ G_1, \qquad R_{(32)}=F_1+G_2, \qquad R_{(1^2)}=G_1, \qquad R_{(2^2)}=G_2,\qquad R_{(3^2)}=F_2=G_3\ .$$

\flushright{\qed}
\end{example}

\bigskip

\subsection{PBW bases}\label{sec:PBW basis} We finally arrive at the notion of PBW bases.

\subsubsection{Definition} Fix $\ua,\ua'\in \Seq(\alpha)$ and $w\in W(\ua,\ua')$ and denote by $\underline{w}=(w_1,\cdots,w_{t})$ the associated sequence, as in Section~\ref{sec:def sequence matrices}. 
Thanks to Proposition~\ref{prop:InverseImage}, we may define the following element of $\text{H}_{*}(Z_{\ua,\ua'}(\leq w))$:
    $$b_{w}:=[Z_{\ua,\underline{\beta}_1}(w_1)]*[Z_{\underline{\beta}_1,\underline{\beta}_2} (w_2)]*\cdots*[Z_{\underline{\beta}_{t-1},\ua'}(w_t)]\in \text{H}_*(Z_{\ua,\ua'}(\le w))\ .$$
More generally, for any
  $$c\in \Lambda_{w}:=\text{H}^*(\widetilde{Coh}_{\underline{\beta}_1})\cong  \bigotimes_{i,j}\text{H}^*(Coh_{w_{i,j}})$$
  define
    $$b_{c,w}:=[Z_{\ua,\underline{\beta}_1}(w_1)]*c*[Z_{\underline{\beta}_1,\underline{\beta}_2} (w_2)]*\cdots*[Z_{\underline{\beta}_{t-1},\ua'}(w_t)]\in \text{H}_*(Z_{\ua,\ua'}(\le w)).$$
Thus, for $c=\bigotimes c_{ij}$, the element $b_{c,w}$ corresponds to the diagram in Figure~\ref{fig:split cross and merge}.

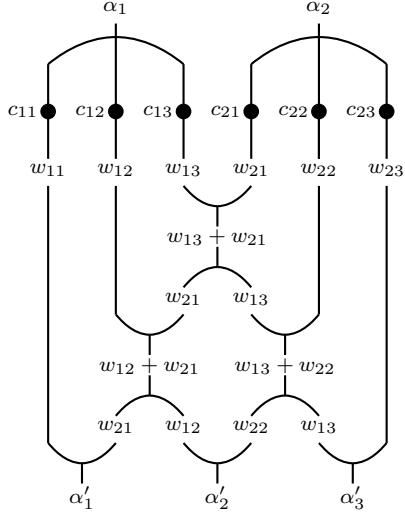
\begin{figure}[h]
\centering
\[\begin{tikzpicture}[thick,scale=0.9, font=\footnotesize]
    \draw (-2,1.2) parabola (-3,0.8);
    \draw (-2,1.2) parabola (-1,0.8);
    \draw (-2,1.4)--(-2,-0.6);
    \draw (1,1.4)--(1,-0.6);
    \draw (-3,0.8)--(-3,-0.6);
    \draw (-1,0.8)--(-1,-0.6);
    \draw (0,0.8)--(0,-0.6);
    \draw (2,0.8)--(2,-0.6);
    \filldraw[color=black, fill=black](-3,0.1) circle (0.1);
    
    \filldraw[color=black, fill=black](-2,0.1) circle (0.1);
    \filldraw[color=black, fill=black](-1,0.1) circle (0.1);
    \filldraw[color=black, fill=black](1,0.1) circle (0.1);
    \filldraw[color=black, fill=black](0,0.1) circle (0.1);
    \filldraw[color=black, fill=black](2,0.1) circle (0.1);
     \node[anchor=east] at (-3,0.1) {$c_{11}$};
     \node[anchor=east] at (-2,0.1) {$c_{12}$};
     \node[anchor=east] at (-1,0.1) {$c_{13}$};
     \node[anchor=east] at (0,0.1) {$c_{21}$};
     \node[anchor=east] at (1,0.1) {$c_{22}$};
     \node[anchor=east] at (2,0.1) {$c_{23}$};
    \draw (1,1.2) parabola (0,0.8);
    \draw (1,1.2) parabola (2,0.8);
    \draw (1,0.1)--(1,-0.6);
    \draw (-2, -1)--(-2,-2.9);
    \draw (-3, -1)--(-3,-4.8);
    \draw (2, -1)--(2,-4.8);
    \draw (1, -1)--(1,-2.9);
    \draw (-0.5,-1.3) parabola (-1,-1);
    \draw (-0.5,-1.3) parabola (0,-1);
    \draw (-0.5, -1.3)--(-0.5,-1.6);
    \node[anchor=north] at (-0.5,-1.5) {$w_{13}+w_{21}$};
    \draw (-0.5, -1.9)--(-0.5,-2.2);
    \draw (-0.5,-2.2) parabola (0,-2.5);
    \draw (-0.5,-2.2) parabola (-1,-2.5);
  \node[anchor=south] at (1,1.4) {$\alpha_2$};
  \node[anchor=south] at (-2,1.4) {$\alpha_1$};
  \node[anchor=south] at (-3,-1) {$w_{11}$};
  \node[anchor=south] at (-2,-1) {$w_{12}$};
  \node[anchor=south] at (-1,-1) {$w_{13}$};
  \node[anchor=south] at (0,-1) {$w_{21}$};
  \node[anchor=south] at (1,-1) {$w_{22}$};
  \node[anchor=south] at (2,-1) {$w_{23}$};
  \node[anchor=south] at (-1,-2.9) {$w_{21}$};
  \node[anchor=south] at (0,-2.9) {$w_{13}$};
  \draw (0.5,-3.2) parabola (0,-2.9);
    \draw (0.5,-3.2) parabola (1,-2.9);
    \draw (0.5, -3.2)--(0.5,-3.5);
    \node[anchor=north] at (0.5,-3.4) {$w_{13}+w_{22}$};
    \draw (0.5, -3.8)--(0.5,-4.1);
    \draw (0.5,-4.1) parabola (1,-4.4);
    \draw (0.5,-4.1) parabola (0,-4.4);
 \node[anchor=south] at (-1,-4.8) {$w_{12}$};
  \node[anchor=south] at (0,-4.8) {$w_{22}$};
\node[anchor=south] at (-2,-4.8) {$w_{21}$};
  \node[anchor=south] at (1,-4.8) {$w_{13}$};
    \draw (-1.5,-3.2) parabola (-2,-2.9);
    \draw (-1.5,-3.2) parabola (-1,-2.9);
    \draw (-1.5, -3.2)--(-1.5,-3.5);
    \node[anchor=north] at (-1.5,-3.4) {$w_{12}+w_{21}$};
    \draw (-1.5, -3.8)--(-1.5,-4.1);
    \draw (-1.5,-4.1) parabola (-1,-4.4);
    \draw (-1.5,-4.1) parabola (-2,-4.4);
    \draw (1.5,-5.1) parabola (1,-4.8);
    \draw (1.5,-5.1) parabola (2,-4.8);
    \draw (-2.5,-5.1) parabola (-3,-4.8);
    \draw (-2.5,-5.1) parabola (-2,-4.8);
    \draw (-0.5,-5.1) parabola (-1,-4.8);
    \draw (-0.5,-5.1) parabola (0,-4.8);
    \draw (-0.5, -5.1)--(-0.5,-5.4);
    \node[anchor=north] at (-0.5,-5.3) {$\alpha_2'$};
    \node[anchor=north] at (-2.5,-5.3) {$\alpha'_1$};
    \node[anchor=north] at (1.5,-5.3) {$\alpha'_3$};
    \draw (-2.5, -5.1)--(-2.5,-5.4);
    \draw (1.5, -5.1)--(1.5,-5.4);
  \end{tikzpicture}\]
  \caption{A PBW basis element}\label{fig:split cross and merge}
  \end{figure}

We set $$\mathbf{b}_{c,w}=(i_{\leq w})_*(b_{c,w}) \in \text{H}_*(Z_{\ua,\ua'}),$$
where $i_{\leq w}: Z_{\ua,\ua'}(\leq w) \to Z_{\ua,\ua'}$ is the closed embedding.

\medskip

\subsubsection{Main Theorem} The proof of the following result will occupy the next Section.
\begin{theorem}\label{thm:PBWbases}
  Fix $\alpha\in (\Z^2)^+$. For each $\ua,\ua'\in \Seq(\alpha)$ and $w\in W(\ua,\ua')$, choose a basis $\B_w$ of $\Lambda_{w}$. The set 
      \[\bigl\{ \mathbf{b}_{c,w} \ \big|~\ua,\ua'\in \mathrm{Seq}^{\frac{1}{2}}(\alpha),~\ 
      w\in W(\ua,\ua'),\ c\in \B_w \,\bigr\}\]
    forms a topological basis of $A^{\frac{1}{2}}_{\alpha}$.
\end{theorem}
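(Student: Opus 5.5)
The plan is the standard triangularity argument for PBW bases of convolution algebras (as in \cite{Przezdziecki2019}, \cite{Varagnolo2011}), run not on $Z_{\ua,\ua'}$ directly but on each quasi-compact open piece $Y^{\frac12,I_\sigma}_{\ua,\ua'}$ of the local exhaustion of Section~\ref{sec:LocalOpenExhaustion}, followed by passage to the limit in $\sigma$. The first step is to work in a fixed cohomological degree and with fixed $\ua,\ua'$. We use Proposition~\ref{prop:purity local}: the finite locally closed partition $Y^{\frac12,I}_{\ua,\ua'}=\bigsqcup_w Y^{\frac12,I}_{\ua,\ua'}(w)$ has cohomologically pure strata. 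Refining the partial order of Lemma~\ref{lem:OpennessStrata} on this finite set to a total order and applying Lemma~\ref{L:purity} repeatedly, we get a finite filtration of $\text{H}_*(Y^{\frac12,I}_{\ua,\ua'})$ by the images of $\text{H}_*(Y^{\frac12,I}_{\ua,\ua'}(\leq w))$. Its associated graded is $\bigoplus_w \text{H}_*(Y^{\frac12,I}_{\ua,\ua'}(w))$.

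The second step is to identify the leading term of $\mathbf{b}_{c,w}$. By \eqref{eq:propInverseImage1}, $b_{c,w}$ is supported on $Z_{\ua,\ua'}(\leq w)$. By \eqref{eq:propInverseImage2}, its restriction to the open stratum $Z_{\ua,\ua'}(w)\subset Z_{\ua,\ua'}(\leq w)$ is computed on $U(w)\simeq Z_{\ua,\ua'}(w)$. We must check that on $U(w)$ the iterated convolution is transversal, i.e. the fiber product $S(w)$ has the expected dimension there. This should follow from the uniqueness statement (b) and the dimension formula of Lemma~\ref{lem:dimStrata}. Granting this, the restriction of $b_{c,w}$ to $Z_{\ua,\ua'}(w)$ equals $\pi_w^*(c')\cap[Z_{\ua,\ua'}(w)]$, where $c'$ is the image of $c$ under the identification $\Lambda_w\simeq \text{H}^*(Z_{\ua,\ua'}(w))$ given by the iterated vector bundle $\pi_w$. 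This identification is compatible with the factorization through $\widetilde{Coh}_{\underline{\beta}_1}$ used in the proof of Proposition~\ref{prop:purity local}. Since $\pi_w$ is an iterated vector bundle, as $c$ ranges over $\B_w$ these leading terms form a basis of $\text{H}_*(Z_{\ua,\ua'}(w))$. Restricting to the opens $Y^{\frac12,I}(w)$, they span that graded piece, because restriction is surjective by purity.

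The third step combines the two. Triangularity with respect to the finite filtration shows that the restrictions of the $\mathbf{b}_{c,w}$ with $w\in W(\ua,\ua')$ and $Y^{\frac12,I}(w)\neq\varnothing$ form a basis of $\text{H}_*(Y^{\frac12,I}_{\ua,\ua'})$. The remaining $\mathbf{b}_{c,w}$ restrict to zero, since their support $Z(\leq w)$ avoids $Y^{\frac12,I}$; this needs care, since only the leading stratum is a priori absent. Next we note that $\text{H}_*(Z_{\ua,\ua'})=\varprojlim_\sigma \text{H}_*(Y^{\frac12,I_\sigma}_{\ua,\ua'})$, because the $Y^{\frac12,I_\sigma}$ form a quasi-compact open exhaustion and \eqref{E:BMhomologyasalim} applies. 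Density of the span then follows, since it surjects onto each term. For linear independence, a finite relation restricts to a relation on some $Y^{\frac12,I_\sigma}$ large enough to meet all strata involved. The KLR statement is the same argument restricted to discrete $\ua,\ua'$.

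The main obstacle is the second step: proving that $b_{c,w}$ restricts on $Z(w)$ exactly to $c\cap[Z(w)]$, with multiplicity one and no excess intersection. This requires the transversality of the iterated fiber product over $U(w)$ in this non-quasi-compact setting. A secondary issue is that $Z(\leq w)$ may contain infinitely many strata that also meet $Y^{\frac12,I}$. I would first try to handle this by reducing, via Proposition~\ref{prop:strata}, to $Z^{\nu_n,I}$ with $\nu_n$ close to $\frac12$, where the stratification is finite and the quiver argument of \cite{Przezdziecki2019} applies verbatim.
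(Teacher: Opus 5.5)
Your architecture matches the paper's: a finite filtration of $\text{H}_*(Y^{\frac12,I}_{\ua,\ua'})$ by pure strata, the leading term of $\mathbf{b}_{c,w}$ on the open cell $Z(w)\subset Z(\le w)$, density from the exhaustion, and independence via a maximal $w_0$. But your third step asserts something false, and your independence argument rests on it. The restrictions of the $\mathbf{b}_{c,w}$ do \emph{not} form a basis of $\text{H}_*(Y^{\frac12,I}_{\ua,\ua'})$; they only span it. The restriction $\text{H}_*(Z(w))\to \text{H}_*(Y^{\frac12,I}_{\ua,\ua'}(w))$ is surjective by purity but has a kernel in general. Already for $\ua=\ua'=((1,0))$ and $I=I_1$ one has $Y^{\frac12,I}_{\ua,\ua'}=Coh^{I_1}_{(1,0)}=B\G_m$, since the only object is $\sO$. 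Yet $\text{H}^2(Coh_{(1,0)})$ is two-dimensional, spanned by $ch_2(1)$ and $ch_1(\omega)$.

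So restricting a finite relation to a $Y^{I_\sigma}$ that merely meets all strata involved proves nothing: the leading part $\sum_c a_{c,w_0}c$ may vanish on $Y^{I_\sigma}(w_0)$. The missing step, which the paper uses, has two parts. First, choose $m$ depending on the relation so that $\iota^*_{m,w_0}\bigl(\sum_c a_{c,w_0}c\bigr)\neq 0$; this is possible because $\{Y^{I_m}_{\ua,\ua'}(w_0)\}_m$ is a pure open exhaustion of $Z(w_0)$. Second, conclude by injectivity of pushforward from the pure closed substack $X\cap Y^{I_m}_{\ua,\ua'}$. Here $X=\bigcup_w Z(\le w)$ over the finitely many $w$ in the relation, and $Z(w_0)$ is open in $X$ by maximality. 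With $m$ chosen this way, your triangular filtration argument on $Y^{I_m}$ also works.

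Your claim that $\mathbf{b}_{c,w}$ restricts to $0$ when $Y^{I}(w)=\varnothing$ is unjustified, since lower strata of $Z(\le w)$ can meet $Y^I$. It is also unnecessary, because density only uses spanning. Your ``secondary issue'' does not arise either. Only the finitely many $w\in W(\ua,\ua')^{\frac12,I}$ meet $Y^{\frac12,I}_{\ua,\ua'}\subset Z^{\frac12,I}_{\ua,\ua'}$ (Lemma~\ref{lem:finiteness}), so no passage to $Z^{\nu_n,I}$ is needed.

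For the leading-term step, which you leave as ``granting this'', your route differs from the paper's and does close. The paper proves that each elementary crossing is a clean intersection on the locus where the two sub-filtrations are complementary, using the tangent-complex splitting of Appendix C, and then iterates. Your route starts from \eqref{eq:propInverseImage2}: $S(w)$ is smooth along $U(w)\simeq Z(w)$. Then Lemma~\ref{lem:dimStrata} gives
\[\sum_{i=1}^{t}\dim Z(w_i)-\sum_{i=1}^{t-1}\dim\widetilde{Coh}_{\underline{\beta}_i}=\dim Z(w).\]
The reason is that a crossing of adjacent factors $x$ (below) and $y$ contributes $\langle y,x\rangle$. These contributions cancel, in $\dim\widetilde{Coh}_{\underline{\beta}_1}$, exactly the cross terms of pairs of entries that are incomparable for $\le$.

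A smooth intersection of smooth substacks having the expected dimension is transverse. Hence the restriction of $b_{c,w}$ to $Z(w)$ is $c\cap[Z(w)]$ with multiplicity one. For this you need \eqref{eq:propInverseImage2} as an isomorphism of stacks, not just a bijection on points, and you should say so explicitly.
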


The analog of this result in the case $\nu \not\in \frac12 + \Z$ appears in \cite[Theorem 3.25]{Przezdziecki2019}) in the context of (arbitrary) quivers. The proof of (ii) is necessarily more complex due to nature of the poset $W(\ua,\ua')^{\frac12}$ and the fact that $Z^{\frac12}_{\underline{\beta},\underline{\beta}'}(w)$ is only locally quasi-compact, see Section~\ref{sec:cellpartition}. 

\subsection{Proof of Theorem~\ref{thm:PBWbases}}\label{sec:proof PBW basis}
From now on, we will simply write $Z(\sigma)$ instead of $Z_{\beta,\beta'}(\sigma)$ --note that $\sigma$ determines $\beta ,\beta'$. Let us denote by $j_w: Z(w) \to Z(\leq w)$ the open embedding (see Lemma~\ref{lem:OpennessStrata}). 

\subsubsection{}We begin with the following preliminary result:

\begin{proposition}\label{prop:BasisforStata}
  Let $\B_{w}$ be a basis of $\Lambda_{w}$, then $\{j_w^*(b_{c,w})|\ c\in \B_{w}\}$ is a basis of $\text{H}_*(Z_{\ua,\ua'}(w),\Q)$.
\end{proposition}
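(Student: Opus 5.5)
The plan is to compute $j_w^*(b_{c,w})$ directly by base change to the open stratum, using Proposition~\ref{prop:InverseImage}. The convolution product $[Z(w_1)]*c*[Z(w_2)]*\cdots*[Z(w_t)]$ is, by repeated use of the definition \eqref{eq:def_conv_prod} and associativity, equal to $(\Pi_{0,t})_*\bigl(\tilde c\cap[S(w)]\bigr)$. Here $[S(w)]$ is the refined intersection class of the iterated fibre product $S(w)$ inside the smooth ambient $\widetilde S(w)$, and $\tilde c$ is the pullback of $c$ along $S(w)\to \widetilde{Coh}_{\underline\beta_1}$. I will check this locally over a quasi-compact open of $Coh_\alpha$, as in the definition of the convolution product. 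Restricting along the open immersion $j_w$ commutes with proper pushforward by base change. By \eqref{eq:propInverseImage2}, $\Pi_{0,t}^{-1}(Z(w))=U(w)$ and $\Pi_{0,t}|_{U(w)}$ is an isomorphism onto $Z(w)$. Hence $j_w^*(b_{c,w})$ is the image under this isomorphism of $\tilde c|_{U(w)}\cap [S(w)]|_{U(w)}$.

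The key step is to show that $[S(w)]|_{U(w)}$ is the fundamental class $[U(w)]$, i.e.\ that the iterated fibre product is transverse along $U(w)$. I would compare dimensions. By Lemma~\ref{lem:dimStrata}, $Z(w)$ is smooth of dimension $-\sum_{(i,j)\le(i',j')}\langle w_{i'j'},w_{ij}\rangle$. The expected dimension of $S(w)$ is $\sum_k\dim Z(w_k)-\sum_{k=1}^{t-1}\dim\widetilde{Coh}_{\underline\beta_k}$, and each $Z(w_k)$ is either a flag stack (split/merge) or a crossing cell. I expect a bookkeeping computation with the Euler form to show that each crossing of strands $w_{ij},w_{i'j'}$ contributes exactly the cross term $-\langle\cdot,\cdot\rangle$ missing from $\dim\widetilde{Coh}_{\underline\beta_1}$. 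The expected dimension then equals $\dim Z(w)$.

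Since $U(w)\simeq Z(w)$ is smooth of the expected dimension and is an open substack of $S(w)$, the local intersection is transverse. A cleaner route to transversality is to use the uniqueness in part (b) of the proof of Proposition~\ref{prop:InverseImage} at the tangent level. The $H^{(k)}$ are determined by intersections and sums, which deform flatly on $Z(w)$, so the tangent complexes match. I expect this transversality/excess-dimension check to be the main obstacle. The first thing I would try is the dimension count, since smoothness of $U(w)$ is already known.

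Granting transversality, $j_w^*(b_{c,w})=(\tilde c|_{Z(w)})\cap[Z(w)]$. The pullback of $c$ along $Z(w)\simeq U(w)\to\widetilde{Coh}_{\underline\beta_1}$ sends a pair $(F_\bdot,G_\bdot)$ to the refined flag with subquotients $(F_\bdot,G_\bdot)_{ij}$. Composed with $q_{\underline\beta_1}$, this is exactly $\pi_w$. By Lemma~\ref{lem:dimStrata}, $\pi_w$ and $q_{\underline\beta_1}$ are iterated vector bundle stacks, so pullback gives isomorphisms
\[
\Lambda_w\cong\textstyle\bigotimes_{ij}\text{H}^*(Coh_{w_{ij}})\xrightarrow{\ \sim\ }\text{H}^*(Z(w)).
\]
Since $Z(w)$ is smooth, capping with $[Z(w)]$ is an isomorphism onto $\text{H}_*(Z(w))$. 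Therefore $c\mapsto j_w^*(b_{c,w})$ is a linear isomorphism $\Lambda_w\to \text{H}_*(Z(w),\Q)$, and the images of $\B_w$ form a basis. Everything is compatible with the limits over quasi-compact opens, because $Z(w)$ is well-approximated.
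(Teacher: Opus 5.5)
Your proposal is correct, but it proves the key identity $j_w^*(b_{1,w})=[Z(w)]$ by a different route from the paper.

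\emph{The paper's route.} It works crossing by crossing. It first shows that each merge--split pair $[Z(w_{2j})]*[Z(w_{2j+1})]$ restricts to $[U_j]$ on the open locus where the two middle subobjects are complementary. This is done by an explicit splitting of tangent complexes built from Lemma~\ref{lem:app differential}, which exhibits a clean intersection. It then reduces to the product of the closure classes $[\overline{Z(w^{(j)})}]$, implicitly using \eqref{eq:propInverseImage5}, which says that points of $U(w)$ pass through the loci $U_j$. Finally it states that this last product restricts to $[Z(w)]$ ``in exactly the same fashion''.

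\emph{Your route.} You treat the whole chain as one quasi-smooth fibre product and rule out excess intersection along $U(w)$ by a single global dimension count. Smoothness of $U(w)\simeq Z(w)$ is supplied by Proposition~\ref{prop:InverseImage} and Lemma~\ref{lem:dimStrata}. This handles the full product uniformly, including the step the paper leaves implicit, and needs no differential computation. The price is that you rely on $U(w)\simeq Z(w)$ being an isomorphism of stacks, not merely a bijection on points; you rightly flag this.

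Two points to tighten when you write it out.

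First, your bookkeeping has the direction reversed. $\dim\widetilde{Coh}_{\underline\beta_1}$ contains \emph{extra} terms $-\langle w_{i'j'},w_{ij}\rangle$ for $i<i'$, $j>j'$, which are exactly the pairs of strands that cross. When these two strands cross, say at the $k$th crossing, the expected dimension increases by $\dim\widetilde{Coh}_{\underline\beta_{2k+1}}-\dim\widetilde{Coh}_{\underline\beta_{2k}}=+\langle w_{i'j'},w_{ij}\rangle$. These contributions cancel the extra terms, so the expected dimension of $S(w)$ is indeed $\dim Z(w)$.

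Second, for Artin stacks the implication ``smooth of the expected dimension $\Rightarrow$ transverse'' should be justified through the tangent complex. At every point, the derived and classical fibre products have the same $h^{-1}$ and $h^0$. The Euler characteristic count therefore forces the obstruction space $h^1$ to vanish on $U(w)$. Hence the derived fibre product is classical and smooth there, and the Gysin class restricts to the fundamental class $[U(w)]$.
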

\begin{proof}
By Lemma~\ref{lem:dimStrata}, the stack $Z(w)$ carries tautological bundles $E_{ij}$ pulled back from the morphism to $Coh_{w_{ij}}$, the Künneth components of whose Chern classes (freely) generate $\text{H}^*(Z(w),\Q)$. Note that by the proof of Proposition~\ref{prop:InverseImage}, the diagram
$$ \begin{tikzcd}[column sep=large]
    U(w)\arrow[d,"\Pi_{0,t}","\simeq"'] \arrow[r,"pr_1"]  &\widetilde{Coh}_{\underline{\beta}_{1}} \arrow[d,"q"] \\
    Z_{\ua,\ua'}(w) \arrow[r,"\pi_w"]&\prod_{i,j}Coh_{w_{ij}}\\
   \end{tikzcd}
$$
is commutative. Because the maps 
$$q^*: \bigotimes_{i,j}\text{H}^*(Coh_{w_{ij}},\Q) \to \text{H}^*(\widetilde{Coh}_{\underline{\beta}_1},\Q),$$
$$\pi_w^*: \bigotimes_{i,j}\text{H}^*(Coh_{w_{ij}},\Q) \to \text{H}^*(Z(w),\Q)$$ are isomorphisms and $Z(w)$ is smooth, it suffices to show that 
\begin{equation}\label{eq:proof PBW basis thm1}
j_w^*(b_{1,w})=[Z(w)] \in \text{H}_*(Z(w),\Q).
\end{equation}
For $1<i<t$ and $i$ odd (resp. even), the matrix $w_i$ represents an elementary merge (resp. split). There are thus canonical projections $\widetilde{Coh}_{\underline{\beta}_{2j\pm 1}} \to \widetilde{Coh}_{\underline{\beta}_{2j}}$ for any $j$, and we have
$$Z(w_{2j})=\widetilde{Coh}_{\underline{\beta}_{2j- 1}} \underset{\widetilde{Coh}_{\underline{\beta}_{2j}} }{\times} \widetilde{Coh}_{\underline{\beta}_{2j}} \simeq \widetilde{Coh}_{\underline{\beta}_{2j-1}}\ ,$$
$$Z(w_{2j+1})=\widetilde{Coh}_{\underline{\beta}_{2j}} \;\underset{\widetilde{Coh}_{\underline{\beta}_{2j}} }{\times} \widetilde{Coh}_{\underline{\beta}_{2j+1}} \simeq \widetilde{Coh}_{\underline{\beta}_{2j+1}}\ .$$
Let us denote by
$$\kappa_j:U_j \to \widetilde{Coh}_{\underline{\beta}_{2j-1}} \;\underset{\widetilde{Coh}_{\underline{\beta}_{2j}} }{\times} \widetilde{Coh}_{\underline{\beta}_{2j+1}}$$
the inclusion of the open substack parametrizing pairs of filtrations $(F_\cdot,G_\cdot)$ differing in a single index, say 
$$F_k=G_k \subset F_{k+1},G_{k+1} \subset F_{k+2}=G_{k+2}$$
and such that $F_{k+1}/F_k$ and $G_{k+1}/F_k$ are supplementary subspaces in $F_{k+2}/F_k$. Observe that $U_j=Z(w^{(j)})$, where
$w^{(j)}$ is a suitable matrix in $ W(\underline{\beta}_{2j-1}, \underline{\beta}_{2j+1})$ of the form
$$w^{(j)}=\begin{bmatrix} 
 w_{11} \\
 &\ddots\\
 & & &w_{ab}\\
 & &w_{cd}\\
 & & & & \ddots\\
 & & & & &w_{sr}
 \end{bmatrix}\ .$$
 In particular, $U_j$ is irreducible.

 \begin{lemma}\label{eq: proof PBW bases 2}
We have $\kappa_j^*\left([Z(w_{2j})] \star [Z(w_{2j+1})]\right)=[U_j]$.     
\end{lemma}
\begin{proof}
Let us write 
$$\underline{\beta}_{2j-1}=(a_1, a_2, \ldots, \delta,\beta, \ldots, a_N),$$
$$\underline{\beta}_{2j}=(a_1, a_2, \ldots, \delta+\beta, \ldots, a_N),$$
$$\underline{\beta}_{2j+1}=(a_1, a_2, \ldots, \beta,\delta, \ldots, a_N),$$
Using the smooth morphisms
$$\widetilde{Coh}_{\underline{\beta}_{2j- 1}} \to Coh_{a_1} \times Coh_{a_2} \times \cdots \times \widetilde{Coh}_{\delta,\beta} \times \cdots Coh_{a_N},$$
$$\widetilde{Coh}_{\underline{\beta}_{2j}} \to Coh_{a_1} \times Coh_{a_2} \times \cdots \times {Coh}_{\delta+\beta} \times \cdots Coh_{a_N},$$
$$\widetilde{Coh}_{\underline{\beta}_{2j+ 1}} \to Coh_{a_1} \times Coh_{a_2} \times \cdots \times \widetilde{Coh}_{\beta,\delta} \times \cdots Coh_{a_N},$$
we easily reduce ourselves to the case of a convolution product
\begin{equation*}
\begin{split}
\text{H}_*\left(\widetilde{Coh}_{\delta,\beta} \underset{Coh_{\delta+\beta}}{\times} Coh_{\delta+\beta},\Q\right) \otimes &\text{H}_*\left({Coh}_{\delta+\beta} \underset{Coh_{\delta+\beta}}{\times} \widetilde{Coh}_{\beta,\delta},\Q\right)\\
&\to \text{H}_*\left(\widetilde{Coh}_{\delta,\beta} \underset{Coh_{\delta+\beta}}{\times} \widetilde{Coh}_{\beta,\delta},\Q\right)
\end{split}
\end{equation*}
restricted to the open subset $U\subset \widetilde{Coh}_{\delta,\beta} \underset{Coh_{\delta+\beta}}{\times} \widetilde{Coh}_{\beta,\delta}$ parametrizing pairs $(F_1,G_1 \subset F_2=G_2)$ such that $F_1 \oplus G_1=F_2$. Note that $U \simeq Coh_\delta \times Coh_\beta$. Let us write $\gamma=\delta+\beta$,
$$Z_1 :=\widetilde{Coh}_{\delta,\beta} \underset{Coh_{\gamma}}{\times} Coh_{\gamma} \times \widetilde{Coh}_{\beta,\delta} \qquad  Z_2:=\widetilde{Coh}_{\delta,\beta} {\times} Coh_{\gamma}  \underset{Coh_{\gamma}}{\times}\widetilde{Coh}_{\beta,\delta}\ ,$$ 
$$X:= \widetilde{Coh}_{\delta,\beta} {\times} Coh_{\gamma} \times \widetilde{Coh}_{\beta,\delta}\ .$$
 Observe that $$Z_1 \underset{X}{\times} Z_2 =\widetilde{Coh}_{\delta,\beta} \underset{Coh_{\gamma}}{\times} \widetilde{Coh}_{\beta,\delta}$$ and denote by $p_i: Z_i \to X$, $i=1,2$ and $p: Z_1 \underset{X}{\times} Z_2 \to X$ the natural morphisms.
The stack $X \times \mathbb{P}^1$ carries tautological bundles $E_\delta^{(1)}, E_\beta^{(1)}, E_\gamma, E_\beta^{(2)}, E_{\delta}^{(2)}$. For simplicity, we denote by the same symbol their pullbacks by the maps $p_i, p$. Over $Z_1 \underset{X}{\times} Z_2$, we have canonical short exact sequences
$$0 \to E_\beta^{(1)} \to E_\gamma \to E_\delta^{(1)} \to 0, \qquad 0 \to E_\delta^{(2)} \to E_\gamma \to E_\beta^{(2)} \to 0\ .$$
Recall that for any $\sigma$, $\mathbb{T}_{Coh_\sigma} = \text{R}Hom(E_\sigma,E_\sigma)[1]$.
Thanks to Lemma~\ref{lem:app differential} there are distinguished triangles
$$\mathbb{T}_{\widetilde{Coh}_{\delta,\beta}} \xrightarrow{dp_{\delta,\beta}} \text{R}Hom(E_\gamma,E_\gamma)[1] \longrightarrow \text{R}Hom(E^{(1)}_\beta,E^{(1)}_\delta)[1]\xrightarrow{+1} $$
$$\mathbb{T}_{\widetilde{Coh}_{\beta,\delta}} \xrightarrow{dp_{\beta,\delta}}\text{R}Hom(E_\gamma,E_\gamma)[1] \longrightarrow\text{R}Hom(E^{(2)}_\delta,E^{(2)}_\beta)[1]\xrightarrow{+1} \ .$$
Over the open substack $U \subset Z_1 \underset{X}{\times} Z_2$, we have identifications $E_\delta^{(1)} \simeq E_\delta^{(2)},E_\beta^{(1)} \simeq E_\beta^{(2)}$ and, dropping the (superfluous) superscript, $E_\gamma \simeq E_\delta \oplus E_\beta$. This gives rise to a commutative diagram
$$ \begin{tikzcd}[column sep=large]
    & \text{R}Hom(E_\delta,E_\beta)[1] \arrow[dl] &\\
    \mathbb{T}_{\widetilde{Coh}_{\delta,\beta}} \arrow[r,"dp_{\delta,\beta}"'] &\text{R}Hom(E_\gamma,E_\gamma)[1] \arrow[u] \arrow[r] & \text{R}Hom(E_\beta,E_\delta)[1] \arrow[dl]\\
    & \mathbb{T}_{\widetilde{Coh}_{\beta,\delta}} \arrow[u,"dp_{\beta,\delta}"] &
   \end{tikzcd}
$$
and a compatible splitting
$$\text{R}Hom(E_\gamma,E_\gamma)[1] \simeq \text{R}Hom(E_\delta,E_\beta)[1] \oplus \text{R}Hom(E_\beta,E_\delta)[1] \oplus \mathbb{T}_{Coh_\delta \times Coh_\beta}\ .$$
From this we deduce a decomposition over $U$:
$$\mathbb{T}_{Z_1} \oplus \mathbb{T}_{Z_2}=p^*(\mathbb{T}_X) \oplus \mathbb{T}_{Coh_\delta \times Coh_\beta}\ ,$$
which implies that $U$ has no derived structure, i.e. that the complex $\mathbb{T}_{Z_1} \oplus \mathbb{T}_{Z_2} \to p^*(\mathbb{T}_{X})$ has Tor amplitude in $[0,1]$. We are thus in the situation of a 'clean intersection', in the terminology of \cite[Sec.2.6.]{Chriss2010}). We deduce from \cite[Prop. 2.6.47]{Chriss2010} that
$([Z_1]* [Z_2])_{|U} =[U]$ as wanted.
\end{proof}

We may now finish the proof of Proposition~\ref{prop:BasisforStata}. Thanks to Lemma~\ref{eq: proof PBW bases 2}, we are reduced to checking the following equality
\begin{equation}\label{eq:proof prop pbw 3}
j_w^*\left([Z(w_1)]* [\overline{Z(w^{(1)})}]* \cdots * [\overline{Z(w^{(n)})}]* [Z(w_t)]\right)=[Z(w)] 
\end{equation}
where $n=\frac{t-2}{2}$. This is done in exactly the same fashion as above.

\end{proof}

\medskip

\subsubsection{} Let us next show that the elements $\{\mathbf{b}_{c,w}\;|\; w \in W(\ua,\ua'), c \in \mathbb{B}_w\}$ span a dense subspace of $A_{\ua,\ua'}$. Recall that by Proposition~\ref{prop:purity local} the stack
   \[Y^{I}_{\ua,\ua'} =Z^I_{\ua,\ua'}\times_{\widetilde{Coh}_{\ua}} \widetilde{Coh}_{\ua}^{I}=\bigsqcup_{w\in W(\ua,\ua')^I} Y^I_{\ua,\ua'}(w),\]
  is cohomologically pure (see Section \ref{sec:LocalOpenExhaustion}). Moreover, the open exhaustion of $Z_{\ua,\ua'}$ by the stacks $Y^I_{\ua,\ua'}$ yields an isomorphism
   \[\text{H}_*(Z_{\ua,\ua'})=\varprojlim_m \text{H}_*(Y_{\ua,\ua'}^{I_m}).\] 
  We will show that for any $m \geq 1$ the elements
   $$\{\iota_{m}^*(\mathbf{b}_{c,w})|~w\in W(\ua,\ua')^{I_m},~c\in \B_{w}\}$$
  linearly span $\text{H}_*(Y_{\ua,\ua'}^{I_m})$, where $\iota_{m}: Y_{\ua,\ua'}^{I_m}\to Z_{\ua,\ua'}$ is the open embedding.

  Fix $m\ge 1$. Because $W(\ua,\ua')^{I_m}$ is finite by Lemma \ref{lem:finiteness},  we can refine the partial order $\le$ on $W(\ua,\ua')^{I_m}$ to a linear order $\trianglelefteq$. By the purity of each of the strata $Y^{I_m}_{\ua,\ua'}(w)$, this defines a finite filtration on $\text{H}_*(Y^{I_m}_{\ua,\ua'})$ whose associated graded is 
  \begin{equation}\label{eq:filtration Y}
  \text{gr}_\bullet\text{H}_*(Y_{\ua,\ua'}^{I_m})
  =\bigoplus_{w\in W(\ua,\ua')^{I_m}} \text{H}_{*}(Y^{I_m}_{\ua,\ua'}(w)).
  \end{equation}
   Fix some $w_0 \in W(\ua,\ua')^{I_m}$ and consider the following commutative diagram with Cartesian squares
  $$\begin{tikzcd} 
  Z_{\ua,\ua'}(w_0)\arrow[r,hook,"j_{w_0}"] &Z_{\ua,\ua'}(\le w_0)\arrow[r,"i_{\leq w_0}"] &Z_{\ua,\ua'}\\
  Y^{I_m}_{\ua,\ua'}(w_0) \arrow[r,hook,"j_{Y,w_0}"]\arrow[u,hook,"\iota_{m,w_0}"]      &Y^{I_m}_{\ua,\ua'}(\le w_0) \arrow[u,hook,"\iota_{m,\leq w_0}"]\arrow[r,"i_{Y,\leq w_0}"] &Y^{I_m}_{\ua,\ua'}\arrow[u,hook,"\iota_{m}"],
  \end{tikzcd}$$
  in which the vertical morphisms are open embeddings.
By base change we have
$$\iota_m^*(\mathbf{b}_{c,w_0})=\iota_{m}^*(i_{\leq w_0})_*(b_{c,w_{0}})=(i_{Y,\leq w_0})_*\iota_{m,\leq w_0}^*(b_{c,w_{0}}).$$
Moreover,
$$\iota_{m,w_0}^*(j_{w_0})^*(b_{c,w_{0}})=(j_{Y,w_0})^*\iota_{m,\leq w_0}^*(b_{c,w_{0}}).$$
By purity of $Y^{I_m}_{\ua,\ua'}(w_0)$ again, the restriction morphism 
   $$\iota_{m,w_0}^*:\text{H}_*(Z_{\ua,\ua'}(w_0))\to \text{H}_*(Y^{I_m}_{\ua,\ua'}(w_0))$$
  is surjective. From the above, we see that the elements $\{\iota^*_{m,\leq w_0}({b}_{c,w_0})\;|\;c \in \B_{w_0}\}$ span a subspace of $\textbf{H}_*(Y^{I_m}_{\ua,\ua'}(\leq w_0))$ which surjects onto $\textbf{H}_*(Y^{I_m}_{\ua,\ua'}(w_0))$. The desired claim follows thanks to \eqref{eq:filtration Y}.
  
\medskip

\subsubsection{}To finish the proof of Theorem~\ref{thm:PBWbases}, it remains to show the linear independence of the set $\{\mathbf{b}_{c,w}\;|\;w \in W(\ua,\ua'), c \in \B_w\}$. Let $\mathbf{b}=\sum_{c,w} a_{c,w} \mathbf{b}_{c,w} \in \text{H}_*(Z_{\ua,\ua'})$ be a nontrivial (finite) linear combination. Let $w_0$ be a maximal element for the partial order $\leq$ on $W(\ua,\ua')$ for which there exists $c \in \B_{w_0}$ such that $a_{c,w_0} \neq 0$. Because $\{Y^{I_m}_{\ua,\ua'}(w_0)\}_m$ is an exhaustion of $Z_{\ua,\ua'}(w_0)$ by pure open substacks, there exists $m$ such that $\iota_{m,w_0}^*(\sum_c a_{c,w_0}c) \neq 0 \in \text{H}_*(Y^{I_m}_{\ua,\ua'}(w_0))$. Put 
$$X=\bigcup_{w} Z_{\ua,\ua'}(\leq w) \subset Z_{\ua,\ua'}$$
where the union ranges over the finitely many elements $w$ for which there exists $c \in \B_w$ with $a_{c,w} \neq 0$. Note that by maximality of $w_0$, $Z_{\ua,\ua'}(w_0)$ is open in $X$. Denote by $f: X \to Z_{\ua,\ua'}$ the closed embedding and $h: Z_{\ua,\ua'}(w_0) \to X$ the open embedding. By construction there exists $b'=\sum_{c,w}a_{c,w}b_{c,w} \in \text{H}_*(X)$ such that $f_*(b')=\mathbf{b}$ and $h^*(b')=\sum_c a_{c,w_0}h^*(b_{c,w_0})=\sum_c a_{c,w_0}c$. Consider the diagram with cartesian squares
$$\begin{tikzcd} 
  Z_{\ua,\ua'}(w_0)\arrow[r,hook,"h"] &X \arrow[r,"f"] &Z_{\ua,\ua'}\\
  Y^{I_m}_{\ua,\ua'}(w_0) \arrow[r,hook,"h'"]\arrow[u,hook,"\iota_{m,w_0}"]      &Y^{I_m}_{\ua,\ua'}\cap X \arrow[u,hook,"\iota'_{m}"]\arrow[r,"f'"] &Y^{I_m}_{\ua,\ua'}\arrow[u,hook,"\iota_{m}"],
  \end{tikzcd}$$
Note that $X$ being a union of cells $Z_{\ua,\ua'}(w)$, the intersection $X \cap Y^{I_m}_{\ua,\ua'}$ is still cohomologically  pure. By base change, we have
\begin{equation}\label{eq:proff inject base change}
\iota_m^*f_*(b')=f'_*(\iota_m')^*(b')
\end{equation}
and by assumption on $m$,
$$(h')^*(\iota'_m)^*(b')=\iota_{m,w_0}^*h^*(b')=\iota_{m,w_0}^*\left(\sum_c a_{c,w_0}c\right) \neq 0.$$
In particular, $(\iota'_m)^*(b') \neq 0$. By purity of $X \cap Y^{I_m}_{\ua,\ua'}$, the map $f'_*$ is injective. From \eqref{eq:proff inject base change} we deduce that $\iota_m^*(\mathbf{b})=\iota_m^*f_*(b')\neq 0$ and hence $\mathbf{b} \neq 0$ as wanted.


\bigskip

\centerline{\textbf{Acknowledgements}}

\smallskip

 We are grateful to Benjamin Hennion, Ruslan Maksimau, Sasha Minets and Eric Vasserot for numerous discussions. Part of this work was carried out during several visits of the second author to the Laboratoire de Mathématiques d'Orsay as well as to the Simion Stoilow Institute of Mathematics (IMAR), funded in part by fellowship. The second author would like to thank these institutions for their support and hospitality. The second author also gratefully acknowledges the support of Tsinghua University through its Short-Term Overseas Study Program. This work was in part supported by the PNRR grant CF 44/14.11.2022 {'COHAs of smooth surfaces and applications'}.

\medskip

\appendix

\section{Proof of Lemma~\ref{qclem}}\label{App:qclem}
  It is enough to treat the case that X is irreducible and reduced. We only need to show that $X\cap Z_i=\varnothing$ for almost all $i\in \N$. Put $\overline{Z}_{\leq i}:=\bigcup_{j\leq i}\overline{Z_j}$. Thus $(X_i:=X\cap \overline{Z}_{\leq i})_i$ is a nested sequence of closed substacks of X such that $X=\bigcup_{i}X_i$. As $X$ is quasi-compact, there exists a finite atlas $\bigsqcup_k U_k/G_k \to X$ where each $U_k$ is a finite type scheme over $\C$ and $G_k$ is an algebraic group acting on $U_k$. By pullback to $U_k$, we get a nested sequence of closed subschemes $(X_i^k)_i$ such that $\bigcup_i X_i^k=U_k$. It suffices to show that  $(X_i^k)_i$ stabilizes for each $k$. 

  \noindent
  \textit{Claim:} Let $U$ be an irreducible finite type scheme over $\C$ and let $(V_i)_{i\in \N}$ be any countable family of strict closed subschemes. Then we have $U\neq \bigcup_i V_i$.
  \begin{proof} By Noether's normalization lemma \cite[Lemma 10.115.5]{stacks-project}, there exists a finite surjection map $f:U\to \A^d_{\C}$ where $d=\dim U$. Then each $f(V_i)$ is a strict closed subset in $\A_{\C}^d$. Passing to the analytification, we see that $\A_{\C}^{d,\text{an}}\setminus f(V_i)^{\text{an}}$ is a dense open subset. By Baire category theorem, it is a Baire space since it is locally compact. Thus $\bigcap_i (\A_{\C}^{d,\text{an}}\setminus f(V_i)^{\text{an}})$ is nonempty, and then $U\setminus \bigcup_i V_i$ is nonempty.
  \end{proof}
  
  As a consequence of the above claim, the sequence $(X_i^k)_i$ stabilizes. Indeed, the inclusions $X_i^k\subseteq U_k$ cannot all be strict and then $\dim X_i^k=\dim U_k$  for some k. Since $U_k$ is irreducible, we must have $X_i^k=X_i^{k+1}=\cdots$. Note that the atlas of $X$ is finite, there exists $n$ such that $X=\overline{Z}_{\leq n}$. As $X$ is irreducible, there exists $j\leq n$ such that $Z_j$ is dense in $X$. Let $X'$ be the Zariski closure of $X\setminus Z_j$ in $X$. Thus $X'$ is a closed substack in $X$ and $\dim X'<\dim X$ and $X=Z_j\cup X'$. Repeating previous arguments with each irreducible components (finitely many) of $X'$ and noting that dimension strictly decreases each time, the process stops after finite number of iterations, as $X$ is quasi-compact and then of finite Krull dimension.


\section{Convolution in Borel-Moore homology}\label{app:convolution product}

We provide here a sheaf-theoretic description of the convolution product used to define Schur algebras $A^\nu_\alpha$. This is surely well-known to experts but we could not locate a precise reference.

Let $Y_i$ and $X$ be Artin stacks for $i=1,2,3$, with proper morphism of stacks $q_i:Y_i\to X$. Assume that $X$ is smooth, and define $Z_{ij}:=Y_i\underset{X}{\times} Y_j$. Because of the cartesian square
$$\begin{tikzcd}
    Z_{12}\underset{Y_2}\times Z_{23}\arrow[d,"p_{13}"]\arrow[r] & Y_2\arrow[d,"q_2"]\\
    Z_{13}\arrow[r] &X
\end{tikzcd}$$
the projection $p_{13}: Z_{12} \underset{Y_2}{\times}Z_{23} \to Z_{13}$ is proper. We have a commutative diagram with cartesian square
$$\begin{tikzcd} 
  Z_{12}\underset{Y_2}{\times} Z_{23}\arrow[r,"a"] \arrow[d,"q_{123}"'] &Z_{12} \times Z_{23} \arrow[d,"q_{12} \times q_{23}"'] \arrow[rd,"\pi"] &\\
  X \arrow[r,"\Delta_{X}"] & X \times X \arrow[r,"\pi'"] & pt
  \end{tikzcd}\ ,$$
  where $\Delta_{X}$ is the diagonal embedding. Since the stack $X$ is smooth,  $\Delta_X$  is a morphism between smooth stacks. Hence $\Delta_X$ is a derived lci, and there is a purity transformation: 
  $$\Delta_{X}^* \to \Delta_{X}^![-2\dim(X)],$$
see  \cite[Remark 3.8]{khan2019}. This leads to a chain of morphism
\begin{equation}\label{eq:appC chain}
\begin{split}
\pi'_*(q_{12} \times q_{23})_*(\omega_{Z_{12}} \boxtimes \omega_{Z_{23}}) & \xrightarrow{}  \pi'_*(\Delta_{X})_*\Delta_{X}^*(q_{12} \times q_{23})_*(\omega_{Z_{12}} \boxtimes \omega_{Z_{23}})\\
&\xrightarrow{} \pi'_*(\Delta_{X})_*\Delta_{X}^!(q_{12} \times q_{23})_*(\omega_{Z_{12}} \boxtimes \omega_{Z_{23}})[-2\dim(X)]\\
&\xrightarrow{\sim} \pi'_*(\Delta_{X})_*(q_{123})_*a^!(\omega_{Z_{12}} \boxtimes \omega_{Z_{23}})[-2\dim(X)]\\
&=\pi_*a_*\omega_{Z_{12} \underset{Y_2}{\times}Z_{23}}[-2\dim(X)]\ .
\end{split}
\end{equation}

Taking global sections, we obtain a morphism (intersection with supports) : 
$$ \cap:\text{H}_{*}(Z_{12},\Q) \otimes \text{H}_*(Z_{23},\Q) \to \text{H}_*(Z_{12} \underset{Y_2}{\times} Z_{23},\Q)$$
which is of homological degree $-2\dim(X)$. The convolution product is now defined as follows:
$$ \text{H}_*(Z_{12},\Q) \otimes \text{H}_*(Z_{23},\Q) \to \text{H}_*(Z_{13},\Q), \qquad c\otimes c' \mapsto (p_{13})_*(c \cap c')\ .
$$

\section{Computation of a differential}

Let $\sigma,\lambda \in (\Z^2)^+, \gamma=\sigma+\lambda$ and let
$$\xymatrix{Coh_\sigma \times Coh_\lambda & \widetilde{Coh}_{\sigma,\lambda} \ar[r]^-{p_{\sigma,\lambda}} \ar[l]_-{q_{\sigma,\lambda}} & Coh_{\gamma}}$$
be the correspondence defined in Section~\ref{sec:def correspondence}. Let us denote by $E_\sigma,E_\lambda, E_\gamma$ the tautological sheaves on $\widetilde{Coh}_{\sigma,\lambda} \times \mathbb{P}^1$ pulled back from $Coh_\sigma, Coh_\lambda$ and $Coh_\gamma$. From the short exact sequence
$$0 \to E_\lambda \to E_\gamma \to E_\sigma\to 0$$
of coherent sheaves on $\widetilde{Coh}_{\sigma,\lambda} \times \mathbb{P}^1$ we get canonical morphisms
$$f:\text{R}Hom(E_\gamma,E_\gamma) \to \text{R}Hom(E_\lambda,E_\sigma), \qquad g:\text{R}Hom(E_\sigma,E_\lambda) \to \text{R}Hom(E_\gamma,E_\gamma)\ .$$
Finally, recall that $\mathbb{T}_{Coh_\nu}=\text{R}Hom(E_\nu,E_\nu)[1]$ for any $\nu$.

The aim of this appendix is to prove the following

\smallskip

\begin{lemma}\label{lem:app differential} The differential $dp_{\sigma,\lambda}$ fits in a distinguished triangle in $D^b_{coh}(\widetilde{Coh}_{\sigma,\lambda})$
$$\mathbb{T}_{\widetilde{Coh}_{\sigma,\lambda}} \xrightarrow{dp_{\sigma,\lambda}} \text{R}Hom(E_\gamma,E_\gamma)[1] \longrightarrow \text{R}Hom(E_\lambda,E_\sigma)[1]\xrightarrow{+1} \ .$$
\end{lemma}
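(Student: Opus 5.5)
We compute the tangent complex of $\widetilde{Coh}_{\sigma,\lambda}$ by deformation theory of the short exact sequence. The stack $\widetilde{Coh}_{\sigma,\lambda}$ parametrizes extensions $0 \to E_\lambda \to E_\gamma \to E_\sigma \to 0$. Equivalently, it parametrizes pairs $(E_\gamma, E_\lambda\hookrightarrow E_\gamma)$, i.e. it is a relative Quot stack over $Coh_\gamma$. The relative tangent complex of $p_{\sigma,\lambda}$ is therefore the deformation complex of a subsheaf inside a fixed sheaf, namely $\text{R}Hom(E_\lambda, E_\sigma)$, computed with $\text{R}Hom$ relative to the projection $\widetilde{Coh}_{\sigma,\lambda}\times\P^1 \to \widetilde{Coh}_{\sigma,\lambda}$. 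The lemma is then the transitivity triangle
$$\mathbb{T}_{p_{\sigma,\lambda}} \to \mathbb{T}_{\widetilde{Coh}_{\sigma,\lambda}} \xrightarrow{dp_{\sigma,\lambda}} p_{\sigma,\lambda}^*\mathbb{T}_{Coh_\gamma} \xrightarrow{+1},$$
rotated, combined with $\mathbb{T}_{Coh_\gamma}=\text{R}Hom(E_\gamma,E_\gamma)[1]$. Rotating gives $\mathbb{T}_{\widetilde{Coh}} \to \text{R}Hom(E_\gamma,E_\gamma)[1] \to \mathbb{T}_{p}[1] \xrightarrow{+1}$. For this to match the statement we need $\mathbb{T}_p[1]\simeq \text{R}Hom(E_\lambda,E_\sigma)[1]$, i.e. $\mathbb{T}_p \simeq \text{R}Hom(E_\lambda,E_\sigma)$.

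The steps are as follows.

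\textbf{Step 1: the relative tangent complex.} Identify $\mathbb{T}_{p}$ with $\text{R}Hom(E_\lambda,E_\sigma)$. The fibre of $p$ over a point $F$ is $\text{Quot}(F)$ of the appropriate class. Grothendieck's theory of the Quot scheme gives the tangent space $\Hom(E_\lambda,E_\sigma)$ and obstructions in $\text{Ext}^1(E_\lambda,E_\sigma)$, and this holds in families. Since $\P^1$ is a curve, $\text{R}Hom(E_\lambda,E_\sigma)$ is perfect of amplitude $[0,1]$. To make this precise in the derived sense, we realize the Quot stack as a derived mapping stack (derived Quot) and compute its cotangent complex via the standard formula for $\text{Map}$-type stacks. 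Alternatively, we can use that $\widetilde{Coh}_{\sigma,\lambda}$ is classically smooth (homological dimension one) and check quasi-isomorphism on $H^0,H^1$.

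\textbf{Step 2: identify the connecting map.} Check that the map $\text{R}Hom(E_\gamma,E_\gamma)[1]\to \text{R}Hom(E_\lambda,E_\sigma)[1]$ appearing in the triangle is $f[1]$, the composite of restricting along $E_\lambda\to E_\gamma$ and projecting along $E_\gamma\to E_\sigma$. Infinitesimally, a deformation of $E_\gamma$ induces a deformation of the subobject via this composite: the obstruction to lifting $E_\lambda\subset E_\gamma$ is the image of the class in $\text{Ext}^1(E_\gamma,E_\gamma)$ in $\text{Ext}^1(E_\lambda,E_\sigma)$. This is the Kodaira–Spencer compatibility. Similarly, check that $dp$ is the natural map.

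\textbf{Step 3: a global alternative.} As a cross-check, consider the other projection. From $\mathbb{T}_{\widetilde{Coh}}$ being $\text{R}Hom$ of the filtered object, i.e. the filtered endomorphism complex $\text{R}Hom^{fil}(E_\gamma,E_\gamma)[1]$, the cofibre of $\text{R}Hom^{fil}\to \text{R}Hom$ is $\text{R}Hom(E_\lambda,E_\sigma)$. This follows from the three-step filtration of $\text{R}Hom(E_\gamma,E_\gamma)$ with graded pieces $\text{R}Hom(E_\sigma,E_\lambda)$, $\text{R}Hom(E_\sigma,E_\sigma)\oplus\text{R}Hom(E_\lambda,E_\lambda)$, and $\text{R}Hom(E_\lambda,E_\sigma)$. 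This route is the cleanest and likely the one I would write.

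\textbf{Main obstacle.} The main obstacle is Step 1 at the derived/stacky level: justifying the identification $\mathbb{T}_{\widetilde{Coh}}\simeq \text{R}Hom^{fil}(E_\gamma,E_\gamma)[1]$ on an Artin stack only locally of finite type. I would handle this locally on quasi-compact opens, which are global quotients by Lemma~\ref{LemOpenness}, using the standard deformation theory of flags of sheaves.
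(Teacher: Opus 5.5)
Your proposal is correct in outline and contains two arguments. Step 3 is essentially the paper's proof, while Steps 1--2 are a genuinely different route.

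\textbf{Step 3 versus the paper.} The paper never identifies $\mathbb{T}_{\widetilde{Coh}_{\sigma,\lambda}}$ with the filtered endomorphism complex. Instead it uses the other leg of the correspondence.
\begin{itemize}
\item The description $\widetilde{Coh}_{\sigma,\lambda}=\text{Tot}(\text{R}Hom(E_\sigma,E_\lambda)[1])$ over $Coh_\sigma\times Coh_\lambda$ gives a triangle $\text{R}Hom(E_\sigma,E_\lambda)[1]\to\mathbb{T}_{\widetilde{Coh}_{\sigma,\lambda}}\to\mathbb{T}_{Coh_\sigma\times Coh_\lambda}$.
\item The composite of its first arrow with $dp_{\sigma,\lambda}$ is the canonical map $g$.
\item The octahedral axiom applied to this factorization produces a triangle $\mathbb{T}_{Coh_\sigma\times Coh_\lambda}\to\mathrm{cone}(g)\to\mathrm{cone}(dp_{\sigma,\lambda})$.
\item By exactly your three-step filtration, $\mathrm{cone}(g)$ is an extension of $\text{R}Hom(E_\lambda,E_\sigma)[1]$ by $\mathbb{T}_{Coh_\sigma\times Coh_\lambda}$. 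Hence $\mathrm{cone}(dp_{\sigma,\lambda})\simeq\text{R}Hom(E_\lambda,E_\sigma)[1]$.
\end{itemize}
So the ``main obstacle'' you flag largely dissolves. The paper needs only two inputs. The first is the vector-bundle-stack description of $q_{\sigma,\lambda}$, which is where homological dimension one enters and which the paper uses throughout. The second is that the octahedral map $\mathbb{T}_{Coh_\sigma\times Coh_\lambda}\to\mathrm{cone}(g)$ is the inclusion of the middle graded piece; the paper uses this implicitly in its last sentence. It does not need a full quasi-isomorphism $\mathbb{T}_{\widetilde{Coh}_{\sigma,\lambda}}\simeq\text{R}Hom^{fil}(E_\gamma,E_\gamma)[1]$.

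\textbf{Steps 1--2.} Your transitivity-triangle route is more direct and conceptual. The cone of $dp_{\sigma,\lambda}$ is $\mathbb{T}_{p_{\sigma,\lambda}}[1]$ by definition, and the second arrow is naturally the Kodaira--Spencer map $f[1]$. The price is that everything rests on $\mathbb{T}_{p_{\sigma,\lambda}}\simeq\text{R}Hom(E_\lambda,E_\sigma)$ in families.

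Your fallback of checking $H^0$ and $H^1$ is not a proof by itself. Two-term complexes with isomorphic cohomology sheaves need not be quasi-isomorphic. You first need an actual comparison morphism. One option is the relative obstruction theory $\mathbb{T}_{p_{\sigma,\lambda}}\to\text{R}Hom(E_\lambda,E_\sigma)$ of the Quot stack, which is bijective on tangent spaces and injective on obstruction spaces. Another is the derived Quot identification. Once you have such a morphism, the rank equality $\chi(E_\lambda,E_\sigma)=\langle\lambda,\sigma\rangle=\dim\widetilde{Coh}_{\sigma,\lambda}-\dim Coh_\gamma$ forces it to be a quasi-isomorphism.
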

\begin{proof}
We have
$$\widetilde{Coh}_{\sigma,\lambda}=\text{Tot}\left(\text{R}Hom(E_\sigma,E_\lambda)[1]\right)=\text{SpecSym}_{Coh_\sigma \times Coh_\lambda}\left(\text{R}Hom(E_\sigma,E_\lambda)[1]\right)^\vee$$
and hence also a distinguished triangle
$$\text{R}Hom(E_\sigma,E_\lambda)[1] \xrightarrow{} \mathbb{T}_{\widetilde{Coh}_{\sigma,\lambda}} \xrightarrow{}\mathbb{T}_{Coh_\sigma \times Coh_\lambda} \xrightarrow{+1}\ .$$
The commutative square
$$
\begin{tikzcd}[column sep=large]
   \mathbb{T}_{\widetilde{Coh}_{\sigma,\lambda}} \arrow[r,"dp_{\sigma,\lambda}"] & \text{R}Hom(E_\gamma,E_\gamma)[1]\\
\text{R}Hom(E_\sigma,E_\lambda)[1] \arrow[u] \arrow[r,"\text{Id}"] & \text{R}Hom(E_\sigma,E_\lambda)[1] \arrow[u,"g"]
\end{tikzcd}
$$
gives rise, via the  octahedral axiom of triangulated category, to a commutative diagram with distinguished triangles:
$$
\begin{tikzcd}[column sep=large]
 ~ &~ \\
\substack{\text{R}Hom(E_\sigma,E_\sigma)[1]\\ \oplus \text{R}Hom(E_\lambda,E_\lambda)[1]} \arrow[r]\arrow[u,"+1"]& N' \arrow[r]\arrow[u,"+1"] & N\arrow[r,"+1"] &~\\
   \mathbb{T}_{\widetilde{Coh}_{\sigma,\lambda}} \arrow[u,"dq_{\sigma,\lambda}"] \arrow[r,"dp_{\sigma,\lambda}"] & \text{R}Hom(E_\gamma,E_\gamma)[1] \arrow[u] \arrow[r,"f'"] & N \arrow[u,"\text{Id}"]\arrow[r,"+1"] &~\\
\text{R}Hom(E_\sigma,E_\lambda)[1] \arrow[u] \arrow[r,"\text{Id}"] & \text{R}Hom(E_\sigma,E_\lambda)[1] \arrow[u,"g"] 
\end{tikzcd}
$$

From the middle column and top row we deduce that $N \simeq \text{R}Hom(E_\lambda,E_\sigma)[1]$, with $f'=f$ being the canonical map.
\end{proof}

\bibliographystyle{plain}

\end{document}